\renewcommand{\epsilon}{\varepsilon}
\renewcommand{\setminus}{\smallsetminus}
\let\inf\relax \DeclareMathOperator*\inf{\vphantom{p}inf}
\theoremstyle{plain}
\newtheorem{theorem}{Theorem}
\newtheorem*{theorem*}{Theorem}
\newtheorem{corollary}{Corollary}[section]
\newtheorem{lemma}[corollary]{Lemma}
\newtheorem{proposition}[corollary]{Proposition}
\theoremstyle{definition}
\newtheorem*{definition}{Definition}
\theoremstyle{remark}
\newtheorem*{remark}{Remark}
\newcommand{\Z}{{\mathds Z}}
\newcommand{\N}{{\mathds N}} 
\newcommand{\E}{{\mathds E}}
\newcommand{\R}{{\mathds R}}
\newcommand{\C}{{\mathds C}}
\newcommand{\Disk}{{\mathds D}}
\newcommand{\D}{\Disk}
\newcommand{\bd}{\partial}
\newcommand{\diam}{{\rm diam}}
\newcommand{\area}{{\rm area}}
\renewcommand{\Re}{{\rm Re}}
\renewcommand{\Im}{{\rm Im}}
\newcommand{\ceq}{\;{\mathrel{\mathop:}=}\;}
\newcounter{todos}
\begin{document}

\title[Discrete analytic functions on lattices without global geometric control]{Discrete analytic functions on non-uniform lattices without global geometric control}
\author{Brent M. Werness}
\begin{abstract}
Recent advances in the study of conformally invariant discrete random processes have lead to increasing interest in the study of discrete analogues to holomorphic functions.  Of particular interest are results which provide conditions under which these discrete functions can be shown to converge to continuum versions as the lattice spacing shrinks to zero.  Recent work by Skopenkov \cite{Skopenkov} has extended these results to include a wide class of non-uniform quadrilateral lattices with a pair of regularity conditions, one local and one global.  

Such a result is sufficient for the study of random processes on deterministic lattices, however to establish convergence results for conformally invariant random processes on random triangulations, such a global regularity condition cannot be assumed.  In this paper we provide a convergence result on quadrilateral lattices upon which we enforce only a local condition on the geometry of each face.
\thanks{This project was partially funded by NSF grant DMS-1304163.}
\end{abstract}
\maketitle

\section{Introduction}
Discrete complex analysis has a long history dating back to the work of \cite{Isaacs,Ferrand,Duffin}, and continuing to the present day through the work of \cite{Mercat1,ChelSmir,SmirDisc}.  In recent years, there has been added interest in the field from the point of view of conformally invariant discrete random processes, where discrete complex analysis has proven quite useful in many attempts to prove convergence results of these processes towards their continuum analogues (Schramm--Loewner Evolutions) or in greatly sharpening the understanding of discrete observables of that model \cite{SmirDisc,conflatt,connective,ising,perc,kenyon1,kenyon2,kenyon3,LERW,IsingSpin}.

From this point of view, the most useful form of discrete complex analysis that has emerged has been the approach focused on rhombic lattices or on their duals, isoradial graphs, which are a collection of convex polygons in which every face can be inscribed in a circle \cite{Kenyon4,SmirDisc,ChelSmir,ising}.  In this setting, there is a robust set of theorems which ensure that limits of discrete analytic functions are analytic \cite{Cour,ChelSmir}.

Recent work by Skopenkov has extended these results by providing a broad set of conditions under which sequences of discrete harmonic or holomorphic functions on a sequence of lattices converges to a harmonic function in the continuous domain \cite{Skopenkov}.  Rather than work with rhombic lattices, Skopenkov works in the greater generality of orthogonal lattices, which are collections of quadrilaterals where the diagonals of each face are orthogonal to each other.  We will discuss the specific conditions for convergence of such functions in greater detail in Section~\ref{ExactSec}, however, roughly speaking, Skopenkov proved that if you have a sequence of lattices which converge to the continuous domain and there is uniform control on the geometry of each face \emph{and} uniform global control on the density of vertices of your quadrangulation, then you may conclude convergence of the appropriate discrete harmonic functions to their continuum counterparts. 

In this paper, we provide an alternative set of conditions which ensure convergence.  Following Skopenkov, we will still work in the setting of orthogonal lattices.  However, we will differ in our conditions by requiring only a single \emph{local} control on the regularity of the quadrilateral lattice, without any need for uniform global control on the density of vertices.  Slightly informally stated, our main result is as follows.
  
\begin{theorem*}
	Let $\Omega \subseteq \C$ be a bounded simply-connected domain.  Let $g:\C \rightarrow \R$ be smooth boundary values.  Let $Q_n$ be a sequence quadrilateral lattices with orthogonal diagonals and uniformly bounded angles and ratios amongst the edge an diagonal lengths converging to $\Omega$.  Then the solutions to the discrete Dirichlet problem with boundary values $g$ converge to the solution to the Dirichlet problem in $\Omega$ with boundary values $g$.
\end{theorem*}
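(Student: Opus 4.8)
The plan is to run the classical three-step argument for convergence theorems of this type---first an a priori compactness estimate, then extraction of a subsequential limit together with the identification of that limit as a harmonic function, and finally control of the boundary values---keeping in mind that the content of the theorem lies entirely in being able to carry out the first step using only the per-face geometric hypotheses, with no global control on the density of vertices.

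Two observations are essentially free. Since each $u_n$ solves a discrete Dirichlet problem, the discrete maximum principle gives the uniform bound $\|u_n\|_\infty\le\|g\|_{L^\infty(\partial\Omega)}$, and $u_n$ is the minimiser of the discrete Dirichlet energy $\seq(f)=\sum_{e=(v,w)}c_e\,|f(w)-f(v)|^2$ among functions with the prescribed boundary values, where the conductance of the edge $e$ of $Q_n$ (a diagonal of a quadrilateral face) is the ratio of the lengths of the two diagonals of that face. The local hypotheses---bounded angles and bounded ratios amongst the edge and diagonal lengths of each face---force every $c_e$ to lie in a fixed interval $[\lambda,\Lambda]$, so the relevant operator is uniformly elliptic even though the mesh is wildly non-uniform; the same hypotheses bound $\seq(u_n)$ uniformly, by comparing with the restriction of $g$ to the vertices of $Q_n$ and using that $|e|\,|e^*|$ is comparable to the area of the corresponding face, which yields $\seq(u_n)\lesssim\|\nabla g\|_\infty^2\,\area(\Omega)$.

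The heart of the matter is to upgrade the uniform $L^\infty$ bound to a uniform modulus of continuity on every compact $K\Subset\Omega$. For this I would prove a scale-invariant oscillation-decay estimate for discrete harmonic functions of a uniformly elliptic conductance operator carried by an orthogonal lattice: working over dyadic annuli around a point one combines a Caccioppoli (reverse-Poincar\'e) inequality, a discrete Poincar\'e inequality, and a crossing argument---equivalently, a coupling argument for the associated random walk---using the local control to exclude degenerate faces and to guarantee volume doubling and a Poincar\'e inequality for the counting measure weighted by cell areas, so that the oscillation of such a function over a disk of radius $r$ is at most $C(r/R)^\alpha$ times its oscillation over a concentric disk of radius $R$, with $C$ and $\alpha$ depending only on the local regularity parameters. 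This is the step I expect to be the genuine obstacle: the De Giorgi--Nash--Moser machinery that delivers such estimates on uniform lattices must be reorganised so that every constant is read off from the per-face data and none of them tacitly uses a uniform lower bound on the mesh or a uniform bound on the vertex density.

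Granting the equicontinuity estimate, Arzel\`a--Ascoli produces a subsequence along which $u_{n_k}\to u$ locally uniformly in $\Omega$, with $u\in C(\Omega)$. To see that $u$ is harmonic I would test discrete harmonicity against a smooth $\varphi$ compactly supported in $\Omega$: discrete summation by parts converts $\sum_v\varphi(v)\,\Delta_n u_n(v)=0$ into $\sum_v u_n(v)\,\Delta_n\varphi(v)=0$, and the consistency of the discrete Laplacian on smooth functions---itself a purely local statement, valid under the per-face hypotheses as the mesh tends to $0$---lets one pass to the limit to get $\int_\Omega u\,\Delta\varphi=0$, so $u$ is harmonic by Weyl's lemma. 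For the boundary values, note that $\Omega$ bounded and simply connected makes $\C\setminus\Omega$ a connected unbounded set, so every $\xi\in\partial\Omega$ is a regular boundary point and admits a continuum barrier; discretising that barrier---or building one directly from $\dist(\cdot,\C\setminus\Omega)$---, checking via consistency that its discrete Laplacian has the correct sign up to a harmless error, and applying the discrete maximum principle bounds $|u_n(v)-g(\xi)|$ by a quantity tending to $0$ as $v\to\xi$ uniformly in $n$. Hence every subsequential limit is continuous on $\overline\Omega$, harmonic inside, and equal to $g$ on $\partial\Omega$; by uniqueness of the solution to the continuum Dirichlet problem all subsequential limits coincide with it, and therefore the full sequence $u_n$ converges to the solution of the Dirichlet problem in $\Omega$ with boundary values $g$.
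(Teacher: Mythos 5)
Your outline has the right skeleton (uniform bound, equicontinuity, Arzel\`a--Ascoli, Weyl's lemma, identification of boundary values, uniqueness), and your preliminary observations --- uniform ellipticity of the conductances under the per-face hypotheses and the uniform energy bound via comparison with $g|_{Q_n^*}$ --- match the paper. But the two places where you defer the work are exactly where the content of the theorem lives, and in both places the step you propose would not go through as stated. For equicontinuity you propose a De Giorgi--Nash--Moser oscillation-decay scheme resting on volume doubling and a discrete Poincar\'e inequality; you correctly flag this as the obstacle, but you give no reason these inequalities hold with constants depending only on the per-face data, and the paper does not attempt this. Instead it runs an elementary length--area argument: Lemma~\ref{SemiEnergyLem} extracts, from the chain of black diagonals of the faces crossing a circle $\bd B_r$, a path whose semi-energy is at least $(u(z)-u(w))^2/\ell$, the key geometric input being Proposition~\ref{DiamProp} (the sum of diameters of faces meeting a curve is $\le C_K$ times its length, proved via the neighborhood Lemma~\ref{IntLem}); integrating over $r$ gives the logarithmic modulus of continuity of Proposition~\ref{EquiProp}. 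Crucially that estimate carries the boundary term $\max_{z',w'\in\bd Q\cap B_R}|u(z')-u(w')|$, so it is an equicontinuity statement up to the boundary; combined with the smoothness of $g$ on all of $\C$ and the Hausdorff convergence of $\bd Q_n$ to $\bd\Omega$, this disposes of the boundary values with no barriers at all.

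The second gap is your treatment of consistency. You call the consistency of the discrete Laplacian on smooth functions ``a purely local statement,'' and your barrier argument needs it pointwise (a sign on $\Delta_{Q_n}\phi(v)$ at each vertex). On these lattices pointwise consistency fails: from Lemma~\ref{LapExpressLem} one only gets $|\Delta_Q\phi(v)|\le C\cdot(\text{local area})\cdot\max|D^2\phi|$, an error of the \emph{same order} as the target quantity $\Delta\phi(v)\,dx\,dy$, so one cannot read off a sign, and a discretized continuum barrier need not be discretely superharmonic. The paper's Proposition~\ref{LapProp} is an \emph{averaged} consistency statement over mesoscopic squares $R$ of side $r>M(Q)$: it is proved by expanding $g$ into $1,\Re z,\Im z,\Re z^2,\Im z^2,|z|^2$ plus a $C^3$ remainder, computing the quadratic terms exactly via the auxiliary points $z',z''$ of Lemma~\ref{OtherAreaLem} so that interior contributions telescope and only an $O(M(Q)r)$ boundary layer survives (again controlled by Proposition~\ref{DiamProp}). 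Proposition~\ref{HarmConvProp} then inserts an intermediate lattice of scale $\sqrt{2M(Q_n)}$ between $Q_n$ and $\Omega$ to convert Green's identity (Lemma~\ref{GreenLem}) into the hypothesis of Weyl's lemma. So while your step (2) is architecturally the same as the paper's, the averaged consistency estimate and the mesoscopic partition are a necessary idea you are missing, and your step (3) should be replaced by the boundary term already present in the equicontinuity estimate rather than by barriers.
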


Finding generalizations of these convergence results to more general lattices has been a question of interest, and was stated as an open question by Smirnov \cite{SmirDisc}.  However, aside from this general interest in attempting to find convergence results in the most general possible setting, there is a specific probabilistic motivation for examining this particular direction generalization.  In recent years, there has been significant progress in understanding the structure of large random planar maps, from the metric space view \cite{BuziosNotes,LeGallRev}, from the point of view of the conjectured conformal structure, and through the couplings of conformally invariant statistical physics models with these surfaces \cite{HCSheff,CWSheff,QLE,KPZ,TreeMate1,TreeMate2,TreeMate3}.  The most direct route towards proving rigorous results on the convergence of discrete models on random planar maps to Schramm--Loewner Evolutions requires results on the convergence of discrete analytic functions flexible enough to handle the highly irregular lattices that occur when considering natural embeddings of planar maps.  These results are in this direction, and indeed we will prove in Section~\ref{MotSec} that there is a reasonable model of random planar maps to which our results may be applied, under the assumption that the diameter of the largest circle in the circle-packings of these surfaces tends to zero.

\subsection{Comparison to previous results}\label{ExactSec}

We will work entirely with \emph{quadrilateral lattices}, which is to say a planar graph $Q$ with a given straight-line embedding into $\C$ whose vertices are identified with a set $Q^* \subset \C$ such that every bounded face of $Q$ is a quadrilateral.  We will assume all faces are non-degenerate in the sense that the four edges are all distinct.  We freely allow non-convex quadrilaterals.  Throughout this paper, we will always assume that the quadrilaterals are \emph{orthogonal}, which is to say that the lines through the diagonals meet at a right angle.

Orthogonal lattices may be easily found both algorithmically and by hand.  To help emphasize this point, all lattices included in all figures will be exactly orthogonal.

\begin{figure}[ht!]
	\includegraphics{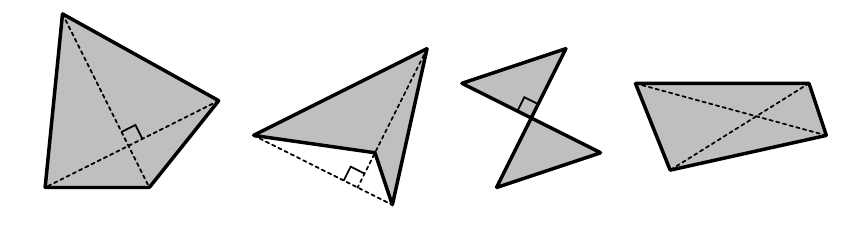}
	\caption{The first two quadrilaterals are admitted in our lattices as all edges are disjoint and the diagonals are orthogonal.  The third is not allowed since the edges are not disjoint (although the diagonals are orthogonal). The fourth is also not allowed since the diagonals are not orthogonal.}
	\label{AllowableFig}
\end{figure}

Since every bounded face of $Q$ is a quadrilateral, we know that the graph is bipartite and thus admits a two-coloring as black and white vertices.  Moreover, such a two-coloring is unique up to the interchange of black and white vertices (see Figure~\ref{TwoColorFig}).  Throughout this paper, the black vertices will be denoted by $Q^\bullet$, and the white vertices by $Q^\circ$.  Thus we have that the set of all vertices may be written $Q^* = Q^\bullet \sqcup Q^\circ$.  

\begin{figure}[ht!]
	\labellist
	\pinlabel $v_0$ [l] at 115 45
	\pinlabel $v_0$ [l] at 185 45
	\pinlabel $v_0$ [l] at 255 45
	\pinlabel $v_0$ [l] at 355 45
	\pinlabel $\longrightarrow$ [c] at 300 40
	\endlabellist
	\includegraphics{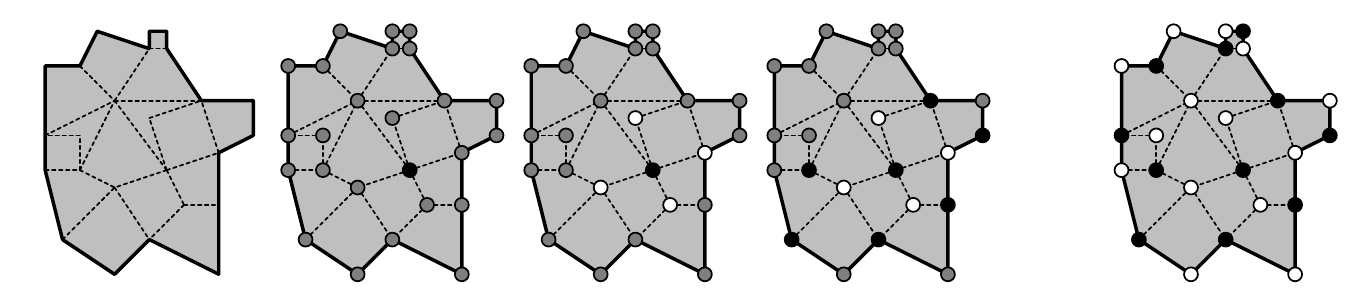}
	\caption{Any quadrilateral lattice can be uniquely two-colored (up to exchange of the two colors) since they are bipartite.  This can be seen by coloring an initial vertex $v_0$ black, coloring the neighbors of that vertex white, and the iterating the process for subsequent neighborhoods.  The faces being quadrilaterals provides a form of integrability condition ensuring that the lengths of all paths from $v_0$ to any target vertex are of the same parity, and thus the coloring is unique.}
	\label{TwoColorFig}
\end{figure}

We now provide the notion of convergence for a sequence of such quadrilateral lattices to a domain in $\C$.  We first review the Hausdorff distance between sets in the plane. 

\begin{definition}
	Given two set $X$ and $Y$ of a metric space, the \emph{Hausdorff distance} between them is the infimum over all $\epsilon > 0$ so that $X$ is contained in the $\epsilon$-neighborhood of $Y$ and $Y$ is contained in the $\epsilon$-neighborhood of $X$, or more formally written: 
	\[
	\mathrm{d}_{\mathrm{Haus}}(X,Y) = \max\bigl\{\sup_{x \in X} \inf_{y \in Y} d(x,y),\; \sup_{y \in Y} \inf_{x \in X} d(x,y)\bigr\}.
	\]
\end{definition}

Let $M(Q) \ceq \sup_{z \sim w} |z-w|$ denote the maximal edge length of a lattice $Q$.  We may now provide a precise definition of what is meant by approximation by quadrilateral lattices, which is a mild restatement of the definition given in \cite{Skopenkov}.

\begin{definition}\label{ApproxDef}
	Given a bounded simply-connected domain $\Omega$, we will say that a sequence of quadrilateral lattices $\{Q_n\}$ \emph{approximates} $\Omega$ if the following hold:
	\begin{itemize}
		\item $M(Q_n) \rightarrow 0$ as $n \rightarrow 0$, and
		\item $\mathrm{d}_{\mathrm{Haus}}(\bd Q_n, \bd \Omega) \rightarrow 0$ as $n \rightarrow 0$.
	\end{itemize} 
\end{definition}

This provides the conditions needed to say that a sequence of quadrilateral lattice approximates the shape of the domain.  This provides no control over the local geometry of the quadrilateral lattices.  If we want holomorphic or harmonic functions defined on these discrete structures to converge to their continuum analogues (defined and discussed rigorously in Section~\ref{DiscHoloSec}), it is natural to add additional conditions on the geometric regularity of these lattices. 

In \cite{Skopenkov}, the following conditions were assumed.

\begin{definition}\label{SkopLocalDef}
	We will say that a quadrilateral lattice is \emph{uniform non-degenerate} with constant $C>0$ if it satisfies the following two conditions:
	\begin{description}
		\item[Non-degenerate] The ratio of the lengths of the diagonals of every quadrilateral face is bounded above by $C$ (and thus below by $C^{-1}$) and the angle between the diagonals is bounded below by $C^{-1}$.
		\item[Uniform] The number of vertices within any ball of radius $M(Q)$ is less than $C$.
	\end{description}
\end{definition}

Under these regularity conditions on the approximating lattices, Skopenkov proved that the solutions to the discrete Dirichlet problem converge to those of the continuous problem.

\begin{theorem}[Skopenkov \cite{Skopenkov}]\label{SkopTheorem}
	Let $\Omega \subseteq \C$ be a bounded simply-connected domain.  Let $g:\C \rightarrow \R$ be a smooth function.  Let $Q_n$ be a sequence of orthogonal uniform non-degenerate lattices with constant $C$ approximating a domain $\Omega$.  Then the solution $u_{Q_n,g}:Q_n^* \rightarrow \R$ of the Dirichlet problem on $Q_n$ uniformly converges to the solution $u_{\Omega,g}$ of the Dirichlet problem on $\Omega$.
\end{theorem}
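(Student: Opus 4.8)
The plan is to follow the standard three-step scheme for proving convergence of discrete harmonic functions to their continuum counterpart --- \emph{equicontinuity}, \emph{compactness}, and \emph{identification of the limit} --- but to replace the global regularity input used by Skopenkov (the \textbf{Uniform} condition controlling the vertex density) with purely local estimates. Concretely, for a sequence of orthogonal lattices $Q_n$ approximating $\Omega$ and satisfying only the \textbf{Non-degenerate} condition with a uniform constant $C$, I would first extend the discrete solutions $u_{Q_n,g}$ to continuous functions on $\overline{\Omega}$ (say, by affine interpolation on a triangulation obtained by adding the diagonals of each quadrilateral face, or by the piecewise-linear extension natural to the finite-element interpretation of the discrete Laplacian on an orthogonal lattice). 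The goal is then to show this family is precompact in $C(\overline{\Omega})$ and that every subsequential limit solves the Dirichlet problem with boundary data $g$; uniqueness of the continuum solution then upgrades subsequential convergence to full convergence.

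The heart of the argument, and where the absence of global control must be circumvented, is an \emph{a priori} interior regularity estimate for discrete harmonic functions that depends only on $C$ and not on the vertex density. I would obtain this through the variational / Dirichlet-energy formulation: on an orthogonal quadrilateral lattice the discrete Laplacian is exactly the finite-element Laplacian whose associated bilinear form is $\sum_{\text{faces}} (\text{cotangent-type weights}) |u(z) - u(w)|^2$, and the \textbf{Non-degenerate} condition bounds these weights above and below in a scale-invariant way. This gives a uniform bound on the Dirichlet energy of $u_{Q_n,g}$ (via the maximum principle for the $L^\infty$ bound plus an energy comparison against a fixed smooth extension of $g$), and then one runs a discrete Caccioppoli / De Giorgi--Nash--Moser type iteration, or more simply a discrete Cacciopoli inequality combined with a discrete Sobolev inequality, to deduce interior Hölder equicontinuity with constants depending only on $C$ and the distance to $\bd\Omega$. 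The key point is that all of these inequalities are \emph{scale invariant} and \emph{local}, so no hypothesis on how many vertices sit in a ball is needed --- the combinatorial complexity of the lattice inside a region is irrelevant once the face geometry is controlled.

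Boundary behavior requires a separate and somewhat delicate argument: one needs that the extended solutions are equicontinuous up to $\bd\Omega$ and attain the boundary values $g$ in the limit. Here I would use a barrier argument, constructing for each boundary point a discrete superharmonic and subharmonic barrier from the smoothness of $g$ and the simple-connectedness of $\Omega$ (a disk or a wedge at each boundary point suffices, via $\mathrm{d}_{\mathrm{Haus}}(\bd Q_n, \bd\Omega) \to 0$), again with estimates uniform in $n$ thanks only to the local \textbf{Non-degenerate} bound. Once equicontinuity on all of $\overline{\Omega}$ is in hand, Arzel\`a--Ascoli produces a uniformly convergent subsequence; to identify the limit $u$ as harmonic I would test against smooth compactly supported $\varphi$, using that the discrete Laplacian converges to the continuum Laplacian in the weak sense on orthogonal lattices (a consistency computation, Taylor-expanding $\varphi$ on each face and using orthogonality of the diagonals to kill the first-order error terms, exactly as in the isoradial case), so that $\int_\Omega u \, \Delta\varphi = 0$; elliptic regularity then gives $u$ harmonic, and the boundary estimates give $u = g$ on $\bd\Omega$.

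The step I expect to be the main obstacle is the uniform interior equicontinuity estimate \emph{in the absence of the uniform density bound}: without it one cannot quote Skopenkov's estimates as a black box, and one must verify that the discrete Harnack inequality / oscillation-decay estimate genuinely survives with constants depending only on the face-geometry constant $C$. In particular, making the discrete Sobolev/Poincar\'e inequalities work on a lattice whose vertices may accumulate arbitrarily densely inside a fixed ball requires the energy-based (rather than pointwise-lattice-combinatorial) viewpoint throughout, and care is needed that the interpolation from discrete functions to continuum functions does not itself introduce energy blow-up --- which is precisely where the lower bound on the cotangent weights, and hence on the angle between the diagonals, is essential. A secondary technical point is ensuring the boundary barriers can be built purely locally; here the fact that $\Omega$ is simply connected and that $\bd Q_n \to \bd\Omega$ in Hausdorff distance should be enough to place a touching disk or accessible wedge at each boundary point, but the quantitative barrier estimate again must be traced to depend on $C$ alone.
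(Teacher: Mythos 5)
This statement is quoted from \cite{Skopenkov} and is not proved in the paper at all; the paper's own contribution (Theorem~\ref{MainTheorem}) replaces Skopenkov's hypotheses with a different, purely local one. That said, your overall architecture --- $L^\infty$ bound via the maximum principle, energy bound against a smooth extension of $g$, interior equicontinuity, Arzel\`a--Ascoli, and identification of the limit through Weyl's lemma by testing the discrete Laplacian against smooth compactly supported functions --- matches both Skopenkov's argument and the proof of Theorem~\ref{FullTheorem} here, so at that level the plan is sound.

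The genuine gap is that you discard the \textbf{Uniform} hypothesis and claim the \textbf{Non-degenerate} condition alone yields interior equicontinuity with constants depending only on $C$. That cannot be asserted and passed over: it is essentially the open question this paper was written to address, and the paper resolves it only by imposing a \emph{different} local condition ($K$-roundness, bounding all interior angles and all edge ratios), which is \emph{not} implied by non-degeneracy. Non-degeneracy controls only the ratio of the two diagonals and the angle between them; Figure~\ref{NotKFig} exhibits uniformly non-degenerate quadrilaterals with an edge of length $\epsilon \to 0$, i.e.\ arbitrarily bad roundness. For such faces the geometric input behind any oscillation-decay argument fails: one cannot bound $\sum_{f \cap \gamma \neq \emptyset} \diam(f)$ by $C\,\ell(\gamma)$ (the analogue of Proposition~\ref{DiamProp}), nor guarantee that a chain of black diagonals tracking a circle of radius $r$ has Euclidean length $O(r)$, because a curve may enter and leave a face of large diameter through a corner of size $\epsilon$. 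Your proposed discrete Caccioppoli/Sobolev or De Giorgi--Nash--Moser route needs exactly such a comparison between graph-metric and Euclidean-metric balls, and neither non-degeneracy alone nor the energy formulation supplies it; you flag this yourself as ``the main obstacle'' but offer no mechanism to overcome it. To prove the theorem \emph{as stated} you should simply invoke the Uniform hypothesis where it is needed: the bound on the number of vertices in any ball of radius $M(Q)$ immediately yields the $\sum \diam(f) \lesssim r$ estimates on circles and makes the equicontinuity step go through, which is how Skopenkov proceeds. A secondary, minor point: the boundary barrier construction is unnecessary here, since $g$ is smooth on all of $\C$ and the boundary oscillation can be absorbed directly into the equicontinuity estimate (as in Proposition~\ref{EquiProp}), with the maximum principle handling the rest.
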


Let us examine the assumptions more closely.  The condition of non-degeneracy provides local control over the geometry of the quadrilateral lattice.  Such a condition seems likely necessary to assume if one wishes to prove a theorem of this form.  The condition of uniformity is however a very strong condition which excludes many reasonable lattices such as those in Figure~\ref{MyLatticesFig}.  We will provide a result allowing for removal of this condition as asked in \cite{Skopenkov}.  In particular, we will replace the two conditions of uniformity and non-degeneracy from \cite{Skopenkov} with the following single local condition.

\begin{figure}[ht!]
	\includegraphics[width=4in]{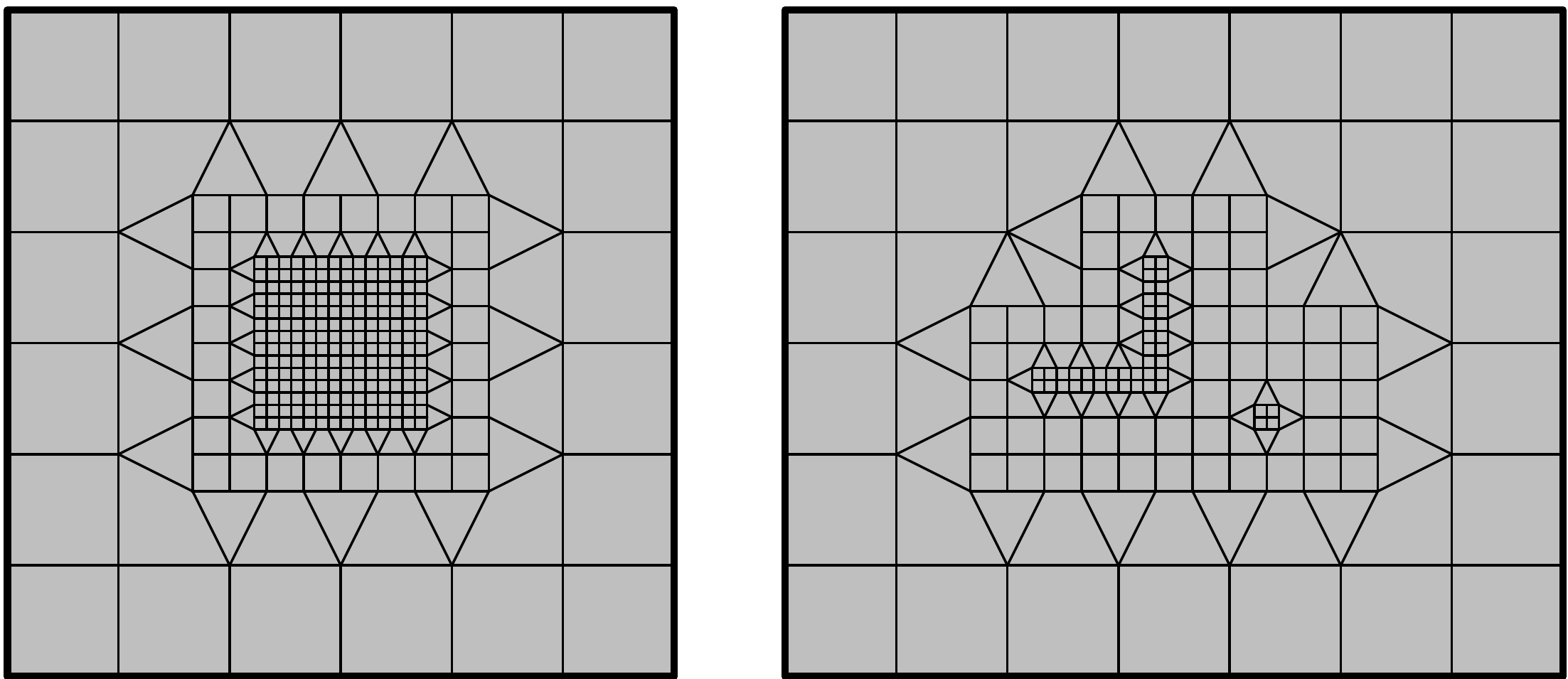}
	\caption{Two examples of orthogonal quadrilateral lattices of the type which satisfy the conditions of Theorem~\ref{MainTheorem}, but not Theorem~\ref{SkopTheorem}.  On the left, nested annuli of squares of differing sizes (each new level is made of squares one third the size of the previous generation) connected by an annuli of orthogonal quadrilaterals. This construction can be iterated any number of times and remain uniformly $K$-round ($K>2\pi/\cos^{-1}(4/5)\approx9.7\ldots$).  The right hand side shows a more complex set of nested domains constructed by the same procedure.  Any sequence of such quadrilateral lattices with the maximum edge length tending to zero will satisfy the conditions of Theorem~\ref{MainTheorem}, regardless of the size of the smallest quadrilateral.}
	\label{MyLatticesFig}
\end{figure}

\begin{definition}\label{RoundDef}
	A quadrilateral $f = [z_1z_2z_3z_4]$ is $K$-round if all interior angles are bounded below by $2\pi/K$ and the ratio of the lengths of any pair of edges is less than $K$.  We will say a lattice $Q$ (or a sequence of lattices $\{Q_n\}_{n \in \N}$) is $K$-round if all quadrilateral faces in the lattice (or sequence of lattices) are $K$-round for a fixed $K$.
\end{definition}

\begin{remark}
	Note that we know that $K \ge 4$ since otherwise there are not quadrilaterals with all interior angles greater than $2 \pi / K$.
\end{remark}

This is a single local condition, which contains the lattice shown above.  The main result of this paper is to show that this is still sufficient to prove the same result.

\begin{theorem}\label{MainTheorem} 
	Let $\Omega \subseteq \C$ be a bounded simply-connected domain.  Let $g:\C \rightarrow \R$ be a smooth function.  Let $Q_n$ be a sequence of $K$-round finite orthogonal lattices approximating a domain $\Omega$.  Then the solution $u_{Q_n,g}:Q_n^* \rightarrow \R$ of the Dirichlet problem on $Q_n$ uniformly converges to the solution $u_{\Omega,g}$ of the Dirichlet problem on $\Omega$.
\end{theorem}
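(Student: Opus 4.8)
The plan is to follow the standard compactness-plus-identification strategy for proving convergence of discrete harmonic functions, but with the delicate point being that we lack the uniform density bound, so a priori estimates must be derived purely from $K$-roundness. First I would establish a discrete maximum principle and a uniform bound on $\|u_{Q_n,g}\|_\infty$ in terms of $\|g\|_\infty$ (this is immediate from the maximum principle once one checks that the discrete Laplacian associated to an orthogonal quadrilateral lattice has the correct sign structure: on an orthogonal quadrilateral the natural finite-volume/finite-element discretization yields positive conductances, with the conductance across an edge proportional to the ratio of the two half-diagonals it separates, which is bounded above and below by constants depending only on $K$). Then the key analytic input is an \emph{a priori equicontinuity} statement: the family $\{u_{Q_n,g}\}$, suitably interpolated to functions on $\overline\Omega$, is equicontinuous on compact subsets of $\Omega$, and moreover is equicontinuous up to the boundary thanks to the smoothness of $g$ and a barrier argument at $\bd\Omega$ (here one uses that $\bd Q_n \to \bd\Omega$ in the Hausdorff sense together with a regularity property of $\bd\Omega$ — simple connectivity of a bounded domain gives enough, e.g. via the Beurling estimate / every boundary point being regular for the continuum Dirichlet problem, which is what lets us transport the boundary barrier to the discrete setting).

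The heart of the matter is the interior equicontinuity, and I expect this to be the main obstacle. Without a uniform bound on the number of vertices in a ball of radius $M(Q_n)$, one cannot quote Skopenkov's estimates directly; the lattice may be arbitrarily fine in some regions and coarse in others (as in Figure~\ref{MyLatticesFig}). The strategy here is to prove a discrete Harnack-type inequality or a discrete Hölder/Caccioppoli estimate that is \emph{scale-invariant} and depends only on $K$. Concretely, I would show: for any annulus $\ann = \{r < |z - z_0| < 2r\}$ contained in $\Omega$ and any $n$ large enough that $M(Q_n) \ll r$, the discrete energy $\sum_{z \sim w} c_{zw}|u(z) - u(w)|^2$ of $u_{Q_n,g}$ restricted to the part of $Q_n$ meeting $\ann$ is controlled — this uses that $K$-roundness forces each quadrilateral face meeting $\ann$ to have both diagonals of comparable size, so the faces cannot be too eccentric, and hence a discrete cutoff function adapted to the annulus has controlled Dirichlet energy regardless of how many vertices lie in $\ann$. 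From such a uniform energy estimate one extracts pointwise oscillation control via a discrete De Giorgi--Nash--Moser iteration or, more cheaply, via the observation that a bounded discrete harmonic function with small energy on a chain of annuli has small oscillation at the center (a discrete analogue of the standard ``energy $\Rightarrow$ oscillation'' estimate for harmonic functions in two dimensions, which is especially clean because in $2$D the relevant capacities of annuli are logarithmic and the constants are universal).

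Granting the a priori bounds, the identification step proceeds as follows. By Arzelà--Ascoli, any subsequence of (piecewise-linear interpolations of) $u_{Q_n,g}$ has a further subsequence converging uniformly on compacts of $\Omega$, and up to $\overline\Omega$ by the boundary equicontinuity, to some continuous $u_\infty$ on $\overline\Omega$ with $u_\infty|_{\bd\Omega} = g$. It remains to show $u_\infty$ is harmonic, which by the boundary condition and uniqueness of the continuum Dirichlet problem forces $u_\infty = u_{\Omega,g}$ and hence (a standard subsequence argument) full convergence. To see harmonicity, I would test the discrete equation against smooth compactly supported $\varphi$: writing the discrete harmonicity of $u_{Q_n,g}$ as a weak statement $\sum_{z\sim w} c_{zw}(u(z)-u(w))(\varphi(z)-\varphi(w)) = 0$, one shows the discrete bilinear form converges to the Dirichlet form $\int_\Omega \nabla u_\infty \cdot \nabla\varphi$. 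This convergence is where orthogonality of the diagonals is essential and not merely convenient: on an orthogonal quadrilateral the edge conductances are exactly the ones for which the discrete gradient energy is a consistent (first-order accurate) approximation to $\frac14\int |\nabla\cdot|^2$ over the face — this is precisely the computation underlying Skopenkov's and Duffin's setups — so the discrete form is an honest Riemann-sum approximation to the continuum form, the error terms being controlled by $M(Q_n)\to 0$ and the $K$-round bound on face geometry (again with no dependence on vertex counts, since the error per face is a fixed fraction of that face's contribution and the contributions sum to something bounded by the total energy). Assembling these pieces gives the theorem.
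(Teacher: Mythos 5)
Your overall architecture --- maximum principle for uniform boundedness, equicontinuity from energy, Arzel\`a--Ascoli, identification of the limit as harmonic by testing against smooth compactly supported functions, then uniqueness --- is exactly the paper's (Theorem~\ref{FullTheorem}, via Propositions~\ref{EquiProp} and~\ref{HarmConvProp}). But two steps you treat as ``observations'' are where essentially all of the paper's work lives, and as stated one of them does not go through. First, the step ``small energy on a chain of annuli implies small oscillation at the center'' is not an off-the-shelf fact here: to convert the energy of the faces meeting a circle $\bd B_r$ into an oscillation bound you must produce a chain of black diagonals along the circle and control its Euclidean length by $C_K r$, i.e.\ you need that the sum of the diameters of all faces meeting a rectifiable curve is at most $C_K$ times its length (Proposition~\ref{DiamProp}). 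This is false face-by-face --- a huge face can meet the circle in an arbitrarily short arc near a corner --- and the paper has to pass to face neighborhoods (Lemma~\ref{IntLem}) to repair it. Your remark that the cutoff function has bounded energy is correct but is not the needed ingredient; the path-finding/diameter-sum estimate is, and it is the genuinely new geometric content that replaces Skopenkov's uniformity hypothesis.

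Second, in the identification step you claim that $\sum_{z\sim w} c_{zw}(u_n(z)-u_n(w))(\varphi(z)-\varphi(w))$ converges to $\int_\Omega \nabla u_\infty\cdot\nabla\varphi$ because the discrete form is ``an honest Riemann-sum approximation.'' That reasoning applies only when both arguments are smooth; with the merely uniformly convergent $u_n$ in one slot you have no pointwise control on $\nabla_{Q_n}u_n$, so the bilinear form is not a Riemann sum for anything. The fix (and the paper's route) is to sum by parts so that the Laplacian falls entirely on the test function, $\sum_z u_n(z)\,\Delta_{Q_n}\varphi(z)$, and then prove a consistency estimate $\sum_{z\in R\cap Q^\bullet}\Delta_{Q_n}\varphi(z)\approx\int_R\Delta\varphi$ (Proposition~\ref{LapProp}). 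Even this is delicate without a vertex-density bound: naive per-vertex error bounds cannot be summed when a ball of radius $M(Q_n)$ may contain arbitrarily many vertices, so the paper must exhibit exact cancellation of the quadratic Taylor terms over interior faces (the signed-area computation of Lemma~\ref{OtherAreaLem}), leaving only boundary-face contributions of total area $O(M(Q_n)r)$. Your proposal needs both of these ingredients spelled out before it constitutes a proof.
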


It is worth exploring the relationship between this condition and the results of \cite{Skopenkov}.  It is clear that there are sequences of $K$-round quadrilateral lattices which do not satisfy the conditions uniformity and non-degeneracy with a uniform choice of constant $C$ as the control by condition $K$ is strictly local (see Figure~\ref{MyLatticesFig} for explicit example).  It is also the case that not every sequence of lattices which are uniform and non-degenerate with constant fixed constant $C$ are uniformly $K$-round.  Indeed Figure~\ref{NotKFig} illustrates that these conditions admit arbitrarily distorted quadrilaterals (in terms of $K$-roundness) of the $Q$ while still being uniformly uniform non-degenerate.  In this way, the theory presented here is a distinct generalization of the theory of \cite{ChelSmir} from the one provided by \cite{Skopenkov} which has been constructed to apply on lattices which contain some of the irregularities one would expect in the case of discrete quantum gravities.

\begin{figure}[ht!]
	\labellist
	\pinlabel $\epsilon$ [bc] at 48 46
	\endlabellist
	\includegraphics[width=1.2in]{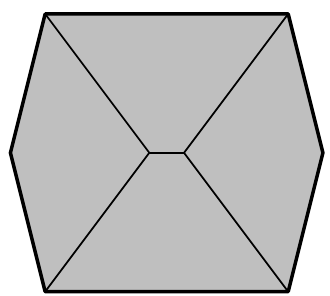}
	\caption{By sending epsilon to zero in comparison to the other edge lengths in the figure (while maintaining orthogonality of the quadrilaterals) one may produce quadrangulations of arbitrarily poor $K$-roundness, while remaining uniformly non-degenerate uniform when tiled to approximate a domain.}
	\label{NotKFig}
\end{figure}

\subsection{Motivations from random maps and circle packings}\label{MotSec}

These questions can be motivated from a number of angles.  For many, the motivation arise from questions in numerical analysis of holomorphic or harmonic functions.  The results of this paper are useful in this direction since we make no global assumptions and thus we may provide convergence results for \emph{adaptive meshes}, which are roughly lattices which are more finely subdivided in some region or interest, and only coarsely subdivided in regions where accurate estimates are not desired.  Theorem~\ref{MainTheorem} states that if you have a sequence of such adaptive meshes with some local geometric control and global control on the longest edge, then convergence to the true harmonic function is guaranteed.  Indeed, Figure~\ref{MyLatticesFig} shows a method to explicitly construct adaptive meshes with arbitrary regions of interest, although showing that such meshes provide a practical method of approximating harmonic functions would require significant additional analysis.

The primary motivation for this work does not come from numerical concerns, but rather from potential applications of this type of result to the study of random planar maps.  A \emph{planar map} is a planar graph together with an embedding in the sphere up to orientation preserving homeomorphisms of the sphere.  If the number of edges is fixed, then the number of maps is finite and one may consider the uniform measure on such a set.  One of the best studied models, and the one of most interest to us here are \emph{random quadrangulations} where all faces are restricted to be quadrilaterals.

The metric geometry of such large planar maps is fairly well understood \cite{BuziosNotes,LeGallRev}.  However, questions of the conformal geometry one obtains if one associates some form of conformal structure, say by considering the Riemann surface formed by filling each face of the quadrangulation with a unit square, is still only beginning to be understood (however recent work by Miller and Sheffield has produced significant progress in this area \cite{QLE,TreeMate1, TreeMate2, TreeMate3}).  

An alternative approach to attempting the understand the conformal geometry is to appeal to the theory of circle packing to provide a combinatorial theory of holomorphicity \cite{Koebe,Thurston,Andreev,RodinSullivan,CirclePack}.  Given a graph $G=(V,E)$, a \emph{circle packing} $\mathcal{P}$ of $G$ is a collection of disjoint open disks, one for each vertex $v \in V$, so that the closures of two disks associated with vertices $v$ and $w$ intersect if and only if $v \sim w$ in $G$.  Given a packing $\mathcal{P}$, one may associate a graph to it, called the \emph{nerve} of $\mathcal{P}$ by taking a vertex for each disk in $\mathcal{P}$, and an edge whenever the closures of the disks intersect.

One of the first and most important results in the theory is the well-known Koebe--Andreev--Thurston circle packing theorem.
\begin{theorem}[Circle Packing Theorem \cite{Koebe,Andreev,Thurston}]\label{CircTheorem}
	If $G$ is a simple planar graph, then there exists a packing $\mathcal{P}$ on the sphere whose nerve is $G$.  If the graph $G$ is a triangulation, then the packing is unique up to action by M\"obius transformations.
\end{theorem}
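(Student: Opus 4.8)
The plan is to reduce the statement for a general simple planar graph to the case of a triangulation of $S^2$, prove uniqueness there by a discrete maximum principle, and prove existence by a variational/continuity argument driven by a convexity property of the relevant angle functions. For the reduction, standard arguments let us assume $G$ is $2$-connected (disconnected graphs and cut vertices are handled by packing the pieces and gluing them along shared circles using M\"obius transformations), so that every face of a fixed planar embedding is bounded by a simple cycle; inserting into each face of length at least four a new vertex joined to every vertex of that cycle produces a \emph{simple} triangulation $\widehat G\supseteq G$ in which no two vertices non-adjacent in $G$ have become adjacent, because every new edge is incident to a new vertex. A packing with nerve $\widehat G$ then restricts, on the disks of the original vertices, to a packing with nerve $G$, so it suffices to handle a triangulation $T$ of $S^2$; the uniqueness clause is only claimed there.

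The one computation needed is elementary: three mutually tangent circles of radii $r_i,r_j,r_k$ have a triangle of centres with side lengths $r_i+r_j$, $r_j+r_k$, $r_k+r_i$, so the angle at centre $i$ is
\[
\theta_i(r_i;r_j,r_k)=\arccos\!\left(\frac{(r_i+r_j)^2+(r_i+r_k)^2-(r_j+r_k)^2}{2(r_i+r_j)(r_i+r_k)}\right),
\]
which is strictly decreasing in $r_i$ and strictly increasing in $r_j$ and $r_k$ (the same monotonicity persists with the spherical or hyperbolic law of cosines). For uniqueness, let $\mathcal P,\mathcal P'$ both have nerve $T$, with radius functions $r,r'$; sending a common vertex to $\infty$ we may place both in the plane. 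Choose a vertex $v$ other than the one at infinity maximizing $r_v/r'_v$ and rescale $\mathcal P'$ so that this maximum is $1$, giving $r_v=r'_v$ and $r_w\le r'_w$ for every $w$. In each triangle at $v$ the angle at $v$ has not increased in passing from $\mathcal P'$ to $\mathcal P$; summing over the triangles at $v$ and using that both angle sums equal $2\pi$, every one of these inequalities must be an equality, which by strict monotonicity forces $r_w=r'_w$ for each neighbour $w$, and since $T$ is connected $r\equiv r'$. Two packings of $T$ with the same radii determine the same geodesic triangle mesh and hence coincide, so unwinding the normalization shows the packing is unique up to M\"obius transformations (the vertex at infinity needs a little separate care).

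For existence I would favour the variational route. Remove one vertex $v_0$ of $T$ (its link is a cycle) to obtain a triangulation of a disk, to be packed in the hyperbolic plane with the boundary circles taken to be horocycles; since in the disk model a horocycle is internally tangent to $\bd\Disk$, the circle $\bd\Disk$ itself can serve for $v_0$, recovering a packing of $T$ on $S^2$. Writing the interior radii as $u=(\log r_v)_v$ and $\Phi(u)=(K_v(u))_v$ for the angular defects $K_v=2\pi-\sum_{f\ni v}\theta_v$ (the sum over triangles $f$ at $v$), a packing is exactly a zero of $\Phi$; the angle formula above, together with the symmetry of its mixed partial derivatives (a Colin de Verdi\`ere type identity), exhibits $\Phi$ as the gradient of a strictly convex function $F$, so a zero exists as soon as $F$ is proper --- that is, as soon as one rules out degenerations in which some $r_v$ blows up or collapses or a triangle of centres flattens. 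Alternative routes are Thurston's relaxation iteration (repeatedly nudge $r_v$ to push $K_v$ toward zero, then prove convergence) and Andreev's theorem on hyperbolic polyhedra, via the dictionary between packings of $T$ and ideal polyhedra in $\mathbb{H}^3$.

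The main obstacle is the existence half, and within it precisely the properness (compactness) step: one must show that no sequence of near-packings can let the radii degenerate without contradicting the combinatorics of a triangulation of $S^2$ --- every vertex has degree at least three, the link of each vertex is a cycle, and the complex closes up globally. That is where the global topology genuinely enters; by contrast the uniqueness half is a soft maximum-principle argument needing only the monotonicity of $\theta_i$ in its three arguments.
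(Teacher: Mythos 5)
The paper does not prove this statement: the Circle Packing Theorem is quoted as a classical result with citations to Koebe, Andreev and Thurston, and no argument is given in the text, so there is no in-paper proof to compare yours against. Judged on its own terms, your outline follows the standard modern route --- reduce to a simple triangulation of $S^2$ by coning off each large face, prove rigidity from the monotonicity of the angle $\theta_i(r_i;r_j,r_k)$ via a maximum-principle argument at a vertex maximizing $r_v/r'_v$, and prove existence variationally through the curvature map $\Phi$ and a convex potential. The reduction and the uniqueness half are essentially complete; the one wrinkle is the vertex you send to infinity, whose ``triangles'' are degenerate and whose ``radius'' does not rescale, so the angle-sum comparison at its neighbours needs reinterpretation. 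This is most cleanly avoided by instead normalizing so that the three circles of one fixed face of $T$ coincide in the two packings: then every remaining vertex has Euclidean angle sum exactly $2\pi$, the maximum-ratio argument propagates a common scale factor $\lambda$ across the whole graph, and the base face forces $\lambda=1$.

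The genuine gap is the one you name yourself: the properness (compactness) step in the existence argument is asserted rather than proved, and it is precisely the mathematical content of the theorem. One must show that a minimizing sequence for the potential $F$ cannot let the label vector degenerate. For a single vertex this follows from the boundary behaviour of the curvature, since $\theta_i\to\pi$ as $r_i\to 0$ gives $K_v\to 2\pi-\pi\deg(v)<0$ while $K_v\to 2\pi>0$ as $r_v\to\infty$; but the delicate point is ruling out a proper subset $S$ of vertices whose radii collapse or blow up \emph{jointly relative to the rest}, and excluding that requires an Euler-formula count comparing the number of vertices, edges and faces of the subcomplex spanned by $S$ --- this is exactly where the hypothesis that $T$ triangulates the sphere enters and where the argument would fail for a non-planar complex. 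Without that count the convexity argument does not close, so as written the proposal is an accurate road map of a known proof rather than a proof.
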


Such circle packings provide an alternative natural notion of discrete holomorphic maps.  For the sake of brevity, we provide only a brief non-rigorous discussion of this result. See \cite{RodinSullivan,CirclePack} for a detailed and rigorous proof.  Consider a domain $\Omega$ in $\C$ which for concreteness contains the points $0$ and $1$ and let $H_\epsilon$ denote sublattice the triangular lattice with lattice spacing $\epsilon > 0$ intersected with $\Omega$.  To avoid complications, we assume that $0$ and $1$ are  vertices of $H_\epsilon$ and that the domain $\Omega$ is chosen such that $H_\epsilon$ is is connected for all $\epsilon < \epsilon_0$, however the definitions may be extended so these conditions are not required.  We now extend this to a triangulation of the sphere by adjoining a point at infinity and connecting every vertex on the boundary of $H_\epsilon$ to that point.

\begin{figure}[ht!]
	\labellist
	\pinlabel $f_\epsilon$ [bc] at 200 110
	\pinlabel $0$ [l] at 52 108
	\pinlabel $1$ [l] at 92 108
	\pinlabel $f_\epsilon(0)$ [tc] at 310 -8
	\pinlabel $f_\epsilon(1)$ [tc] at 360 -8
	\endlabellist
	\includegraphics[width=5 in]{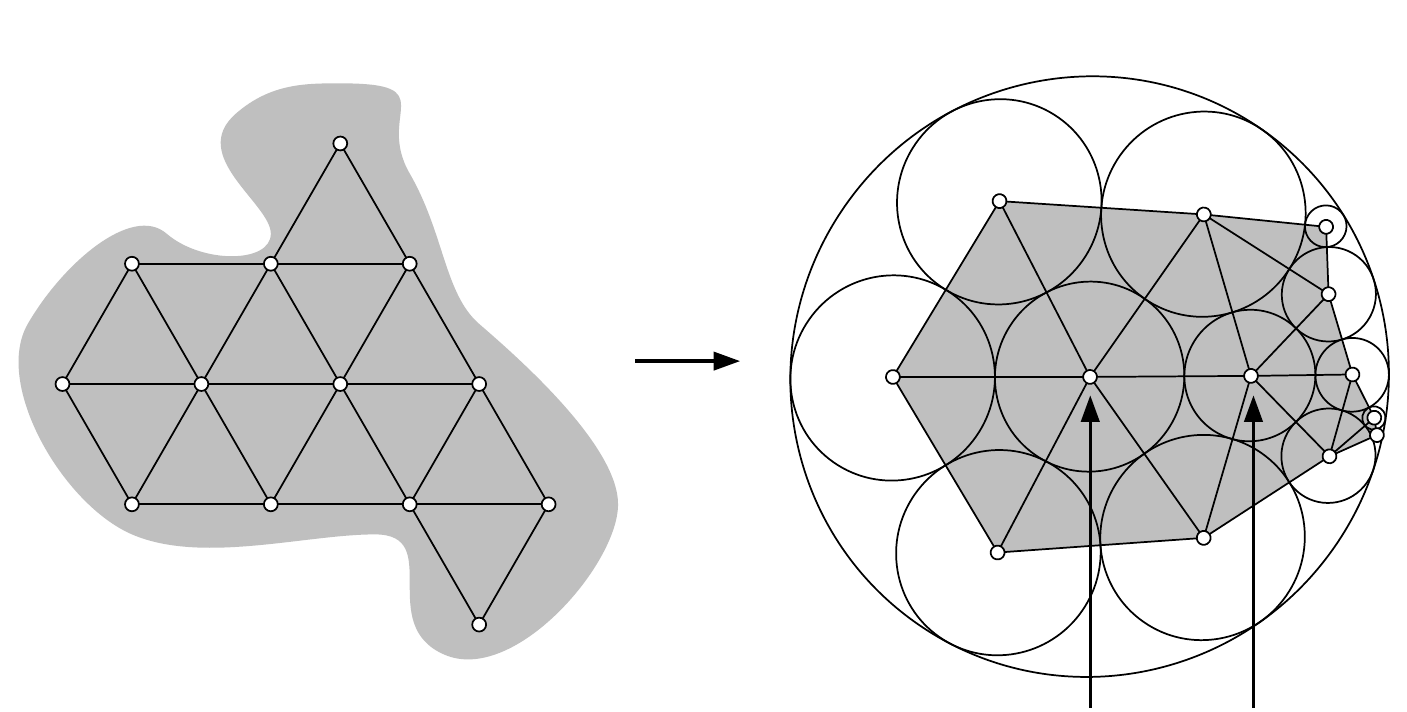}
	\vspace{0.2in}
	\caption{An illustration of using circle packings to approximate conformal maps via Theorem~\ref{RSThm}.  The grey domain on the left is approximated by a simply connected subset of the hexagonal lattice with side-length $\epsilon$.  Edges to the point at infinity have been suppressed for clarity.  This is then circle packed to produce the approximately conformal image on the right.  The limiting map as $\epsilon \rightarrow 0$ becomes the unique Riemann map.  The circle packing was produced using CirclePack \cite{CPSoft}.}
	\label{RSFig}
\end{figure}

By applying the Theorem~\ref{CircTheorem}, we obtain a packing $\mathcal{P}$ on the Riemann sphere whose nerve is $H_\epsilon$ which is unique up to action by M\"obius transformations.  We may make this packing unique by forcing the packing to have the disk associated with infinity be the disk complement to the unit disk on the Riemann sphere, have the disk associated with $0$ be centered at $0$, and have the disk associated with $1$ be centered on a positive real.

By extending linearly over faces of $H_\epsilon$ contained within $\Omega$, one may now obtain a function $f_\epsilon$ which sends a subdomain of $\Omega$ to a subdomain of $\D$, the unit disk.  The Rodin--Sullivan Theorem is now the following.

\begin{theorem}[Rodin--Sullivan Theorem \cite{RodinSullivan}]\label{RSThm}
The function $f_\epsilon$ converges pointwise to the Riemann map from $\Omega$ to $\D$ fixing $0$ and sending $1$ to the positive real axis.
\end{theorem}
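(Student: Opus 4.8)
The plan is to follow the classical three‑step scheme. First I would establish uniform geometric control of the packings $\mathcal P_\epsilon$ via the \emph{Ring Lemma}: there is a universal constant $c>0$, depending only on the length of the surrounding ring, such that whenever a circle of a packing is encircled by a closed chain of circles each tangent to it and to its two chain‑neighbors, the ratio of the radius of the central circle to that of any chain circle lies in $[c,c^{-1}]$. Since the triangular lattice is $6$‑regular, this applies at every interior vertex of $H_\epsilon$ with the single constant $c=c(6)$, and the proof is elementary Euclidean geometry — six circles cannot close up around a central circle that is too large relative to one of them. Iterating the estimate along chains of adjacent vertices shows that on any compact $K$ with $\dist(K,\bd\Omega)>\delta$ all circles meeting $K$ have comparable radii, with a constant depending only on $\delta$, and in particular radii tending to $0$ uniformly as $\epsilon\to 0$. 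Comparability of adjacent radii means the piecewise‑affine map $f_\epsilon$ distorts triangles by a bounded factor, hence is $K_0$‑quasiconformal on compacts for a universal $K_0$; together with the normalization (circle at $\infty$ complementary to $\D$, $f_\epsilon(0)=0$, $f_\epsilon(1)\in\R_{>0}$) this pins the family down, and quasiconformal compactness yields a subsequence $f_{\epsilon_j}\to f$ locally uniformly with $f\colon\Omega\to\overline\D$ a $K_0$‑quasiconformal map.

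The heart of the matter — and the step I expect to be the main obstacle — is the \emph{Hexagonal Packing Lemma}: the ratio function $s_\epsilon(v)\ceq r_\epsilon(v)/r_\epsilon(v')$, comparing the radius at $v$ to that at a fixed neighbor $v'$, tends to $1$ uniformly on compact subsets of $\Omega$. This is a rigidity statement whose core is the fact that the only locally finite circle packing of the entire plane with hexagonal combinatorics and uniformly bounded radius ratios is the regular hexagonal packing, up to a Euclidean similarity. One transfers this to $\mathcal P_\epsilon$ by a diagonal compactness argument over combinatorial balls of growing radius about an interior point: after rescaling so that $r_\epsilon(v')=1$, any subsequential limit of $\mathcal P_\epsilon$ near that point is such an entire packing, hence regular, which is exactly $s_\epsilon\to 1$. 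The entire‑plane rigidity is the delicate ingredient; the standard route is to show the ratio function of an entire hexagonal packing is, in an appropriate discrete sense, subharmonic (alternatively, to run an extremal‑length argument), so that a nonzero deviation from $1$ must propagate outward and violate the uniform bound.

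Granting the Hexagonal Packing Lemma, the conclusion follows. Uniform convergence $s_\epsilon\to 1$ forces the complex dilatation of $f_{\epsilon_j}$ to tend to $0$ locally uniformly, so the limit $f$ is $1$‑quasiconformal, i.e.\ conformal — holomorphic and univalent — on $\Omega$, with $f(\Omega)\subseteq\D$, $f(0)=0$, $f(1)\in\R_{>0}$. To see $f(\Omega)=\D$, observe that the boundary circles of $\mathcal P_\epsilon$ shrink by the Ring Lemma while their union separates the image of the interior from the circle at $\infty$, so $f_\epsilon(\bd H_\epsilon)$ converges to $\bd\D$; an argument‑principle / open‑mapping argument then gives $\D\subseteq f(\Omega)$, hence equality. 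Thus $f$ is the conformal map $\Omega\to\D$ with the prescribed normalization, which is unique. Since every subsequential limit of $\{f_\epsilon\}$ equals this single map, $f_\epsilon$ itself converges to it, proving the theorem.
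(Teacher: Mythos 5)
The paper does not actually prove this statement; it explicitly defers to \cite{RodinSullivan,CirclePack} (``we provide only a brief non-rigorous discussion of this result''), so there is no internal proof to compare against. Your sketch is the classical Rodin--Sullivan argument, and its architecture is correct: the Ring Lemma gives bounded ratios of adjacent radii at interior vertices, hence uniform quasiconformality of $f_\epsilon$ on compacts and a normal family; the Hexagonal Packing Lemma, resting on the rigidity of the hexagonal packing of the plane, forces the dilatation to vanish in the limit; and uniqueness of the normalized Riemann map upgrades subsequential convergence to convergence of the whole family. You also correctly identify the rigidity statement as the delicate ingredient.

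There is, however, one genuine gap: the boundary circles do \emph{not} shrink ``by the Ring Lemma.'' The Ring Lemma (Lemma~\ref{RingLemma} of the paper) bounds the radii of the circles in a complete ring \emph{from below} relative to the circle they surround, i.e.\ it prevents an interior circle from being much larger than its neighbors; it gives no upper bound on a circle at the combinatorial boundary. In the normalization used here each boundary vertex is adjacent to the disk complementary to $\D$, so its ring contains an enormous circle and the lemma yields nothing small. The correct tool is Rodin and Sullivan's length--area lemma: a circle separated from a base circle by many disjoint combinatorial chains must be small because the total area of the packing is bounded. That lemma is what shows the boundary circles shrink (hence $f_\epsilon(\bd H_\epsilon)\to\bd\D$ and $f(\Omega)=\D$), and it is also needed earlier to rule out degeneration of the normalized limits (e.g.\ to prevent $f_{\epsilon_j}(1)\to 0$, so that the subsequential limit is nonconstant). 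Without it, both the surjectivity step and the non-triviality of the limit are unproved.
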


Due to this result, circle packings are often thought of as providing a discrete analogue to the Riemann mapping theorem.  For our discussion, we will now adopt this point of view, and use the circle packing as a method of providing a discrete way to assign a conformal structure to a triangulation.

To make connections with discrete holomorphic functions on quadrangulations, we now show how to convert a circle packed triangulation of the plane into an orthogonal quadrangulation.

To each triangle $T = [z_1z_2z_3]$, we associate the incenter $i_T$ which is the mutual intersection points of all interior angle bisectors of the triangle.  Let $c_1, c_2, c_3$ be the three points on the edges of $T$ which intersect the circle associated to the vertices of the circle packing opposite to the $z_k$ of the same index.  Then, the quadrilaterals $[z_kc_{k-1}i_Tc_{k+1}]$ are orthogonal for all $k = 1,2,3$.  This is illustrated in Figure~\ref{TriToQuad}.

\begin{figure}[ht!]
	\labellist
	\pinlabel $z_1$ [rt] at 85 110
	\pinlabel $z_2$ [lt] at 210 110
	\pinlabel $z_3$ [rb] at 85 210
	\pinlabel $i_T$ [t] at 130 150
	\pinlabel $c_1$ [lb] at 140 160
	\pinlabel $c_2$ [r] at 85 155
	\pinlabel $c_3$ [t] at 125 110
	\endlabellist
	\includegraphics[width=3in]{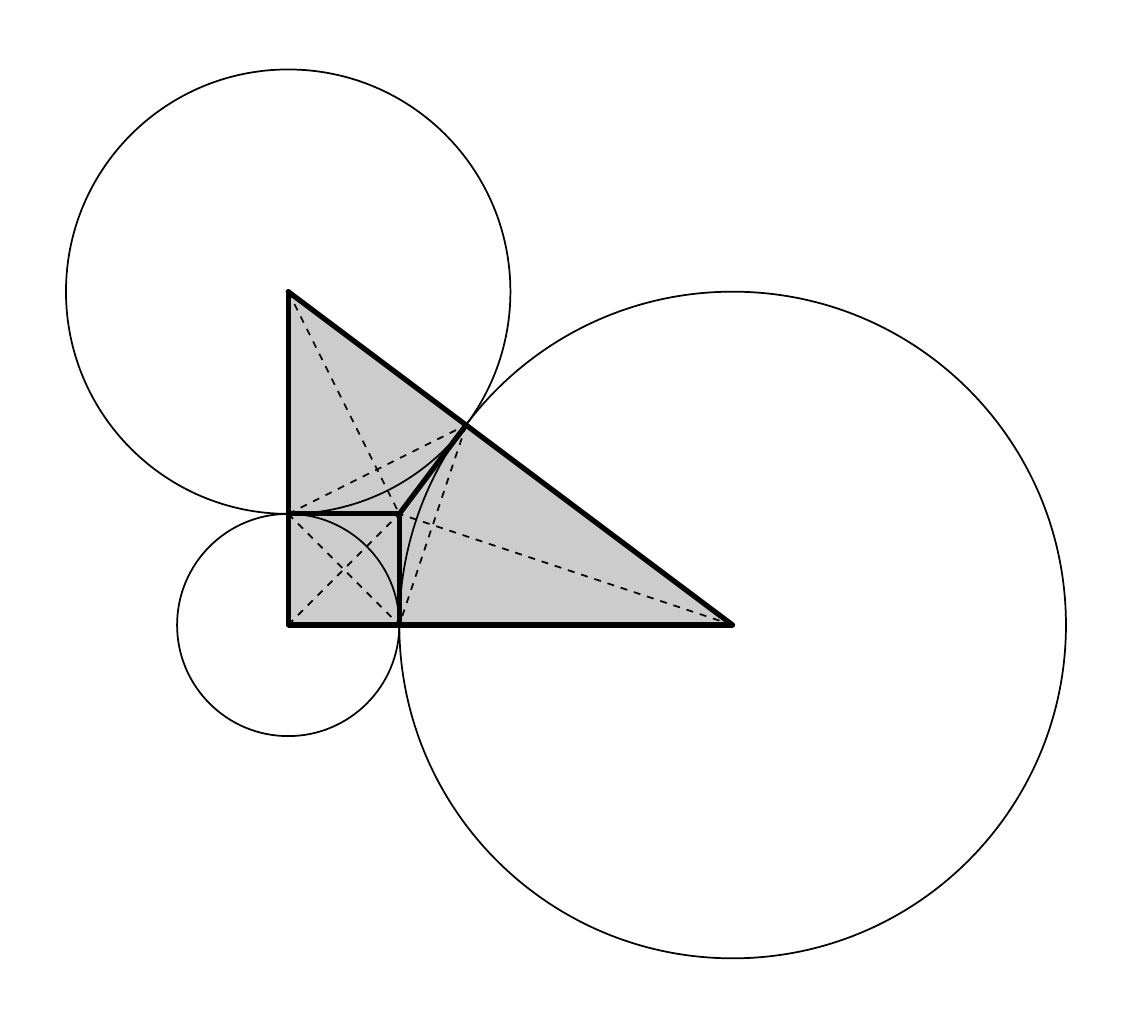}
	\caption{The procedure to convert a circlepacked triangle into a collection of orthogonal quadrilaterals.}
	\label{TriToQuad}
\end{figure}

If we wish to now apply our theorem to harmonic functions on such a discrete unifomization of the discrete Riemann surface, it will not be sufficient to simply have an orthogonal quadrangulation, we must have additional local control on the geometry of each of the faces.  We may do so by restricting our attention to triangulations of bounded degree and appealing to the Ring Lemma.

\begin{lemma}[Ring Lemma \cite{RodinSullivan}]\label{RingLemma}
	There is a constant $r_n$ depending only on $n$ such that if $n$ circles surround the unit disk (all disks cyclically tangent with each other and non-overlapping) then each circle has radius at least $r_n$.
\end{lemma}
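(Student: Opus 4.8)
The plan is to normalize so that the surrounded circle is the unit circle $C_0$ and to prove that each ring circle $C_i$ has radius $r_i$ bounded below by a positive constant depending only on $n$. The main tool is a trigonometric dictionary: if two circles of radii $\rho,\rho'$ are externally tangent to $C_0$ and to each other, then the angle $\alpha$ subtended at the origin by their two points of tangency with $C_0$ satisfies $\sin(\alpha/2)=\sqrt{\rho\rho'/((1+\rho)(1+\rho'))}$, which follows from the law of cosines applied to the triangle formed by the origin and the two centers; more generally, two circles tangent to $C_0$ whose tangency points subtend an angle $\phi$ at the origin have disjoint interiors exactly when $\phi\ge 2\arcsin\sqrt{\rho\rho'/((1+\rho)(1+\rho'))}$. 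Since $C_1,\dots,C_n$ form an embedded ring, their tangency points occur in cyclic order on $C_0$ and the consecutive angular gaps $\alpha_1,\dots,\alpha_n$ sum to $2\pi$.

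First I would record the easy half. Writing $b_i=r_i/(1+r_i)\in(0,1)$, the identity $\sum_i 2\arcsin\sqrt{b_ib_{i+1}}=2\pi$ together with the bound $\arcsin x\le \tfrac{\pi}{2}x$ on $[0,1]$ forces $\max_i b_i\ge\sin(\pi/n)$, hence $\max_i r_i\ge \sin(\pi/n)/(1-\sin(\pi/n))>0$: at least one ring circle is macroscopically large. The substance is the complementary statement, that no ring circle is macroscopically small, and the key local observation is that a very small circle forces a small neighbour. Indeed, if $r_k=\epsilon$ then $\alpha_{k-1}=2\arcsin\sqrt{b_{k-1}b_k}<2\arcsin\sqrt{b_k}$ and likewise $\alpha_k<2\arcsin\sqrt{b_k}$, so the tangency points of $C_{k-1}$ and $C_{k+1}$ lie within $4\arcsin\sqrt{b_k}$ of one another; disjointness of $C_{k-1}$ and $C_{k+1}$ (which are non-adjacent once $n\ge4$) then yields $2\arcsin\sqrt{b_{k-1}b_{k+1}}\le 4\arcsin\sqrt{b_k}$, and a direct manipulation of this inequality gives $\min(r_{k-1},r_{k+1})\le C_0\,\epsilon$ for an absolute constant $C_0$. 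More generally, invoking disjointness of ring circles at larger distance along the ring shows that a whole contiguous block of consecutive ring circles cannot all be much smaller than the circles flanking the block.

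To finish I would argue by contradiction. Suppose rings of $n$ circles around the unit circle exist whose minimal radius tends to $0$; after normalizing (by a M\"obius transformation fixing $C_0$, used to pin down the largest ring circle) pass to a subsequential limit. The ring circles whose radii vanish degenerate to points of $C_0$, and by the estimate above an entire contiguous block of them collapses to a single point $p\in C_0$; the two ring circles immediately flanking that block converge to two \emph{distinct} circles, both tangent to $C_0$ at $p$, hence internally tangent to each other at $p$ and therefore with overlapping interiors. But along the sequence these two flanking circles are non-adjacent and have disjoint interiors, and disjointness of interiors is preserved under the limit; this is a contradiction. Carrying the estimates out quantitatively instead --- equivalently, running an induction on $n$ whose base cases $n=3,4$ follow directly from the Descartes circle relation --- produces the explicit constant $r_n$.

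The step I expect to be the main obstacle is precisely this last one, because the space of such configurations is \emph{not} compact: a ring circle may be arbitrarily large (degenerating to a half-plane) while another is arbitrarily small, and a careless rescaling can even make $C_0$ collapse. Legitimizing the limiting argument therefore requires choosing the normalization with care --- M\"obius transformations preserving $C_0$, arranged to control the largest and the smallest ring circle at once --- or, what comes to the same thing, tracking all constants by hand through the ``small forces small'' propagation and checking that it genuinely chains around the entire ring with only an $n$-dependent loss.
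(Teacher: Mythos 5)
The paper does not prove this lemma: it is imported verbatim from Rodin--Sullivan \cite{RodinSullivan} and used as a black box in the proof of Corollary~\ref{PackCor}, so there is no in-paper argument to compare against. Judged on its own, your proof is essentially the standard one and is correct. The trigonometric dictionary checks out: the law of cosines gives $1-\cos\alpha=2\rho\rho'/((1+\rho)(1+\rho'))$, hence $\sin(\alpha/2)=\sqrt{bb'}$ with $b=\rho/(1+\rho)$, and the same computation shows that disjointness of two circles tangent to $C_0$ is equivalent to $\phi\ge 2\arcsin\sqrt{bb'}$. Your local propagation lemma also survives scrutiny, though I would spell out the manipulation since it is the one place a reader might balk: writing $x=\sqrt{b_{k-1}}$, $y=\sqrt{b_{k+1}}$, $t=\sqrt{b_k}$, the chain
\[
xy\le\arcsin(xy)\le\arcsin(xt)+\arcsin(ty)\le\tfrac{\pi}{2}\,t(x+y)\le\pi t\max(x,y)
\]
yields $\min(x,y)\le\pi t$, i.e.\ $\min(b_{k-1},b_{k+1})\le\pi^2 b_k$, which is the linear (not merely square-root) propagation you assert.

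Three smaller points. First, the normalization worry in your closing paragraph is a non-issue: the surrounded circle is fixed to be the unit circle by hypothesis, so a configuration is described by $n$ radii and $n$ tangency angles; after a rotation and passage to a subsequence these converge in $[0,\infty]^n\times(\R/2\pi\Z)^n$, with radius $\infty$ giving a tangent half-plane, and no M\"obius transformation is needed (a non-rotational one would in fact destroy the normalization of $C_0$). Second, the limiting argument must separately handle the case in which all but one circle degenerates, since then the two circles flanking the vanishing block coincide and there is no disjoint pair to contradict; there the identity $\sum_i\alpha_i=2\pi$ finishes directly, as every gap then involves a vanishing circle and so the gaps cannot sum to $2\pi$. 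Third, you neither need nor can easily guarantee that the two flanking circles converge to \emph{distinct} circles; what the contradiction actually requires is only that two circles tangent to the unit circle at the same point from outside, with radii bounded below, have nested (hence intersecting) interiors, which is incompatible with disjointness of interiors passing to the limit whether or not the limits coincide.
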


The Ring Lemma only applies to interior vertices, thus to obtain the desired geometric control we must remove the quadrilaterals obtained from boundary triangles.  These results now prove the following circle packing version of the main theorem as a corollary.

\begin{corollary}\label{PackCor}
	Suppose $\{Q_n\}$ is a sequence of quadrilateral lattices approximating $\D$ obtained as above by circle packing triangulations with maximum degree uniformly bounded in $n$.  Then, the solution to the Dirichlet problem on $Q_n$ converges uniformly to the solution of the Dirichlet problem on $\D$.
\end{corollary}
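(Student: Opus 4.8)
The plan is to deduce the corollary from Theorem~\ref{MainTheorem}. Since $\D$ is a bounded simply connected domain and $g$ is smooth, all that has to be checked is that the lattices $Q_n$ --- after the promised removal of the quadrilaterals coming from boundary triangles --- are finite orthogonal lattices approximating $\D$ and are \emph{uniformly} $K$-round for a single constant $K$. Finiteness and the approximation property are part of the hypothesis, and orthogonality is built into the incenter construction of Figure~\ref{TriToQuad}, so the content of the argument lies entirely in producing a uniform roundness constant; this is exactly where discarding the boundary faces pays off, since the Ring Lemma only sees interior vertices.

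First I would record the Euclidean data of a single face. Fix a packing triangle $T$ with circle centres $z_1,z_2,z_3$ and radii $\rho_1,\rho_2,\rho_3$; the triangle of centres has side lengths $|z_jz_k|=\rho_j+\rho_k$, semiperimeter $s=\rho_1+\rho_2+\rho_3$, and --- the key identity --- $s-|z_jz_k|=\rho_\ell$ whenever $\{j,k,\ell\}=\{1,2,3\}$. Consequently the incircle of $T$ touches the side $z_jz_k$ precisely at the tangency point $c_\ell$ of the two packing circles lying on that side, so each of the three quadrilaterals $[z_k c_{k-1} i_T c_{k+1}]$ is a right kite: two of its sides have length $\rho_k$ (along the two edges of $T$ at $z_k$), the other two have length the inradius $\rho_T=\sqrt{\rho_1\rho_2\rho_3/(\rho_1+\rho_2+\rho_3)}$, its diagonals are the bisector $z_k i_T$ and the chord $c_{k-1}c_{k+1}$, and these meet orthogonally by the reflection symmetry of the kite. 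Its interior angles are $\theta_k/2$ at $z_k$ (half the angle of $T$ at $z_k$), right angles at $c_{k\pm1}$, and $\pi-\theta_k/2$ at $i_T$.

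Next I would bring in the Ring Lemma. Because every surviving quadrilateral comes from a triangle all three of whose vertices are interior and of degree at most $D$ (the uniform degree bound), applying Lemma~\ref{RingLemma} in turn at $z_1$, $z_2$, $z_3$ yields the two-sided comparison $r_D\le \rho_j/\rho_k\le r_D^{-1}$ for all $j,k$, where $r_D\ceq\min_{3\le m\le D} r_m>0$. The law of cosines then gives $1-\cos\theta_k=2\rho_{k-1}\rho_{k+1}/\bigl((\rho_k+\rho_{k-1})(\rho_k+\rho_{k+1})\bigr)$ and $1+\cos\theta_k=2\rho_k(\rho_1+\rho_2+\rho_3)/\bigl((\rho_k+\rho_{k-1})(\rho_k+\rho_{k+1})\bigr)$, so comparability of the $\rho_i$ forces $\theta_k\in[\alpha_0,\pi-\alpha_0]$, hence every angle of the kite lies in $[\alpha_0/2,\pi-\alpha_0/2]$, for some $\alpha_0=\alpha_0(D)>0$; and $\rho_k/\rho_T=\sqrt{\rho_k(\rho_1+\rho_2+\rho_3)/(\rho_{k-1}\rho_{k+1})}$ is likewise pinched between constants depending only on $D$, so any two edges of the kite have ratio at most some $K_0=K_0(D)$. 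Thus the $Q_n$ are uniformly $K$-round with $K=K(D)\ceq\max\{K_0,\,4\pi/\alpha_0\}$ (and $K\ge4$ automatically), so Theorem~\ref{MainTheorem} applies with $\Omega=\D$ and delivers the asserted uniform convergence $u_{Q_n,g}\to u_{\D,g}$.

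The only step with any friction is the middle of the previous paragraph: upgrading the one-sided Ring Lemma --- which needs all three face vertices interior, which is precisely why the boundary triangles are deleted --- to a two-sided bound on radius ratios, and then checking that the incenter construction converts bounded radius ratios into a uniform roundness constant; this is routine but unavoidable, and the place where every hypothesis is genuinely used. One should also note that deleting the boundary faces leaves a legitimate quadrilateral lattice still approximating $\D$ in the sense of Definition~\ref{ApproxDef}, the deleted faces forming a collar of Euclidean width comparable to $M(Q_n)\to0$; given the way $Q_n$ is produced this is immediate.
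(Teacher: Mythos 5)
Your proposal is correct and follows the same route as the paper: orthogonality and the approximation property come from the incenter construction, the Ring Lemma (applicable because boundary faces are discarded) gives two-sided control on adjacent radii, and a coordinate computation converts this into uniform $K$-roundness so that Theorem~\ref{MainTheorem} applies; the paper's proof is just a sketch of exactly this, and you have supplied the omitted computation. One small slip that does not affect the conclusion: the interior angle of the kite $[z_k c_{k-1} i_T c_{k+1}]$ at $z_k$ is the full triangle angle $\theta_k$ (its two sides lie along the two edges of $T$ at $z_k$), not $\theta_k/2$, and correspondingly the angle at $i_T$ is $\pi-\theta_k$.
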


\begin{proof}
Since the $\{Q_n\}$ were obtained by taking an orthogonal quadrangulation associated to circlepacked triangulations, we are in the setting above, hence we know that the $\{Q_n\}$ is a sequence of orthogonal quadrilateral lattices approximating $\D$.  By the Ring Lemma (Lemma~\ref{RingLemma}) there are uniform upper and lower bounds on the angles in the corners of each triangle, and thus on the edge ratios of the triangle.  This can be seen to imply $K$-roundness of the quadrilaterals by a computation in coordinates.
\end{proof}

One important fact to note about this corollary is to observe that the condition that the quadrilateral lattices approximate $\D$ includes within it the requirement that the diameter of the largest circle in the packing tends to zero.

From our motivation of random planar maps, the restriction to triangulations of bounded degree may seem severe, and indeed from the point of view of various combinatorial bijections (as in \cite{LeGallRev,BuziosNotes,HCSheff}) the condition is not simple to enforce, however it is believed that such a degree bound does not change the resulting scaling limit.  We now provide an explicit model of a random surface to which this theorem can be applied.

\begin{figure}
	\tiny
	\labellist
	\pinlabel $0$ [tc] at 29 5 
	\pinlabel $1$ [tc] at 61 5 
	\pinlabel $2$ [tc] at 93 5
	\pinlabel $X_t$ [l] at 100 17
	 
	\pinlabel $0$ [tc] at 169 5 
	\pinlabel $1$ [tc] at 201 5 
	\pinlabel $2$ [tc] at 233 5
	\pinlabel $X_t$ [l] at 240 17
	
	\pinlabel $0$ [tc] at 309 5 
	\pinlabel $1$ [tc] at 341 5 
	\pinlabel $2$ [tc] at 373 5
	\pinlabel $X_t$ [l] at 380 17
	
	\pinlabel $0$ [tc] at 509 5 
	\pinlabel $1$ [tc] at 541 5 
	\pinlabel $2$ [tc] at 573 5
	\pinlabel $X_t$ [l] at 580 17
	
	\pinlabel $0$ [r] at 20 20 
	\pinlabel $1$ [r] at 20 52 
	\pinlabel $2$ [r] at 20 84
	\pinlabel $Y_t$ [bc] at 26 100
	
	\pinlabel $0$ [r] at 160 20 
	\pinlabel $1$ [r] at 160 52 
	\pinlabel $2$ [r] at 160 84
	\pinlabel $Y_t$ [bc] at 166 100
	
	\pinlabel $0$ [r] at 300 20 
	\pinlabel $1$ [r] at 300 52 
	\pinlabel $2$ [r] at 300 84
	\pinlabel $Y_t$ [bc] at 306 100
	
	\pinlabel $0$ [r] at 500 20 
	\pinlabel $1$ [r] at 500 52 
	\pinlabel $2$ [r] at 500 84
	\pinlabel $Y_t$ [bc] at 506 100
	
	\endlabellist
	\includegraphics[width=6in]{DrivingFunction}
	\caption{An illustration of the notion of driving function topology as discussed in \cite{HCSheff}---a more in depth discussion can be found there.  The type of random surface under consideration is the uniform random measure on quadrangulations with $n$ faces where each face has either the diagonal between the white vertices or the diagonal between the black vertices added so that the induced white black subgraphs form a pair of dual trees.  In the above diagram, the trees are show in solid black lines, while the edges of the quadrangulation are shown dotted.  Given such a surface along with a choice of starting edge of the quadrangulation and orientation, there is a unique curve which visits each triangle of the subdivided quadrangulation exactly once which separates the white tree from the black tree.  As this path crosses each edge of the quadrangulation, the \emph{driving function} is obtained by keeping track of the distance to the root in the black tree, $X_t$, and the distance from the root in the white tree, $Y_t$.  This procedure produces a random walk in the non-negative quadrant of $\Z^2$ starting and ending at $(0,0)$ which can be seen to be in a bijective correspondence with the original map.  Notion of the driving function topology is the topology induced on marked quadrangulations induced by the $L^\infty$ (or some other choice of norm) topology of the driving functions.}
	\label{DrivingFig}
\end{figure}

\subsection{A bounded degree random surface}

One of the simplest to state models of these random surfaces is provided by a form of a classic bijection of Mullin \cite{Mullin} and further expanded by Bernardi \cite{Bernardi} and Sheffield \cite{HCSheff}.  The essential ingredient in these models is that one may produce a random surface by gluing two independent uniform random planar trees along their boundary cycles.  We will use the point of view that one may consider the \emph{driving function topology} as a reasonable and technically convenient topology to discuss the convergence of such random models \cite{HCSheff}.  A quick discussion of this topology is given in Figure~\ref{DrivingFig}. 

We produce here a model which converges to the same limit in the driving function topology as the model in \cite{HCSheff} (in the case $p=0$), while having uniformly bounded degree.

Given an integer $n$, we will construct a triangulation $T_n$ in the following way.  Start with three cycles of length $4n+2$, with marked origin, which we will refer to as the red cycle, green cycle, and blue cycle.  These cycles will be connected together in the manner indicated on the left of Figure~\ref{WeldFig}.  

\begin{figure}[ht!]
	\hskip 0.3in
	\tiny
	\labellist
	\pinlabel $\textrm{Red}$ [r] at 15 112
	\pinlabel $\textrm{Green}$ [r] at 15 97
	\pinlabel $\textrm{Blue}$ [r] at 15 82
	\endlabellist
	\includegraphics[width=4in]{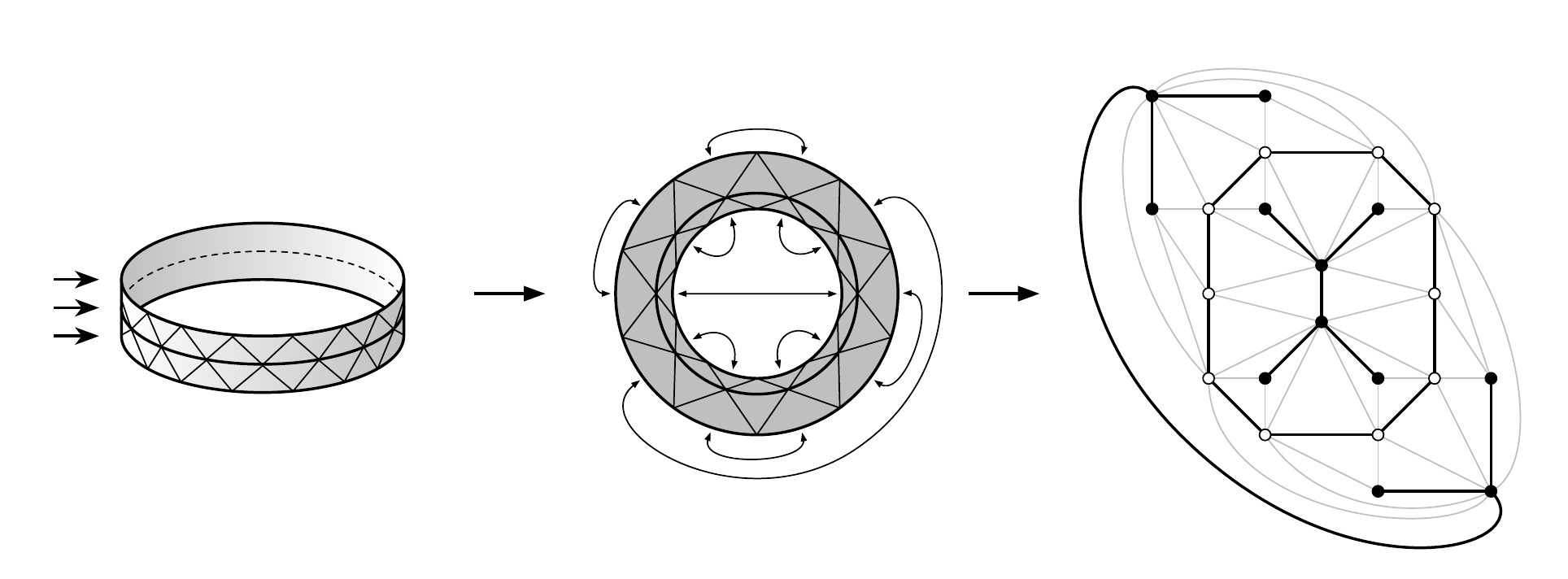}
	\caption{An illustration of the process of creating a surface of bounded degree from a pair of binary trees.  Start with three cycles, labeled red, green, and blue arranged on a cylinder as indicated on the left hand side, with the indicated edges.  We may view this cylinder as an annulus, as in the second figure, and consider a uniformly random way to glue the edges to each other on both the red and blue cycles to form independent random trees (the gluing is indicated with the arrows).  By welding the indicated edges, one gets the planar graph indicated in the third image, where the edges that were on the cycles are shaded more darkly than the edges interconnecting the cycles.  The vertices that were on the red and blue trees have been colored black, and the vertices of the cycle white.}
	\label{WeldFig}
\end{figure}

Given two leaf-rooted planar binary trees (trees where every node is either a leaf of degree one, or an internal node of degree three) chosen uniformly at random amongst the set of all such trees with $n$ internal nodes, we produce a surface as follows.  First note that the cycle of edges obtained by exploring counterclockwise around such a tree has length $2n+2$.  Thus, we may take the first random tree and use it to weld the red cycle (starting at their roots), and the other tree may then be used to similarly weld the blue cycle.  This process is illustrated in Figure~\ref{WeldFig}.  The result of this welding is a random triangulation of the sphere without loops or parallel edges.  The resulting triangulation has bounded degree since the two trees were of bounded degree.  Indeed, the fact that the we restricted to binary trees which have degree either one or three implies that every red and blue node has degree either three or nine, whereas every green node has exactly degree six. 

\begin{figure}[ht!]
	\includegraphics[width=2.12in]{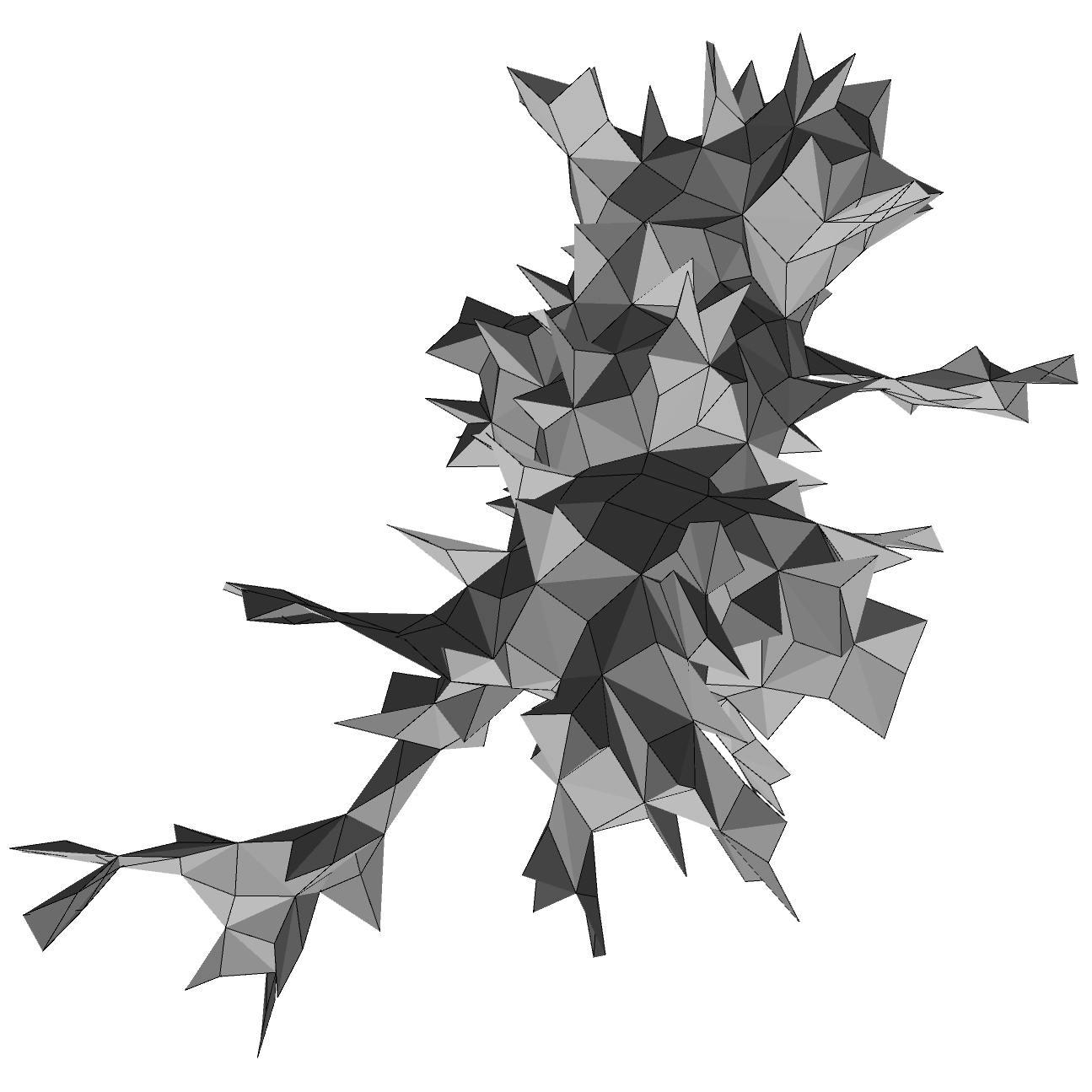}
	\includegraphics[width=2.12in]{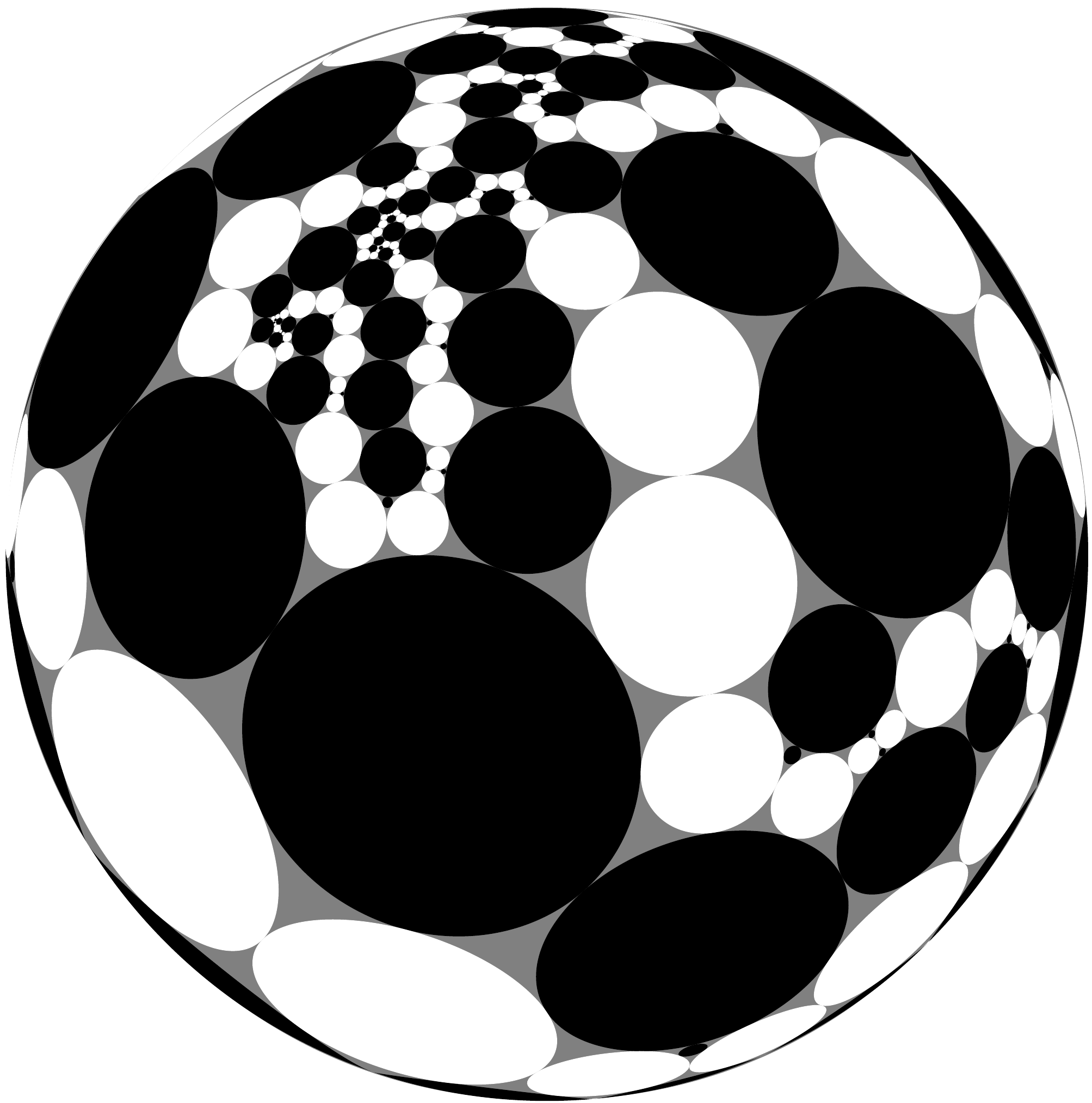}
	\includegraphics[width=2.12in]{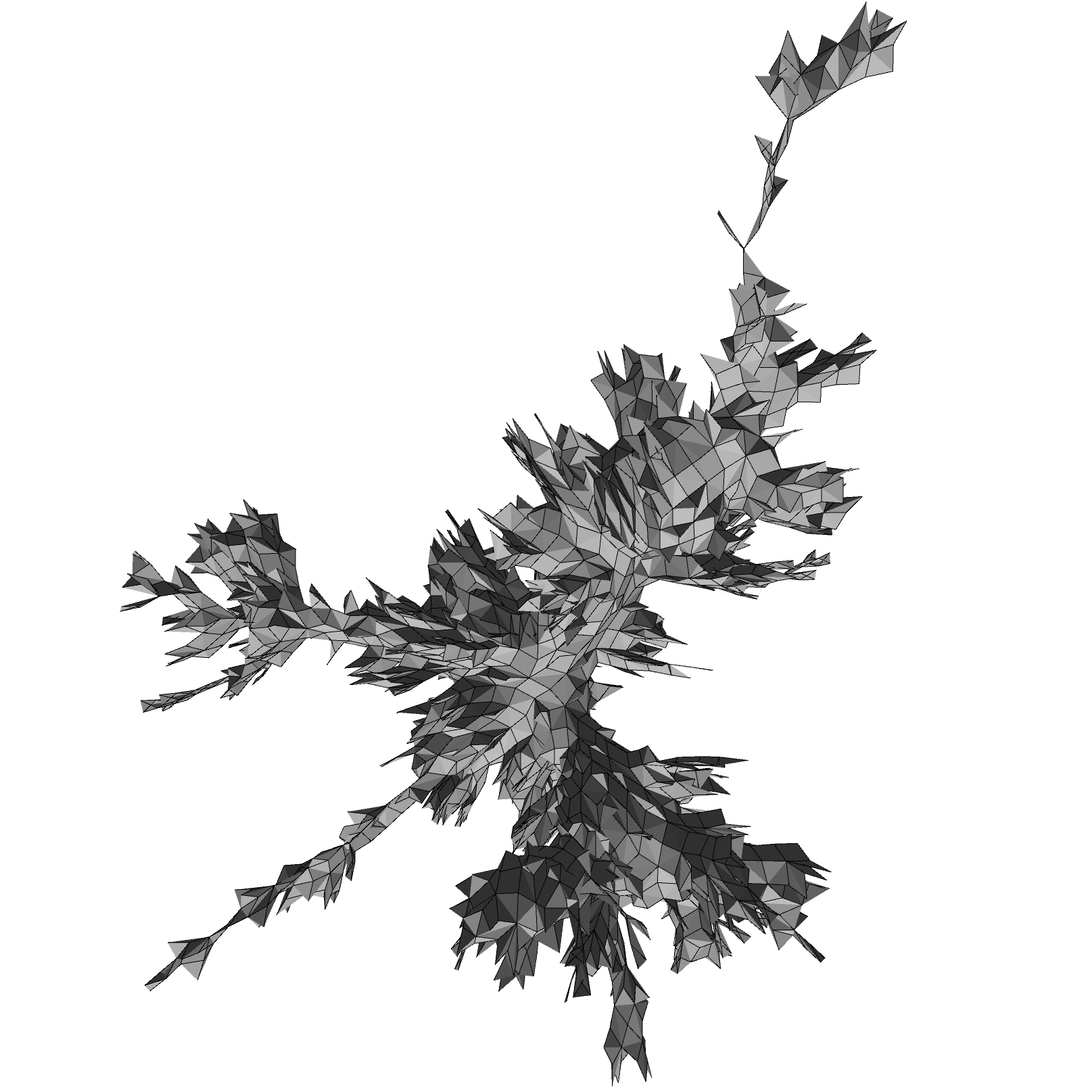}
	\caption{On the left is a small example of the type of random surface discussed above for $n=80$ ($644$ quadrilaterals) drawn approximately embedded in $\R^3$ using the author's own implementation of \cite{KKDraw}.  The center shows the same surface circle packed onto the sphere with the nodes on the trees colored black, and the nodes on the green cycle colored white.  The right image shows a much larger sample from the process with $n=200$ ($4004$ quadrilaterals). The circle packing was produced using CirclePack \cite{CPSoft}.}
	\label{SurfFig}
\end{figure}

Consider the following natural analog of the driving function topology.  For each edge of the green cycle there is a unique red vertex and blue vertex on the two triangular faces adjacent to the green edge.  Thus, for each green edge starting with the root edge, we may keep track of the distance of the adjacent red vertex in the red tree from the root red vertex (call this process $X_t$) and similarly the distance of the adjacent blue vertex from the root blue vertex (call this process $Y_t$).  The process $X_t$ is the contour process of the red rooted planar binary tree, and $Y_t$ the contour process of the blue rooted planar binary tree.  These trees were chosen independently and uniformly, thus to understand the process $(X_t,Y_t)$ we need only understand $X_t$.  However, the contour process of a uniformly random rooted planar binary tree (as it may be written as a conditioned Galton--Watson tree) is known to converge to a Brownian excursion \cite{CRT}.  Thus the process $(X_t,Y_t)$ converges to a Brownian excursion in the upper right quadrant, matching the $p=0$ model from \cite{HCSheff}.  This indicates that it is in the same universality class in the driving function topology and thus it is natural to conjecture that this model should tend to a $\sqrt{2}$-$\mathrm{LQG}$ \cite{QLE,CWSheff,HCSheff}. 

As a final note: these surfaces may be easily sampled via R\'emy's algorithm for constructing planar leaf-rooted binary trees \cite{Remy} where a uniformly random leaf-rooted planar binary tree with $n+1$ internal vertices may be generated from one with $n$ internal vertices by selecting a uniformly random side of an edge and attaching a new leaf at that point (see Figure~\ref{RemyFig}).

\begin{figure}[ht!]
	\includegraphics[width=4in]{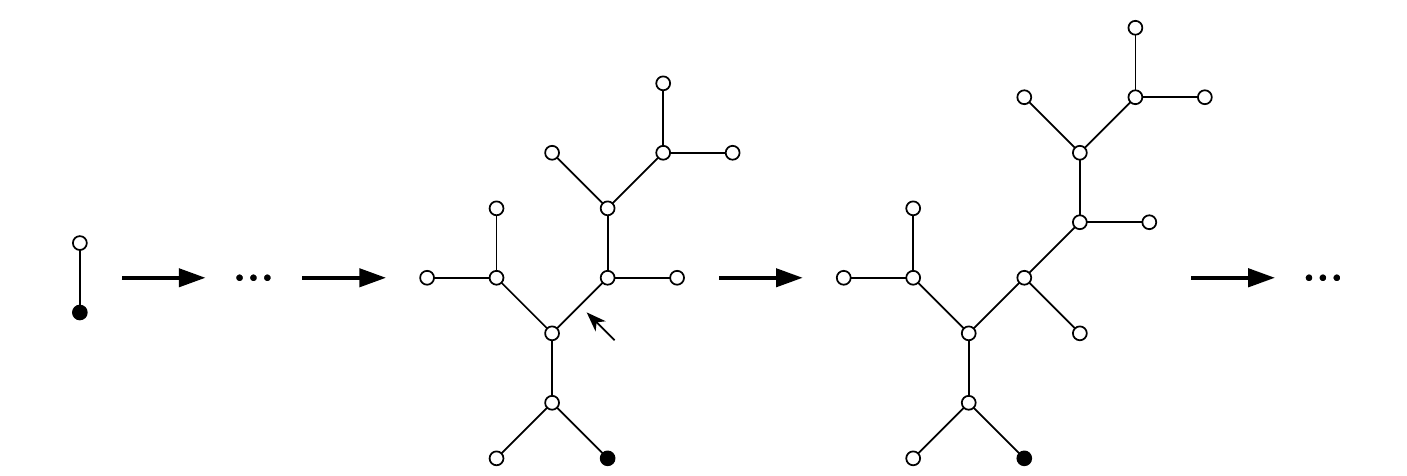}
	\caption{R\'emy's algorithm for constructing uniformly random planar leaf-rooted binary trees.  At each time step, a uniformly random side of an edge is selected and a new leaf is attached at that point.  The trees are considered as embedded trees modulo orientation preserving homeomorphisms of the plane.}
	\label{RemyFig}
\end{figure}

 A practical implementation of this algorithm can be found in \cite[Section 7.2.1.6]{Knuth}.  Additionally, this algorithm lifts to a natural growth procedure for the random surfaces described above by simply applying the growth step independently in each tree.  

One may use these trees to produce a uniformly $K$-round quadrangulation of the unit disk $\D$ in $\C$ as follows.  Pick three points uniformly at random from the surface and take the circle packing which sends these three points to circles centered on $0$, $1$, and $\infty$ in $\C$.  Convert the triangulation to an orthogonal quadrangulation as described in Section~\ref{MotSec}.  Restrict this packing to only those quadrilaterals which stay entirely within $\D$.  The resulting quadrangulation is orthogonal and uniformly $K$-round and thus by Corollary~\ref{PackCor}, we will have convergence in  Theorem~\ref{MainTheorem} as long as the resulting quadrangulations approximate $\D$.  This is currently unknown as it is currently unknown if the largest disk in such a packing will tend to zero in the limit (many such questions are open \cite{LeGallRev, HCSheff}), however conditional on this, we have convergence.

\section{Discrete holomorphicity}\label{DiscHoloSec}

Before proving our result, we first review the linear theory of discrete holomorphicity and harmonicity as provided by Skopenkov \cite{Skopenkov}, which we adopt in this paper.  Proofs of these results will be omitted and may be found in the original paper.  The main definition of discrete holomorphicity on a quadrilateral lattice is provided by a discrete form of the Cauchy-Riemann equations.

\begin{definition}
	A function $g:Q^* \rightarrow \C$ is \emph{discrete holomorphic} if for every face $f=[z_1z_2z_3z_4]$ we have
	\[
		\frac{g(z_1)-g(z_3)}{z_1-z_3} = \frac{g(z_2)-g(z_4)}{z_2-z_4}.
	\]
	A function $h:Q^* \rightarrow \R$ is \emph{discrete harmonic} if it is the real part of a discrete holomorphic function.
\end{definition}

It is worth making a few comments on this definition before continuing.  Note that the definition of discrete holomorphic may be viewed simply as a natural discretization that the difference quotient defining a single complex derivative is independent of the direction in which the difference is taken.  However, this is \emph{not} the same as making the definition that the difference quotient is independent at the \emph{vertices} of our quadrangulation as such a definition would either be over-determined (if all edges were used), or asymmetric under rotations of the lattice (if only a pair of edges were used).  While this definition does seem to provide a clean theory, the fact that the derivative is defined on faces rather than edges means that the discrete derivative is not discrete holomorphic in the same sense.  Discussion of this choice, in the special case of the square lattice, may be found in \cite{Duffin,SmirDisc}.    Additionally, note that the definition of discrete harmonicity is also not the standard definition that each value is the average of its neighbor's values on the graph, however we will see that it may be written in a similar form.

Finally, we should point out the natural role that the bipartition of $Q^* = Q^\bullet \sqcup Q^\circ$ plays in the theory.  Let $g$ be a discrete holomorphic function and $c\in \C$ be any complex number.  Then the function
\[
\tilde g(z) \ceq \begin{cases}
g(z) & z \in Q^\bullet, \\
g(z) + c & z \in Q^\circ,
\end{cases}
\]
is also discrete holomorphic as the definition only requires a relationship of the differences of the function on diagonal opposed vertices.  One may show that this is the only such ambiguity in the sense that if $g$ is defined on $Q^\bullet$ and may be extended to a holomorphic function on all of $Q^*$, then the extension is unique up to additive constant on $Q^\circ$.  This ambiguity leads to many results being best stated by restricting to one color of vertex alone (often black in this paper).

In the continuum, harmonicity may be described via the minimization of the \emph{Dirichlet energy}:
\[
E_\Omega(u) \ceq \int_\Omega |\nabla u|^2 \; dxdy.
\]
In the discrete such a definition may be given as well.  For a face $f=[z_1z_2z_3z_3]$ of $Q$, we will let the \emph{discrete gradient} of a function $u:Q^* \rightarrow \R$ be the unique complex number $\nabla_Q u(f)$ such that
\[
\nabla_Q u(f) \odot (z_3-z_1) = u(z_3) - u(z_1) \text{ and } \nabla_q u(f) \odot (z_4-z_2)  = u(z_4) - u(z_2).
\]
where we have adopted the notation that $(a+ib)\odot(c+id) = ac+bd$ denotes the dot product between two complex numbers regarded as vectors while we reserve $z\cdot w$ for their product as complex numbers.
We may now define the discrete Dirichlet energy.
\begin{definition}
	The \emph{discrete Dirichlet energy} is
	\[
	\E_Q(u) \ceq \sum_{f\in Q} |\nabla_Q u(f)|^2\cdot\area(f).
	\]
\end{definition}

As with the continuous Dirichlet energy, one may describe the condition of being discrete harmonic in term of the discrete Dirichlet energy.
\begin{lemma}[Convexity Principle $2.1$ \& Variation Principle $2.2$ \cite{Skopenkov}]
	The energy $E_Q(u)$ is a strictly convex functional on the affine space of functions with fixed values on $\bd Q$.  Moreover, $u$ is the unique minimizer for this energy with fixed values on $\bd Q$ if and only if $u$ is discrete harmonic.
\end{lemma}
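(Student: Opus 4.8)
The plan is to regard $\E_Q$ as an explicit positive semidefinite quadratic form in the values of $u$, read off convexity at once, promote it to strict convexity on each ``boundary slice'' by computing the kernel of the discrete gradient, and then identify the Euler--Lagrange equation of the constrained minimization with discrete harmonicity by means of a discrete Green's identity in which the orthogonality of the diagonals is essential.

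First I would observe that, because every face is non-degenerate, its two diagonals are $\R$-linearly independent, so the pair of linear equations defining $\nabla_Q u(f)$ has a unique solution and $u \mapsto \nabla_Q u(f)$ is $\R$-linear; since moreover $\area(f) > 0$, the functional $\E_Q(u) = \sum_f |\nabla_Q u(f)|^2\,\area(f)$ is a finite sum of non-negative quadratic forms, hence convex on $\R^{Q^*}$ and a fortiori on each affine subspace $\mathcal A_g \ceq \{u : u|_{\bd Q} = g\}$. For strict convexity on $\mathcal A_g$, suppose $u, v \in \mathcal A_g$ and $\E_Q$ is affine along the segment from $u$ to $v$; strict convexity of $w \mapsto |w|^2$ forces $\nabla_Q u(f) = \nabla_Q v(f)$ for every face, i.e. $\nabla_Q w \equiv 0$ for $w \ceq u - v$. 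By the definition of the gradient this says $w(z_1) = w(z_3)$ and $w(z_2) = w(z_4)$ on every face, so $w$ is constant along every diagonal. Since consecutive black neighbours of a white vertex, in the cyclic order around it, are the two opposite corners of the face between them and hence joined by a black diagonal, traversing an edge-path of $Q$ shows that the diagonals restricted to either colour class form a connected graph; as $\bd Q$ is an even cycle in the bipartite graph $Q$ it meets both colour classes, so $w|_{\bd Q} = 0$ forces $w \equiv 0$. The same computation shows $\E_Q$ is positive definite, hence coercive, on $\{u : u|_{\bd Q} = 0\}$, so the minimizer of $\E_Q$ over $\mathcal A_g$ exists and is unique.

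Next I would characterize that minimizer variationally. Expanding
\[
\E_Q(u + t\phi) = \E_Q(u) + 2t\sum_f \area(f)\,\nabla_Q u(f)\odot\nabla_Q\phi(f) + t^2\,\E_Q(\phi)
\]
over all $\phi$ vanishing on $\bd Q$ shows, using convexity, that $u \in \mathcal A_g$ is the minimizer if and only if the pairing $B(u,\phi) \ceq \sum_f \area(f)\,\nabla_Q u(f)\odot\nabla_Q\phi(f)$ vanishes for all such $\phi$. A discrete summation by parts is available here precisely because orthogonality makes everything diagonal: for an orthogonal face $\area(f) = \tfrac12|z_1-z_3|\,|z_2-z_4|$ and the two unit diagonals form an orthonormal frame in which $\nabla_Q$ decomposes, so $\E_Q$ is literally a Dirichlet energy, with positive conductances, on the graph $D$ whose vertex set is $Q^*$ and whose edges are the diagonals of the faces. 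Hence $B(u,\phi) = -\sum_v \phi(v)\,L_Q u(v)$, where $L_Q$ is the associated weighted Laplacian on $D$,
\[
L_Q u(v) = \sum_{f \ni v}\tfrac12\,\frac{\ell_\perp(f,v)}{\ell_\parallel(f,v)}\bigl(u(v_f) - u(v)\bigr),
\]
with $v_f$ the corner of $f$ opposite $v$ and $\ell_\parallel, \ell_\perp$ the lengths of the diagonal of $f$ through $v$ and of the other diagonal. Since $D$ couples only like-coloured vertices, $u$ is the minimizer on $\mathcal A_g$ if and only if $L_Q u$ vanishes at every interior vertex.

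Finally I would identify $\{L_Q u \equiv 0\}$ with discrete harmonicity. If $u = \Re g$ with $g = u + i\tilde u$ discrete holomorphic, then writing the discrete Cauchy--Riemann relation with $\lambda$ its common value on a face gives $\nabla_Q u(f) = \bar\lambda$ and $\nabla_Q\tilde u(f) = i\,\nabla_Q u(f)$; substituting the latter into the formula for $L_Q u(v)$ turns the sum over the faces around $v$ into a telescoping sum of increments of $\tilde u$ around the cycle of opposite-coloured corners encircling $v$, so it vanishes. Conversely, given $L_Q u \equiv 0$, the prescription ``the increment of $\tilde u$ along a diagonal of $f$ equals a fixed scalar multiple of the increment of $u$ along the other diagonal of $f$'', which again uses orthogonality, defines edge increments on $D$ whose sums around the link cycle of each interior vertex vanish exactly because $L_Q u$ does; since the bounded faces of each colour component of $D$ are precisely these link cycles and generate its cycle space (the carrier of $Q$ being simply connected), the increments integrate to a function $\tilde u$ on $Q^*$, defined up to additive constants on $Q^\bullet$ and $Q^\circ$, and $g \ceq u + i\tilde u$ then satisfies the discrete Cauchy--Riemann equation on every face, so $u$ is discrete harmonic. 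Combining the three steps proves the statement. The hard part is this last equivalence: one must set up the Laplacian and the ``rotated-gradient'' increment form so that the summation by parts is an exact identity and so that the vanishing of $L_Q u$ around an interior vertex is literally the closedness condition that, by simple-connectedness, yields the conjugate; the connectivity claim used for strict convexity is only a short combinatorial lemma, but it must be argued with care since diagonals join only like-coloured vertices.
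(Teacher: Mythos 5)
The paper does not actually prove this lemma: it is quoted from Skopenkov with the explicit remark that ``proofs of these results will be omitted and may be found in the original paper,'' so there is no in-paper argument to compare against. Judged on its own, your proposal follows the standard route (and, as far as I can tell, essentially Skopenkov's): convexity from the quadratic-form structure of $E_Q$, strict convexity from the kernel computation of $\nabla_Q$ together with connectivity of the two diagonal graphs and the fact that $\bd Q$ meets both colour classes, the Euler--Lagrange identification of the minimizer with the vanishing of $\Delta_Q u$ at interior vertices (your $L_Q$ is $\tfrac12\Delta_Q$ in the paper's normalization, which is harmless), and the construction of a discrete harmonic conjugate by integrating the rotated gradient $i\nabla_Q u$ along diagonals. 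I find no genuine gap, but the two combinatorial facts you lean on are the thinnest points and should not be waved through: (i) that the black (resp.\ white) diagonal graph is connected, and (ii) that its bounded faces are exactly the link cycles of the interior vertices of the opposite colour, hence generate its cycle space. Both are true for a quadrangulation of a disk, but (ii) is best justified by an Euler-characteristic count --- the cycle rank of the white diagonal graph is $F - V_\circ + 1$, which equals the number of interior black vertices precisely because the boundary cycle alternates colours, so the (pairwise distinct, independent) interior link cycles exhaust the bounded faces --- rather than by an appeal to simple connectedness alone. With those two points filled in, the argument is complete.
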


In particular this immediately ensures existence of a unique solution to the Dirichlet problem.

\begin{corollary}[Existence and Uniqueness Theorem 1.1 \cite{Skopenkov}] 
	The Dirichlet problem on any finite quadrilateral lattice has a unique solution.
\end{corollary}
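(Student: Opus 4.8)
\emph{Proof proposal.} The plan is to read the corollary off the two principles quoted in the preceding lemma; the only point that needs genuine care is turning strict convexity into the existence of a minimizer. Fix a finite quadrilateral lattice $Q$ and prescribed boundary values $g_0 : \bd Q \to \R$. The set $\ann_{g_0}$ of functions $u : Q^* \to \R$ with $u|_{\bd Q} = g_0$ is an affine subspace of the finite-dimensional vector space $\R^{Q^*}$, with direction space the functions $\ann_0$ vanishing on $\bd Q$; its dimension is the number of interior vertices of $Q$.

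First I would verify that $\E_Q$ is a non-negative quadratic functional on $\R^{Q^*}$. For a face $f = [z_1z_2z_3z_4]$, the two equations defining $\nabla_Q u(f)$ pair it against the diagonal vectors $z_3 - z_1$ and $z_4 - z_2$, which are linearly independent because $Q$ is orthogonal; hence $\nabla_Q u(f)$ is the unique solution of a linear system and depends linearly on $u(z_1),\dots,u(z_4)$, so $|\nabla_Q u(f)|^2\,\area(f)$ is a non-negative quadratic form in $u$ (with $\area(f) > 0$ since faces are non-degenerate). Summing over the finitely many faces, $\E_Q$ is a non-negative quadratic form, and its restriction to $\ann_{g_0}$ is a quadratic functional (a positive semidefinite form plus an affine term), in particular continuous and bounded below.

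Next I would apply the Convexity Principle: $\E_Q$ is strictly convex on $\ann_{g_0}$. For a quadratic functional on a finite-dimensional affine space, strict convexity is equivalent to positive-definiteness of the associated quadratic form on $\ann_0$, which forces $\E_Q(u) \to +\infty$ as $\|u\| \to \infty$ within $\ann_{g_0}$; a continuous coercive function on a finite-dimensional space attains its infimum, and strict convexity makes the minimizer unique. Denote it $u^\star$. By the Variation Principle, $u^\star$ is discrete harmonic, hence solves the Dirichlet problem with data $g_0$; conversely any solution is a minimizer of $\E_Q$ over $\ann_{g_0}$ (again by the Variation Principle) and so coincides with $u^\star$. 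This yields existence and uniqueness.

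The step I expect to carry the weight is the passage from strict convexity to existence of a minimizer: strict convexity on a non-compact affine space need not produce a minimum (as $e^x$ on $\R$ shows), and what rescues the argument is precisely that $\E_Q$ is quadratic, so strict convexity upgrades to coercivity. Everything else is bookkeeping --- chiefly the linear-algebra check that $\nabla_Q u(f)$, and therefore $\E_Q$, really is quadratic in $u$ --- followed by a direct invocation of the quoted principles; no hypothesis beyond finiteness of $Q$ (and the orthogonality already standing in force) enters.
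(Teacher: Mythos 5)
Your proposal is correct and follows exactly the route the paper intends: the paper states the corollary as an immediate consequence of the Convexity and Variation Principles (deferring details to Skopenkov), and your argument is precisely that deduction with the one genuinely non-trivial step --- upgrading strict convexity of the quadratic functional to coercivity on the finite-dimensional affine space so that a minimizer exists --- correctly identified and filled in.
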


In the case of orthogonal lattices, which we assume throughout, the expression for the energy may be made more explicit:
\[
E_Q(u) = \frac{1}{2}\sum_{f=[z_1z_2z_3z_4] \in Q}\bigg[\frac{|z_2-z_4|}{|z_1-z_3|}(u(z_3)-u(z_1))^2+\frac{|z_1-z_3|}{|z_2-z_4|}(u(z_4)-u(z_2))^2\bigg].
\]

Again in analogy with the continuum, the condition of harmonicity may instead be expressed in terms of vanishing of a Laplacian which in this case takes the form:
\[
\Delta_Qu(z) \ceq -\frac{\bd E_Q(u)}{\bd u(z)} = \sum_{\substack{f=[z_1z_2z_3z_4]\in Q\\ z_1 = z}}\frac{|z_2-z_4|}{|z_1-z_3|}(u(z_3)-z(z_1)).
\]
When interpreting expressions containing the Laplacian in the future, it is worth while to note that this expression should be considered as already weighted by the area.  Thus, expressions which are integrals of the Laplacian in the continuum become sums of the discrete laplacian, and do not require further weighting, in contrast to the discrete gradient, which \emph{does} require weighting by the area, as in the definition of the discrete Dirichlet energy.  In this way, $\Delta_Qu(z)$ should be thought of as discretizing the $2$-form $\Delta u(x+iy) \;dxdy$ rather than $\Delta u(z)$.

From this expression, one may derive the following more geometric description of the Laplacian which will be used frequently in what follows.
\begin{lemma}[Lemma 3.3 \cite{Skopenkov}]\label{LapExpressLem}
	For each $z \in Q^*$, we have
	\[
	\Delta_Qu(z) = \sum_{\substack{f=[z_1z_2z_3z_4]\in Q\\ z_1 = z}} (i\nabla_Qu(f))\odot(z_2-z_4).
	\]
\end{lemma}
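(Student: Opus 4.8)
The statement is a per-face identity in disguise, so the plan is to fix a face $f=[z_1z_2z_3z_4]$ incident to $z$ with $z_1=z$ and show that its contribution to the right-hand side equals its contribution to $\Delta_Q u(z)$ as written in the displayed formula just above the lemma, namely
\[
\bigl(i\nabla_Q u(f)\bigr)\odot(z_2-z_4)\;=\;\frac{|z_2-z_4|}{|z_1-z_3|}\bigl(u(z_3)-u(z_1)\bigr);
\]
summing over the faces with $z_1=z$ then yields the lemma. Throughout I use the convention of \cite{Skopenkov} for the cyclic ordering of the vertices of a face (with $z_1,z_3$ the two vertices sharing the colour of $z$); the argument only uses that this ordering is fixed, and it is exactly this choice that pins the sign below.

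The single geometric input is orthogonality: the diagonal vectors $z_3-z_1$ and $z_2-z_4$ are perpendicular, so a right-angle rotation carries $z_2-z_4$ onto a real multiple of $z_3-z_1$. Taking moduli identifies that multiple up to sign, and the orientation convention fixes the sign, so that
\[
-i(z_2-z_4)\;=\;\frac{|z_2-z_4|}{|z_1-z_3|}\,(z_3-z_1).
\]
Now I would combine this with the elementary fact that a right-angle rotation preserves the real dot product, i.e.\ $(iv)\odot w=v\odot(-iw)$ for all $v,w\in\C$, applied with $v=\nabla_Q u(f)$ and $w=z_2-z_4$:
\[
\bigl(i\nabla_Q u(f)\bigr)\odot(z_2-z_4)\;=\;\nabla_Q u(f)\odot\bigl(-i(z_2-z_4)\bigr)\;=\;\frac{|z_2-z_4|}{|z_1-z_3|}\Bigl(\nabla_Q u(f)\odot(z_3-z_1)\Bigr).
\]
Finally the defining relation of the discrete gradient gives $\nabla_Q u(f)\odot(z_3-z_1)=u(z_3)-u(z_1)$, so the right-hand side is precisely the $f$-term of $\Delta_Q u(z)$, and the per-face identity is proved.

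The computation is entirely elementary; the only point demanding care is the sign in the second display — that the appropriate right-angle rotation of $z_2-z_4$ lands on $+(z_3-z_1)$ rather than $-(z_3-z_1)$ under the face-labelling convention. I would verify this once on a reference square and then observe that it propagates to every orthogonal face of a consistently oriented lattice, since the sign is governed by the (fixed) orientation of the face and is unaffected by the only genuine ambiguity in the labelling, the swap $[z_1z_2z_3z_4]\mapsto[z_3z_4z_1z_2]$, under which both sides of the per-face identity change sign together. I do not anticipate any further obstacle.
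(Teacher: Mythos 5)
Your argument is correct, and there is nothing in the paper to compare it against: this lemma is quoted from Skopenkov with the explicit remark that ``proofs of these results will be omitted and may be found in the original paper.'' Your per-face reduction --- using $(iv)\odot w = v\odot(-iw)$, orthogonality of the diagonals to write $-i(z_2-z_4)$ as $\tfrac{|z_2-z_4|}{|z_1-z_3|}(z_3-z_1)$, and the defining relation $\nabla_Q u(f)\odot(z_3-z_1)=u(z_3)-u(z_1)$ --- is exactly the computation the citation points to, and you are right that the only delicate point is the sign, which is pinned by the (unstated in this paper) orientation convention for the labelling $[z_1z_2z_3z_4]$ and is correctly observed to be consistent under the residual relabelling $[z_1z_2z_3z_4]\mapsto[z_3z_4z_1z_2]$.
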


With these definitions, many of the familiar identities from the continuous theory may be translated to the discrete theory.  We will require the Green's identity and maximal principle.

\begin{lemma}[Lemma 3.7 \cite{Skopenkov}]\label{GreenLem}
	Let $Q$ be an orthogonal lattice and $u,v : Q^\bullet \rightarrow \C$ be arbitrary functions.  Then
	\[
	\sum_{z \in Q^\bullet} [u\nabla_Qv - v\nabla_Q u] = 0.
	\]
\end{lemma}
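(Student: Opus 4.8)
The plan is to prove this identity the way one proves self-adjointness of a discrete Laplacian: expand the vertex sum into a sum over faces, use the defining relations of the discrete gradient together with orthogonality of the diagonals to rewrite each face's contribution as a \emph{symmetric} bilinear expression in the values of $u$ and $v$, and then observe that the antisymmetric combination $u\nabla_Q v - v\nabla_Q u$ cancels face by face.

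First I would interchange the order of summation so that $\sum_{z\in Q^\bullet}[u\nabla_Q v - v\nabla_Q u]$ becomes a sum over the quadrilateral faces $f=[z_1z_2z_3z_4]$ of $Q$, with $z_1,z_3\in Q^\bullet$ and $z_2,z_4\in Q^\circ$. Each geometric face enters through its two black corners $z_1$ and $z_3$, and the heart of the bookkeeping is that it enters with opposite orientation through the two corners, so the relevant geometric weight (a ratio of diagonal lengths, as in the explicit orthogonal-lattice expression for $E_Q$ recalled above) is the same while the diagonal direction flips sign.

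Next, for a fixed face $f$ I would make the contraction defining the vertex-gradient $(\nabla_Q w)(z)$ explicit. This is where orthogonality enters: because $(z_3-z_1)\odot(z_4-z_2)=0$, the component of $\nabla_Q w(f)$ relevant to the identity can be resolved using only the relation $\nabla_Q w(f)\odot(z_3-z_1)=w(z_3)-w(z_1)$, so that the white-corner data $w(z_2),w(z_4)$ drops out entirely and the per-face expression for $\sum_{z\in Q^\bullet}u\nabla_Q v$ collapses (up to sign) to $c_f\,(u(z_3)-u(z_1))(v(z_3)-v(z_1))$ for a real weight $c_f$ depending only on the shape of $f$. This is manifestly symmetric under $u\leftrightarrow v$, so the antisymmetric combination $u\nabla_Q v - v\nabla_Q u$ contributes exactly $0$ on $f$; summing over all faces gives the identity, and since the cancellation is internal to each face no boundary term survives the fact that $Q$ is finite with a boundary.

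The step I expect to be the main obstacle is the bipartite bookkeeping combined with pinning down exactly which contraction of $\nabla_Q w(f)$ appears in $(\nabla_Q w)(z)$: one must check that every face is counted once through each of its two black corners with the correct opposite signs, and that the white-corner values genuinely cancel — which is precisely where the orthogonality hypothesis is used, and the reason the lemma is restricted to orthogonal lattices. Once those points are settled the remainder is the routine "symmetric bilinear form $\Rightarrow$ antisymmetrization vanishes" computation; alternatively one could package the same content by polarizing the discrete Dirichlet energy $E_Q$ and identifying the vertex-gradient contractions with the resulting symmetric pairing via Lemma~\ref{LapExpressLem}.
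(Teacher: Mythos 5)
Your proof is correct, but note that the paper itself gives no proof of this lemma: it is quoted verbatim from Skopenkov (Lemma 3.7 there) and the paper explicitly omits the proofs in Section~\ref{DiscHoloSec}, so there is nothing internal to compare against. Your argument is the standard one and is consistent with the paper's own definitions: as printed the statement must be read with $\Delta_Q$ in place of $\nabla_Q$ (the gradient is a face quantity, and the application in Proposition~\ref{HarmConvProp} uses the Laplacian), and you correctly resolve this by contracting the face gradient as in Lemma~\ref{LapExpressLem}. Orthogonality of the diagonals then makes the white values drop out, each face's two black corners together contribute $-\tfrac{|z_2-z_4|}{|z_1-z_3|}\,(u(z_3)-u(z_1))(v(z_3)-v(z_1))$, which is symmetric in $u$ and $v$, and the antisymmetric combination vanishes face by face with no boundary term --- exactly the polarization of the explicit orthogonal-lattice formula for $E_Q$ recalled in the paper. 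Your flagged ``main obstacle'' (each face counted once through each black corner, with the weight invariant and the diagonal difference changing sign under $z_1\leftrightarrow z_3$) is handled correctly.
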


\begin{lemma}[Maximum Principle 3.5 \cite{Skopenkov}]\label{MaxLem}
	Let $Q$ be an orthogonal lattice and let $u:Q^* \rightarrow \R$ be discrete harmonic.  Then
	\[
		\max_{z \in Q^*} u(z) = \max_{z \in Q^* \cap \bd Q} u(z) \ \text{ and } \ \max_{z \in Q^\bullet} u(z) = \max_{z \in Q^\bullet \cap \bd Q} u(z).
	\]
\end{lemma}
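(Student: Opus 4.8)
The plan is to establish the statement for black vertices first and then observe that the same argument, applied verbatim to white vertices, gives the other half; the two together yield the equality over all of $Q^*$. The starting point is the explicit formula for the Laplacian displayed just before Lemma~\ref{LapExpressLem}. For an interior vertex $z \in Q^\bullet$, discrete harmonicity means $\Delta_Q u(z) = 0$, i.e.
\[
\sum_{\substack{f=[z_1z_2z_3z_4]\in Q\\ z_1 = z}} \frac{|z_2-z_4|}{|z_1-z_3|}\bigl(u(z_3)-u(z)\bigr) = 0,
\]
and every coefficient $|z_2-z_4|/|z_1-z_3|$ is strictly positive. Hence $u(z)$ is a convex combination of the values $u(z_3)$ as $f$ ranges over the faces at $z$, and each such $z_3$, being the vertex diagonally opposite $z$ in $f$, again lies in $Q^\bullet$. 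Here one first checks that the face-labeling convention is symmetric under the cyclic relabeling $z_1\mapsto z_2\mapsto z_3\mapsto z_4\mapsto z_1$, so that the relation $-\partial E_Q(u)/\partial u(z) = 0$ holds equally at interior \emph{white} vertices; this is immediate from the form of $E_Q$ in the orthogonal case.

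Next I run the standard propagation argument. Put $M \ceq \max_{z \in Q^\bullet} u(z)$ and $A \ceq \{z \in Q^\bullet : u(z) = M\}$, which is nonempty since $Q$ is finite. If $z \in A$ is interior, then in the identity above every summand $u(z_3)-u(z)$ is $\le 0$ while the total vanishes, so $u(z_3) = M$ for every diagonally opposite $z_3$; that is, every ``diagonal neighbour'' of $z$ also lies in $A$.

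To finish I must show $A$ meets the boundary. Introduce the graph $G^\bullet$ on the vertex set $Q^\bullet$ whose edges join pairs of black vertices that are diagonally opposite in some face. I claim $G^\bullet$ is connected whenever $Q$ is: given a path in $Q$ between two black vertices, replace each length-two subpath $b - w - b'$ by a path in $G^\bullet$, using that the black neighbours of a fixed white vertex $w$, listed in rotational order around $w$, have each consecutive pair forming a diagonal of one of the bounded faces at $w$ — so all black neighbours of $w$ lie in a single $G^\bullet$-component, and in particular $b$ and $b'$ do. Now suppose for contradiction that $A$ contains no boundary vertex. Then every vertex of $A$ is interior, so by the previous paragraph all of its $G^\bullet$-neighbours lie in $A$; thus $A$ is a union of connected components of $G^\bullet$, and by connectedness $A = Q^\bullet$. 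But a finite quadrilateral lattice has $Q^\bullet \cap \bd Q \neq \varnothing$ (the outer-face boundary walk alternates colours), contradicting the assumption. Hence $A$ contains some $b \in Q^\bullet \cap \bd Q$, and $\max_{Q^\bullet} u = u(b) \le \max_{Q^\bullet \cap \bd Q} u \le \max_{Q^\bullet} u$, proving the black case; the white case is identical, and combining the two gives the first equality over all of $Q^*$.

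The main obstacle here is not analytic but combinatorial-topological: verifying that the diagonal graph $G^\bullet$ (and its white counterpart) is connected and that the boundary vertices are genuinely reachable, so that the strong-maximum-principle propagation cannot become trapped in the interior, together with the bookkeeping that pins down the labeling conventions making $\Delta_Q$ an honest positive-weight averaging operator at interior vertices of \emph{both} colours. Once those points are settled, the remainder is the textbook discrete maximum principle.
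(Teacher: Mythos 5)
The paper does not actually prove this lemma---it is quoted verbatim from Skopenkov (Maximum Principle 3.5) with the proof explicitly omitted---so there is no in-paper argument to compare against; your proof is the standard discrete strong-maximum-principle propagation and is correct. The positivity of the weights $|z_2-z_4|/|z_1-z_3|$ does make $\Delta_Q u(z)=0$ an averaging identity over the diagonally opposite (same-coloured) vertices, and your fan-of-faces argument for the connectivity of the diagonal graph $G^\bullet$ is the right way to force the maximal set to reach $\bd Q$. The only point to flag is that connectivity of the star of faces around a boundary white vertex (and hence of $G^\bullet$) tacitly assumes the lattice has no pinch points, a standing assumption in this setting rather than a gap in your argument.
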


\section{Geometric preliminaries}

Much of the results presented in this paper will rest on our ability to compare the geometry of paths within the lattice to near-by continuous paths in a uniform way.  We will first require a pair of additional local estimates in the form of the lemma below that tells us that essentially all of the local geometry of a single quadrilateral in a $K$-round quadrilateral lattice is controlled by $K$.

\begin{lemma}\label{MoreUnifLem}
	If a quadrilateral $f = [z_1z_2z_3z_4]$ is $K$-round then the lengths of the edges are bounded below by $\diam(f)/(2K)$ and the lengths of diagonals are bounded below by $\diam(f)/(4K^2)$.
\end{lemma}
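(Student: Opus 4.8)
The plan is to reduce everything to two elementary facts about simple polygons: that the diameter of $f$ is realized between two of its vertices, and that the interior angles of a simple quadrilateral sum to $2\pi$. (The face $f$ is indeed a simple polygon, being a bounded face of a straight-line planar embedding.)

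For the edge bound I would argue as follows. Since the diameter of a bounded planar set equals that of its convex hull, whose extreme points lie among $z_1,\dots,z_4$, we have $\diam(f)=\max_{i\ne j}|z_i-z_j|$. If this maximum is attained on an edge it equals the longest edge length $L$; if it is attained on a diagonal, say $|z_1-z_3|$, the triangle inequality through $z_2$ gives $|z_1-z_3|\le|z_1-z_2|+|z_2-z_3|\le 2L$. So $\diam(f)\le 2L$ in all cases, while $K$-roundness gives $L<K\ell$ with $\ell$ the shortest edge length; hence every edge has length $\ge\ell>\diam(f)/(2K)$, which is the first assertion.

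For the diagonal bound, by the previous step it suffices to show each diagonal has length at least $(4/K)\ell$, since then it exceeds $(4/K)\cdot\diam(f)/(2K)=2\diam(f)/K^2\ge\diam(f)/(4K^2)$. Fixing a diagonal, say $z_1z_3$, I would work in the triangle $z_1z_2z_3$ and first establish that its angle $\alpha\in(0,\pi)$ at $z_2$ satisfies $\alpha\ge 2\pi/K$: writing $\theta\in(0,2\pi)$ for the interior angle of $f$ at $z_2$ one has $\alpha=\min(\theta,2\pi-\theta)$, and while $\theta\ge 2\pi/K$ by hypothesis, the crucial extra input is that the other three interior angles are also $\ge 2\pi/K$ and the four sum to $2\pi$, forcing $\theta\le 2\pi-6\pi/K$ and hence $2\pi-\theta\ge 6\pi/K\ge 2\pi/K$. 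With this angle bound, the law of cosines applied to $z_1z_2z_3$ with $a=|z_1-z_2|\ge\ell$, $b=|z_2-z_3|\ge\ell$ — split into the case $\alpha\le\pi/2$, where $2ab\le a^2+b^2$ and $\cos\alpha\ge 0$ yield $|z_1-z_3|^2\ge 2\ell^2(1-\cos\alpha)=4\ell^2\sin^2(\alpha/2)$, and the case $\alpha>\pi/2$, where $|z_1-z_3|^2\ge a^2+b^2\ge 2\ell^2$; using $K\ge 4$ so that $2\pi/K\le\pi/2$ and sine is increasing on the relevant ranges — gives $|z_1-z_3|\ge 2\ell\sin(\pi/K)$. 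Then Jordan's inequality $\sin x\ge 2x/\pi$ on $[0,\pi/2]$ finishes with $|z_1-z_3|\ge 4\ell/K$, and the same argument applies to $z_2z_4$.

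The one step that genuinely requires care is the angle bound $\alpha\ge 2\pi/K$: because $K$-roundness only lower-bounds interior angles, a non-convex face could a priori have a reflex angle at $z_2$ very close to $2\pi$, at which point the triangle angle $2\pi-\theta$ degenerates toward $0$ and the diagonal could in principle be tiny; ruling this out is precisely where the angle-sum identity for simple quadrilaterals enters. Everything after that is routine trigonometry, and I expect the stated constants can in fact be sharpened.
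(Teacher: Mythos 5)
Your proof is correct and follows essentially the same route as the paper: both establish the edge bound by noting that $\diam(f)$ is realized between two vertices joined by either one edge or two, and both obtain the diagonal bound by doing trigonometry in the triangle $z_1z_2z_3$ with the side lengths bounded below by the shortest edge. The only real difference is that you use the law of cosines where the paper uses a one-line law-of-sines computation, $|z_1-z_3| = \frac{\sin(\angle z_1z_2z_3)}{\sin(\angle z_2z_3z_1)}|z_1-z_2| \ge \frac{|z_1-z_2|}{2K}$. What your write-up adds, and what the paper's proof leaves implicit, is the justification that the triangle's angle at $z_2$ is bounded below by $2\pi/K$ even when the face is non-convex: $K$-roundness only bounds the \emph{interior} angle $\theta$ from below, and at a reflex vertex the relevant triangle angle is $2\pi-\theta$, which you control via the angle-sum identity $\sum\theta_i = 2\pi$ for a simple quadrilateral. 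That step is genuinely needed for the lemma as stated (the paper allows non-convex faces), so your more careful version is the one I would keep; the constants you obtain are at least as good as those claimed.
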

\begin{proof}
	Since $f$ is a quadrilateral, the diameter is achieved between a pair of vertices.  These vertices are either connected by an edge, or a pair of edges.  If they are connected by an edge, then the lengths of all edges are are bounded below by $\diam(f)/K$ by definition of $K$-round.  If they are connected by two edges, then at least one edge has length at least $\diam(f)/2$, and thus we obtain a bound of $\diam(f)/(2K)$.
	
	Now consider the diagonal $z_1,z_3$ of $f$.  By the law of sines, we know that 
	\[
	|z_1-z_3| = \frac{\sin(\angle z_1z_2z_3)}{\sin(\angle z_2z_3z_1)}|z_1-z_2| \ge \frac{|z_1-z_2|}{2K} \ge \frac{\diam(f)}{4K^2}.
	\]
\end{proof} 

Additionally, we may control the area of such quadrilaterals in terms of their diameter.
\begin{lemma}\label{AreaLem}
	There exists a $C_K$, depending only on $K$, so that if $f = [z_1z_2z_3z_4]$ is s $K$-round quadrilateral, then $\area(f) \ge C_K \diam(f)^2$.
\end{lemma}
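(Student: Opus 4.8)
The plan is to bound the area of a $K$-round quadrilateral $f=[z_1z_2z_3z_4]$ from below by a constant multiple of $\diam(f)^2$ by exhibiting an explicit triangle inside $f$ whose area is comparable to $\diam(f)^2$.

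First I would normalize: by scaling we may assume $\diam(f) = 1$, so it suffices to produce a uniform lower bound on $\area(f)$. Since $f$ is a quadrilateral, the diameter $1$ is realized by a pair of vertices. These vertices are either diagonally opposite or adjacent. In either case, by Lemma~\ref{MoreUnifLem} all four edges have length at least $1/(2K)$ and both diagonals have length at least $1/(4K^2)$. Next I would observe that the quadrilateral $f$ decomposes along a diagonal, say $[z_1z_3]$, into the two triangles $[z_1z_2z_3]$ and $[z_1z_4z_3]$ (this uses only that $f$ is a non-degenerate quadrilateral, possibly non-convex; for a non-convex quadrilateral one of these triangles may stick out, but since the boundary of $f$ is a simple closed polygon, the signed areas still combine so that $\area(f)$ equals the sum of the two triangle areas when the diagonal $[z_1z_3]$ lies inside $f$, and one should instead use whichever of the two diagonals is an interior chord — at least one of them is). So $\area(f) \ge \area([z_1z_2z_3])$ for an appropriate labeling.

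Then I would bound the area of that triangle from below. Write $\area([z_1z_2z_3]) = \tfrac12 |z_1-z_2|\,|z_2-z_3|\,\sin(\angle z_1z_2z_3)$. The two edge lengths are each at least $1/(2K)$ by Lemma~\ref{MoreUnifLem}, and the interior angle $\angle z_1z_2z_3$ is at least $2\pi/K$ by the definition of $K$-roundness; moreover $\angle z_1z_2z_3$ is an interior angle of the quadrilateral, hence strictly less than $2\pi$, but more usefully the sum of the interior angles of a simple quadrilateral is $2\pi$, so each interior angle is at most $2\pi - 3\cdot(2\pi/K) = 2\pi(1 - 3/K)$, which is bounded away from $2\pi$ (recall $K \ge 4$). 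Hence $\sin(\angle z_1z_2z_3) \ge \min(\sin(2\pi/K), \sin(2\pi(1-3/K))) =: s_K > 0$, a positive constant depending only on $K$. Therefore $\area(f) \ge \tfrac12 \cdot \tfrac1{2K}\cdot\tfrac1{2K}\cdot s_K = \tfrac{s_K}{8K^2}$, and undoing the normalization gives $\area(f) \ge C_K \diam(f)^2$ with $C_K = s_K/(8K^2)$.

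The main obstacle is the bookkeeping around non-convexity: one must be careful that the chosen triangle genuinely lies inside $f$ (equivalently, that the chosen diagonal is an interior chord) so that $\area(f)$ really dominates the triangle's area rather than being a difference of two overlapping pieces. I would handle this by noting that for any simple quadrilateral at least one of the two diagonals is an interior chord, and selecting that one; the angle at the appropriate vertex is then an honest interior angle of $f$ to which the $K$-roundness bound applies. Everything else is the elementary trigonometry sketched above.
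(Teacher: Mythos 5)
Your overall strategy --- cut $f$ along an interior diagonal and bound the area of one of the resulting triangles by $\tfrac12 ab\sin\theta$, with the edge lengths controlled by Lemma~\ref{MoreUnifLem} and the angle controlled by $K$-roundness --- is the same as the paper's. However, the step in which you bound the sine fails. You assert that $\sin(\angle z_1z_2z_3)\ge\min\bigl(\sin(2\pi/K),\sin(2\pi(1-3/K))\bigr)=:s_K>0$ because the interior angle lies in the interval $[2\pi/K,\;2\pi(1-3/K)]$. For every $K\ge 6$ the right endpoint of that interval is at least $\pi$, so the interval contains $\pi$, where $\sin$ vanishes (and for $K\ge 7$ it contains angles with negative sine); hence $s_K\le 0$ and the minimum of $\sin$ over the interval is not attained at an endpoint --- that reasoning is only valid for a monotone function or, here, for a subinterval of $[0,\pi]$ where concavity applies. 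The gap is geometric, not just formal: nothing in the definition of $K$-roundness prevents the interior angle at your chosen vertex $z_2$ from equalling or approaching $\pi$, in which case the triangle $[z_1z_2z_3]$ is degenerate or arbitrarily thin and $\area(f)\ge\area([z_1z_2z_3])$ gives no lower bound at all.

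The repair is to choose the triangle, not only the diagonal. The four interior angles sum to $2\pi$ and each is at least $2\pi/K$, so the two vertices \emph{off} the chosen diagonal have angle sum at most $2\pi-4\pi/K$; at least one of them therefore has interior angle in $[2\pi/K,\;\pi-2\pi/K]$, an interval on which $\sin\ge\sin(2\pi/K)$ since $K\ge 4$. Using the triangle with that apex (and noting that in the non-convex case the reflex vertex lies on the interior diagonal, so both off-diagonal angles already satisfy this) gives $\area(f)\ge\tfrac{1}{8K^2}\sin(2\pi/K)\,\diam(f)^2$, which is exactly the constant in the paper. It is worth noting that the paper's own one-line claim that ``the area of each triangle'' admits this bound quietly elides the same point, so your proof is in good company, but as written the bound $s_K>0$ is false for $K\ge 6$ and the argument does not go through without the selection step above.
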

\begin{proof}
	Any quadrilateral, even when non-convex, may be split into two disjoint triangles by cutting along one of the diagonals.  Each triangle has two edges which are edges of the original quadrilateral which by Lemma~\ref{MoreUnifLem} are at least $\diam(f)/(2K)$ in length.  The angle between these two edges is at least $2\pi/K$.  Thus the area of each triangle is at least $\frac{\diam(f)^2}{8K^2}\sin(2\pi/K)$, proving the result.
\end{proof}

The primary significant geometric fact we will require is the following result, which allows us to approximate any given rectifiable curve with a union of quadrilaterals so that the sum of the diameters of the quadrilaterals is not bigger than some constant times the length of the given curve.

\begin{proposition}\label{DiamProp}
	There exists a $C_K$, depending only on $K$, such that the following holds.  Let $Q$ be a $K$-round lattice and $\gamma:[0,1]\rightarrow\C$ be a rectifiable closed loop with $\diam(\gamma) > 2M(Q)$.  Then
	\[
	\sum_{\substack{f \in Q\\ f \cap \gamma \neq \emptyset}} \diam(f) \le C_K \ell(\gamma)
	\]
	where $\ell(\gamma)$ is the length of $\gamma$.
\end{proposition}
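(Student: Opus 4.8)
The plan is to exploit the fact that in a $K$-round lattice, each quadrilateral has area comparable to the square of its diameter (Lemma~\ref{AreaLem}), which means that the quadrilaterals meeting $\gamma$ cannot be too numerous if they are large. First I would fix a quadrilateral $f$ with $f \cap \gamma \neq \emptyset$ and observe that, since $\gamma$ is a closed loop with $\diam(\gamma) > 2M(Q)$ and $f$ has diameter at most $2M(Q)$ (two edges each of length $\le M(Q)$), the curve $\gamma$ cannot be entirely swallowed by a small neighborhood of $f$; more usefully, the portion of $\gamma$ that lies within distance $\diam(f)$ of $f$ must have length at least some fixed multiple of $\diam(f)$. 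Indeed, if $\gamma$ enters $f$, then either $\gamma$ stays near $f$ — in which case, because $\diam(\gamma) > 2M(Q) \ge \diam(f)$, it must also leave the $\diam(f)$-neighborhood of $f$, so it traverses an annular region of width $\sim \diam(f)$ and contributes length $\gtrsim \diam(f)$ there — or $\gamma$ immediately wanders far away, which again forces length $\gtrsim \diam(f)$ to get out. The precise statement I want is: for each such $f$, the length of $\gamma \cap B(f, \diam(f))$ is at least $c \diam(f)$ for a universal constant $c$ (here $B(f,r)$ denotes the $r$-neighborhood of $f$).

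The second ingredient is a bounded-overlap (Vitali-type) argument. I would like to say that a fixed point $z \in \C$ lies in $B(f, \diam(f))$ for only boundedly many faces $f$ of the lattice, with the bound depending only on $K$. This is where the area lower bound of Lemma~\ref{AreaLem} enters: if $z \in B(f,\diam(f))$ then $f \subseteq B(z, 2\diam(f))$, so $\area(f) \ge C_K \diam(f)^2 \ge C_K \area(B(z,2\diam(f)))/(16\pi)$, i.e.\ each such $f$ occupies a definite fraction of the disk it sits in. The faces are interior-disjoint, so within any dyadic range $\diam(f) \in [2^{-k-1}M(Q), 2^{-k}M(Q))$ only $O_K(1)$ of them can contain $z$; but to control all scales simultaneously I need a slightly more careful packing estimate. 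The cleanest route: group the relevant faces by dyadic diameter scale, and for each scale $k$ bound the number of faces at that scale meeting $\gamma$ by (length of $\gamma$)/($c\,2^{-k}M(Q)$) using the first ingredient together with disjointness of the annular shells — actually the overlap must be controlled even within a scale, so I would instead argue: at scale $k$, the sets $\gamma \cap B(f,\diam(f))$ have multiplicity $O_K(1)$ as a point $z$ on $\gamma$ belongs to $B(f,\diam(f))$ for at most $O_K(1)$ faces $f$ of comparable size (by the area/disjointness estimate applied within a ball of radius $\sim 2^{-k}M(Q)$ around $z$). Summing the first-ingredient inequality over all $f$ meeting $\gamma$ then gives
\[
\sum_{\substack{f \in Q \\ f \cap \gamma \neq \emptyset}} c\,\diam(f) \;\le\; \sum_{\substack{f \in Q \\ f \cap \gamma \neq \emptyset}} \ell\bigl(\gamma \cap B(f,\diam(f))\bigr) \;\le\; N_K \,\ell(\gamma),
\]
where $N_K$ is the overlap multiplicity, and rearranging yields the claim with $C_K = N_K/c$.

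The main obstacle I anticipate is making the first ingredient — the lower bound $\ell(\gamma \cap B(f,\diam(f))) \ge c\,\diam(f)$ — fully rigorous and uniform, since it is a purely topological/metric statement about a rectifiable loop that meets a set of diameter $\diam(f)$ but has total diameter exceeding it. The subtlety is that $\gamma$ could dip into $f$ and immediately retreat, so "meeting $f$" alone gives nothing; one genuinely needs the hypothesis $\diam(\gamma) > 2M(Q) \ge \diam(f)$ to force $\gamma$ to cross the annulus $B(f,\diam(f)) \setminus f$, and one must check that a connected rectifiable set crossing an annulus of radii $r$ and $2r$ (roughly) has length at least $r$. That last fact is standard (a connected set meeting both boundary circles of an annulus of width $w$ has diameter, hence length, at least $w$), but threading it through the possibly non-convex geometry of $f$ and being careful about which radius to use will require some attention. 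The second obstacle, controlling overlap across all scales at once rather than scale-by-scale, is handled by the area bound but I would want to double-check the constant does not secretly depend on $M(Q)$ or the number of faces; it should not, since everything is scale-invariant once phrased in terms of $\diam(f)$.
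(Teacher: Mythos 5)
Your first ingredient is sound and is essentially the paper's Lemma~\ref{IntLem}: a face meeting $\gamma$ forces $\gamma$ to spend length comparable to $\diam(f)$ in a neighborhood of $f$, using $\diam(\gamma) > 2M(Q) \ge \diam(f)$ to guarantee the curve either exits that neighborhood or already has length exceeding $\diam(f)$. The gap is in the second ingredient. The displayed inequality $\sum_f \ell(\gamma \cap B(f,\diam(f))) \le N_K\,\ell(\gamma)$ requires the \emph{total} overlap multiplicity of the metric neighborhoods $B(f,\diam(f))$ to be bounded by a constant depending only on $K$, and this is false: a single point can lie in $B(f,\diam(f))$ for faces $f$ of arbitrarily many different scales. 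For instance, take disjoint $K$-round faces $f_k$ of diameter about $2^{-k}$ accumulating geometrically at a point $z_0$, each at distance about $2^{-k-1}$ from $z_0$; then $z_0 \in B(f_k,\diam(f_k))$ for every $k$. Such multi-scale configurations are exactly the lattices this theorem is designed to admit (compare the nested annuli of Figure~\ref{MyLatticesFig}), so the failure is not a pathology you can exclude. Your own dyadic argument only bounds the overlap \emph{within} each scale via Lemma~\ref{AreaLem}; summing the resulting per-scale bounds multiplies $\ell(\gamma)$ by the number of scales present, which is unbounded precisely because no lower bound on face diameters is assumed. The passage from the per-scale bound to the global displayed inequality is therefore unjustified as written, and this is the crux of the proposition, not a technicality.

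The paper closes exactly this gap by replacing the metric neighborhood $B(f,\diam(f))$ with the combinatorial star $\hat f$, the union of $f$ with all faces sharing a vertex with $f$. Two facts make this work: first, $K$-roundness propagates across shared vertices and edges, so all faces of $\hat f$ have diameter comparable to $\diam(f)$ and hence $\hat f$ contains a genuine $\epsilon_K\diam(f)$-neighborhood of $f$ (this recovers your first ingredient); second, since every interior angle is at least $2\pi/K$, at most $K$ faces meet at any vertex, so $\hat f$ consists of at most $4K$ faces and, by symmetry of the incidence relation, each face $f'$ belongs to at most $4K$ stars $\hat f$. This combinatorial overlap bound is scale-free, so the double sum $\sum_{f\cap\gamma\neq\emptyset}\sum_{f'\in\hat f}\ell(\gamma\cap f')$ collapses to $4K\sum_{f'}\ell(\gamma\cap f') = 4K\,\ell(\gamma)$ with no dyadic decomposition needed. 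If you want to repair your argument, the fix is to make your neighborhoods combinatorial rather than metric.
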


To prove the above proposition, one would want to show that each face $f$ which intersects $\gamma$ needs to contain a length of curve comparable to the diameter of $f$.  This is, however, false since a quadrilateral with arbitrarily large diameter may intersect a rectifiable curve in arbitrarily small length since it may pass through the corner.  To circumvent this issue, we will make use of the following lemma on the local neighborhood of a single face.	Let $\hat f$ denote the sublattice of $Q$ consisting of the face $f$ and all faces $f'$ which share a vertex with $f$.  We will call such a sublattice the \emph{neighborhood} of $f$.  What we will show is that the even if a single face may intersect in arbitrarily small length, the neighborhood of that face must contain a length of the curve comparable to the diameter of the original face.

\begin{lemma}\label{IntLem}
There exists an $\epsilon_K$, depending only on $K$, such that the following holds.  Let $Q$ be a $K$-round lattice and $\gamma:[0,1]\rightarrow\C$ be a rectifiable closed loop with $\diam(\gamma) > 2M(Q)$.  If $f$ is any face such that $f \cap \gamma \neq \emptyset$ and , then
\[
\sum_{f' \in \hat f} \ell(\gamma \cap f') \ge \epsilon_K \diam(f).
\]
\end{lemma}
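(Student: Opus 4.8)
The plan is to reduce the lemma to a purely geometric separation estimate for the carrier of $Q$ near $f$, and then to run a short ``escape'' argument for the loop $\gamma$. Set $U \ceq \bigcup_{f'\in\hat f}\overline{f'}$, a closed subset of $\C$; since distinct faces overlap only along edges, $\sum_{f'\in\hat f}\ell(\gamma\cap f')\ge\ell(\gamma\cap U)$, so it suffices to produce $\epsilon_K>0$, depending only on $K$, with $\ell(\gamma\cap U)\ge\epsilon_K\diam(f)$.

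First I would record that \emph{no face of $\hat f$ is small compared with $f$}: there is $\alpha_K>0$, depending only on $K$, with $\diam(f')\ge\alpha_K\diam(f)$ for every $f'\in\hat f$. Each such $f'$ is incident to a vertex $v$ of $f$; the faces incident to $v$ number at most $K$ (their interior angles at $v$ are each $\ge 2\pi/K$ and sum to at most $2\pi$), any two consecutive in the rotation about $v$ share an edge emanating from $v$, and the two edges of a single $K$-round face at one of its vertices have length ratio at most $K$. Walking around $v$ starting from an edge of $f$ at $v$ --- which has length $\ge\diam(f)/(2K)$ by Lemma~\ref{MoreUnifLem} --- then shows that every edge at $v$ of a face incident to $v$ has length at least $\diam(f)/(2K^{K})$, which bounds that face's diameter from below.

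The heart of the proof is the separation estimate: there is $c_K\in(0,\tfrac12]$, depending only on $K$, such that $B(p,c_K\diam(f))\subseteq\operatorname{int}(U)$ for every $p\in f$, provided $f$ lies well inside the carrier of $Q$ --- which is the degeneracy-excluding hypothesis evidently lost in the typo after ``$f\cap\gamma\neq\emptyset$ and'' (the natural reading being e.g.\ that every vertex of $f$ is interior; for a genuine boundary face the statement would fail, a loop touching $f$ only at a boundary point and otherwise leaving the carrier meeting $U$ in arbitrarily small length). The mechanism is that $\partial U$ is a union of edges, each an edge of some $f'\in\hat f$ whose far side is not in $\hat f$; such an edge cannot contain a vertex $w$ of $f$, for then the face across it would be incident to $w$ and hence in $\hat f$, unless that edge lies on $\bd Q$, which the hypothesis forbids near $f$. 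So each edge of $\partial U$ avoids the vertices of $f$; being an edge of a face $f'\in\hat f$ of diameter $\ge\alpha_K\diam(f)$ that is non-adjacent to the vertex or edge of $f'$ shared with $f$, it is kept a distance $\gtrsim_K\diam(f')\gtrsim_K\diam(f)$ from $f$ by the roundness bounds --- a $K$-round quadrilateral is genuinely ``round'': cutting into two triangles as in Lemma~\ref{AreaLem} and using the edge-ratio bound shows it contains a disk of radius comparable to its diameter and that its non-adjacent edges are a definite distance apart, so $f$ cannot ``reach around'' it. Making this last geometric step fully rigorous --- in particular while allowing the single reflex vertex a non-convex $K$-round quadrilateral may carry, and ruling out that $f$ wraps around the round face separating it from $\C\setminus U$ --- is the step I expect to be the main obstacle.

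Granting the separation estimate, the conclusion is immediate. Fix $p\in f\cap\gamma$ and put $\rho\ceq c_K\diam(f)$. If $\gamma\subseteq U$ then $\ell(\gamma\cap U)=\ell(\gamma)\ge\diam(\gamma)>2M(Q)\ge\diam(f)$ and we are done. Otherwise, since the diameter of a quadrilateral is realized between vertices joined by at most two edges, $\diam(f)\le 2M(Q)<\diam(\gamma)$, so $\diam(\gamma)>\diam(f)\ge 2\rho$ and hence $\gamma\not\subseteq\overline{B(p,\rho)}$; therefore $\gamma$ has a subarc $\sigma$ running from $p$ to a point of $\partial B(p,\rho)$ with $\sigma\subseteq\overline{B(p,\rho)}$ and $\ell(\sigma)\ge\rho$. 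By the separation estimate $B(p,\rho)\subseteq\operatorname{int}(U)\subseteq U$, so $\sigma$ minus its single endpoint lies in $U$, whence $\ell(\gamma\cap U)\ge\ell(\sigma)\ge\rho=c_K\diam(f)$. Taking $\epsilon_K\ceq c_K$ finishes the proof.
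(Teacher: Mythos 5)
Your overall strategy is the same as the paper's: show that the union $U$ of the faces of $\hat f$ contains a full $\epsilon_K\diam(f)$-neighborhood of $f$, then use $\diam(\gamma)>2M(Q)\ge\diam(f)$ together with $\gamma\cap f\neq\emptyset$ to force $\gamma$ to spend length at least $\epsilon_K\diam(f)$ inside $U$. Your closing escape argument is a clean (if anything slightly cleaner) version of the paper's ``the lengths must sum up to at least the diameter'' step, and your reading of the garbled hypothesis is reasonable --- the statement as printed is indeed missing a condition, and near $\bd Q$ the neighborhood claim degrades.

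The genuine gap is the one you flag yourself: the separation estimate $B(p,c_K\diam(f))\subseteq U$ for $p\in f$ is asserted, with a sketch of a mechanism via the structure of $\partial U$, but not proved, and you name it as the main obstacle. The paper closes it in two steps that avoid analyzing $\partial U$ altogether. First, the corners: since at most $K$ faces meet at any vertex, iterating the edge-ratio bound around each vertex of $f$ gives every edge incident on a vertex of $f$ length at least $\diam(f)/(2K^{K+1})$, and Lemma~\ref{MoreUnifLem} then gives every diagonal incident on such a vertex length at least $\diam(f)/(8K^{K+3})$; hence $\hat f$ contains the $\epsilon_K'\diam(f)$-ball around each corner of $f$, with $\epsilon_K'=1/(8K^{K+3})$. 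Second, the edges: for a point of an edge $e$ of $f$ away from the corners, the face $f'$ across $e$ lies in $\hat f$, and the interior-angle bound $2\pi/K$ keeps the edge of $f'$ opposite to $e$ at distance at least $\sin(2\pi/K)$ times the (already bounded-below) minimal edge length of $f'$; combined with the corner balls this yields the full neighborhood with $\epsilon_K=\sin(2\pi/K)\,\epsilon_K'$. This is the content your sketch is missing; note that it sidesteps the reflex-vertex and ``wrapping around'' worries you raise, because it never needs to identify which edges form $\partial U$ --- it only uses lower bounds on edge lengths, diagonal lengths, and one point-to-opposite-edge distance, all direct consequences of $K$-roundness.
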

\begin{proof}
	First, note that because the faces of $Q$ are $K$-round, we know that all angles are bounded below by $2\pi/K$ and hence that no more than $K$ quadrilaterals may be incident on any single vertex.  Let $m$ be the minimal edge length of $f$.  Since $f$ is $K$-round, we know by Lemma~\ref{MoreUnifLem} that $m \ge \diam(f)/(2K)$.  Let $\hat m$ denote the minimal length of an edge incident on a vertex of $f$.  By repeated application of the definition of $K$-round, using the two facts above, we see that $\hat m \ge \diam(f)/(2K^{K+1})$.  Moreover, by another application of Lemma~\ref{MoreUnifLem}, we see that every diagonal incident on a vertex of $f$ is bounded below in length by $\hat m/(4K^2) \ge \diam(f)/(8K^{K+3})$.  Thus, we see that $\hat f$ contains the $\epsilon_K' \diam(f)$ neighborhood of the the \emph{corners} of $f$, for $\epsilon_K' = \frac{1}{8K^{K+3}}$.
	
	\begin{figure}[ht!]\label{NeighborhoodFig}
		\labellist
		\pinlabel $f$ [bl] at 47 65
		\endlabellist
		\includegraphics[width=2.5in]{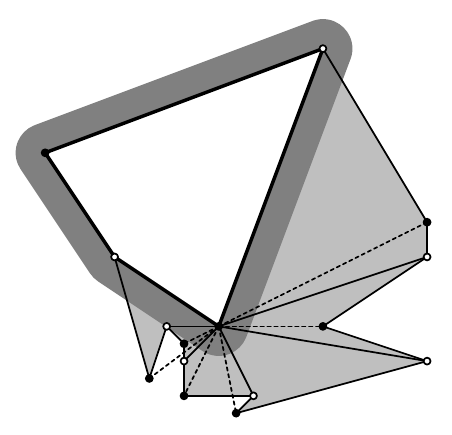}
		\caption{The portion of the neighborhood of a face $f$ (shown in white) in $Q$ incident on a single vertex.  The faces of the neighboring faces are shown in light grey with diagonals shown dashed.  Dark grey illustrates the $\epsilon_K \diam(f)$ neighborhood of $f$ which may be found in the union of the neighboring faces.}
	\end{figure}
	
	This may be extended to a neighborhood of the entirety of $f$ by invoking the definition of $K$-round directly and noting that since every angle is bounded below by $2\pi/K$, we know that the opposite edge of the any quadrilateral adjacent to any edge of $f$ must be at least $\epsilon_K \ceq \sin(2\pi/K) \epsilon_K'$ away, thus providing a full $\epsilon_K \diam(f)$ neighborhood of $f$.  This is illustrated in Figure~\ref{NeighborhoodFig}.
	
	Since $\diam(\gamma) \ge 2M(Q) \ge 2\epsilon_K \diam(f)$ we know that
	\[
	\diam\Big[\bigcup_{f' \in \hat f} \gamma \cap f'\Big] \ge \epsilon_K \diam(f),
	\]
	and thus, because the lengths must sum up to at least the diameter,
	\[
	\sum_{f' \in \hat f} \ell(\gamma \cap f') \ge \epsilon_K \diam(f).
	\]
\end{proof}

We may now prove the main geometric estimate.

\begin{proof}[Proof of Proposition~\ref{DiamProp}]
	By summing the result of Lemma~\ref{IntLem} over all faces intersecting $\gamma$ we see that
	\[
	\sum_{\substack{f \in Q\\ f \cap \gamma \neq \emptyset}} \diam(f) \le \frac{1}{\epsilon_K} \sum_{\substack{f \in Q}{f\cap \gamma \neq \emptyset}} \sum_{f'\in \hat f} \ell(\gamma \cap f').
	\]
	Since, as in the previous proof, there are at most $K$ faces incident on any single vertex, we know that $\hat f$ is composed of at most $4K$ faces, and thus that for any $f'$ there are at most $4K$ $f\in Q$ with $f' 
	\in \hat f$.  Thus,
	\begin{align*}
	\frac{1}{\epsilon_K} \sum_{\substack{f \in Q}{f\cap \gamma \neq \emptyset}} \sum_{f'\in \hat f} \ell(\gamma \cap f') & = \frac{1}{\epsilon_K} \sum_{f' \in Q} \ell(\gamma \cap f') \cdot \#\{f \in Q \ : \ f\cap \gamma \neq \emptyset, f' \in \hat f\} 
\\
& \le \frac{4K}{\epsilon_K} \sum_{f' \in Q} \ell (\gamma \cap f') = C_K \ell(\gamma).
\end{align*}
\end{proof}

\section{Energy estimates and equicontinuity}

The previous section provided a number of lemmas relating curves to discrete equivalents in the quadrilateral lattice.  In this section we will take this uniform geometric control and use it to provide control on the discrete Dirichlet energy of functions on the lattice, as well as control on the modulus of continuity of discrete harmonic functions.

\begin{definition}
	The \emph{semi-energy} of a function $u:Q^*\rightarrow \R$ along the path $w_0w_1\ldots w_m$ in black vertices of $Q^\bullet$ is 
	\[
	\hat E_{w_0 \ldots w_m}(u) \ceq \sum_{i = 1}^m |\nabla_Q u(f_i)|^2\cdot|w_i-w_{i-1}|
	\]
	where $f_i$ is the quadrilateral with diagonal $w_{i-1}w_i$.
\end{definition}

It is important to note that the definition of semi-energy takes as its input a chain of black vertices so that each consecutive pair of vertices are the diagonal of a face.  This means the semi-energy may equally well be regarded as a chain of faces with consecutive faces intersecting in $Q^\bullet$.

\begin{lemma}\label{PathEnergyLem}
	Let $u:Q^*\rightarrow \R$ be any function on any quadrilateral lattice and let $w_0w_1\ldots w_m$ be a path on the black vertices of $Q^\bullet$.  Then,
	\[
	\hat E_{w_0\ldots w_m}(u) \ge \frac{(u(w_m)-u(w_0))^2}{\ell(w_0\ldots w_m)},
	\]
	where
	\[
	\ell(w_0\ldots w_m) \ceq \sum_{j = 1}^m |w_j-w_{j-1}|
	\]
	is the Euclidean length of the path.
\end{lemma}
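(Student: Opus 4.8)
The plan is to derive this from the Cauchy--Schwarz inequality, exactly as one would prove the corresponding continuum fact that the Dirichlet energy along a curve controls the oscillation of the function. First I would observe that along the chain $w_0 w_1 \ldots w_m$, the function value telescopes:
\[
u(w_m) - u(w_0) = \sum_{i=1}^m \bigl(u(w_i) - u(w_{i-1})\bigr).
\]
The key point is to rewrite each increment $u(w_i) - u(w_{i-1})$ in terms of the discrete gradient of the face $f_i$ having $w_{i-1}w_i$ as a diagonal. Since $w_{i-1}, w_i$ are the two black (diagonal-opposed) vertices of $f_i$, the defining relation of $\nabla_Q u(f_i)$ gives
\[
\nabla_Q u(f_i) \odot (w_i - w_{i-1}) = u(w_i) - u(w_{i-1}),
\]
so by the Cauchy--Schwarz inequality for the dot product on $\R^2 \cong \C$,
\[
|u(w_i) - u(w_{i-1})| \le |\nabla_Q u(f_i)| \cdot |w_i - w_{i-1}|.
\]

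Next I would apply the Cauchy--Schwarz inequality to the telescoped sum, splitting the factor $|w_i - w_{i-1}|$ as $|w_i - w_{i-1}|^{1/2} \cdot |w_i - w_{i-1}|^{1/2}$:
\[
|u(w_m) - u(w_0)| \le \sum_{i=1}^m |\nabla_Q u(f_i)| \cdot |w_i - w_{i-1}|^{1/2} \cdot |w_i - w_{i-1}|^{1/2}
\le \Bigl(\sum_{i=1}^m |\nabla_Q u(f_i)|^2 |w_i - w_{i-1}|\Bigr)^{1/2} \Bigl(\sum_{i=1}^m |w_i - w_{i-1}|\Bigr)^{1/2}.
\]
The first factor on the right is exactly $\hat E_{w_0\ldots w_m}(u)^{1/2}$ and the second is $\ell(w_0\ldots w_m)^{1/2}$. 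Squaring and rearranging yields
\[
\hat E_{w_0\ldots w_m}(u) \ge \frac{(u(w_m) - u(w_0))^2}{\ell(w_0\ldots w_m)},
\]
which is the claim. (If $\ell(w_0\ldots w_m) = 0$ the path is constant and there is nothing to prove; otherwise division is legitimate.)

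I do not anticipate a genuine obstacle here — this is a soft estimate and every step is a one-line inequality. The only point requiring a moment of care is the very first reduction: one must use that consecutive vertices $w_{i-1}, w_i$ in the path are *diagonally* opposed in the face $f_i$ (as built into the definition of semi-energy), so that the defining equation of the discrete gradient applies directly to the pair $(w_{i-1}, w_i)$ rather than to an edge. Once that bookkeeping is in place, the Cauchy--Schwarz argument is entirely routine.
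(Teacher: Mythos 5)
Your proof is correct and takes essentially the same route as the paper: both arguments use the defining relation $\nabla_Q u(f_i)\odot(w_i-w_{i-1})=u(w_i)-u(w_{i-1})$ together with Cauchy--Schwarz at the level of each face to control the increments, and then Cauchy--Schwarz on the telescoped sum against the lengths $|w_i-w_{i-1}|$ to obtain the stated bound. The only difference is the order in which the two inequalities are applied, which is immaterial.
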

\begin{proof}
	By definition of the semi-energy and discrete Laplacian we have that
	\begin{align*}
	\hat E_{w_0 \ldots w_m}(u)  & = \sum_{i = 1}^m |\nabla_Q u(f_i)|^2\cdot|w_i-w_{i-1}| \\
	&\ge \sum_{i = 1}^m \frac{(u(w_i)-u(w_{i-1}))^2}{|w_i-w_{i-1}|^2}\cdot|w_i-w_{i-1}| \\
	&= \sum_{i = 1}^m \frac{(u(w_i)-u(w_{i-1}))^2}{|w_i-w_{i-1}|}.
\end{align*}
Now, by Cauchy--Schwartz, we see that
\begin{align*}
	|u(w_m)-u(w_0)|^2 & = \Biggl| \sum_{i=1}^m\frac{(u(w_i)-u(w_{i-1}))}{\sqrt{|w_i-w_{i-1}|}}\cdot\sqrt{|w_{i}-w_{i-1}|} \Biggr|^2\\
	& \le \Biggl[\sum_{i=1}^{m}\frac{(u(w_i)-u(w_{i-1}))^2}{|w_i-w_{i-1}|}\Biggr]\cdot\Biggl[\sum_{i=1}^{m}|w_{i}-w_{i-1}|\Biggr] \\
	& \le \hat E_{w_0\ldots w_m}(u)\cdot \ell(w_0\ldots w_m).
\end{align*}
Which is a rearrangement of the desired inequality.
\end{proof}

We will wish to use this lemma to obtain an estimate on the energy of a discrete harmonic function over the entire lattice in terms of the difference between the values of the function at a pair of points.  To do so, we will first apply the above lemma to estimate the amount of semi-energy ``at distance $r$'' from our pair of points.  In particular, fix a point $z\in\C$ and $r>0$, then let 
\[
\hat E_r^z(u) \ceq \sum_{\substack{f\in Q\\f \cap \bd B_r(z) \neq \emptyset}} |\nabla_Q u(f)|^2\cdot\diam(f).
\]
This provides the semi-energy at distance $r$.  By integrating over $r$ we will then obtain an estimate on the energy itself.  

Throughout the following set of arguments we will be given a pair of points $z$ and $w$ and we will always only consider balls centered on the point $(z+w)/2$.  We will thus suppress explicit dependence on the center point and write $\hat E_r(u)$ for $\hat E_r^{(z+w)/2}(u)$ and $B_r$ for $B_r((z+w)/2)$.

\begin{lemma}\label{SemiEnergyLem}
	There exists a $C_K$, depending only on $K$, such that the following holds. Let $Q$ be a $K$-round orthogonal lattice, $z,w$ be a pair of black vertices in $Q^\bullet$, and $u:Q^*\rightarrow \R$ be a discrete harmonic function. Take $r > |z-w| \vee M(Q)$ restricted to those $r$ with no vertices of $Q$ on the circle of radius $r$ about $(z+w)/2$.  Let
		\[
		\delta_r \ceq |u(z)-u(w)| - \max_{z',w' \in \bd Q \cap B_r} |u(z')-u(w')|.
		\]
		If $\delta > 0$ Then
		\[
		\hat E_{r}(u) \ge \frac{C_K\delta_r^2}{r}.
		\]
\end{lemma}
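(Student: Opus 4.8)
The idea is to exhibit, for \emph{every} pair $z', w' \in \bd Q \cap B_r$, a path in $Q^\bullet$ from $z$ to $w$ which stays inside $B_r$ and whose Euclidean length is bounded by $C_K \cdot r$, and then feed that path into Lemma~\ref{PathEnergyLem}. The bound $\hat E_r(u) \ge C_K \delta_r^2 / r$ will then come from averaging the path-energy estimate over concentric circles of radius between roughly $r/2$ and $r$, since the semi-energy along a single path is controlled by the annular semi-energy $\hat E_\rho(u)$ for the radii $\rho$ that the path's faces cross.

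\emph{Step 1 (path construction).} Since $z, w \in B_r$ and $B_r$ has diameter $2r > 2M(Q)$, a shortest lattice path in $Q^\bullet$ connecting $z$ to $w$ through diagonally-adjacent black vertices exists; I would route it so that it follows, roughly, the straight segment from $z$ to $w$, correcting locally to pass from face to face. Because $Q$ is $K$-round, Lemma~\ref{MoreUnifLem} and Lemma~\ref{AreaLem} give two-sided control on edge, diagonal, and area of each face in terms of its diameter, so a greedy ``walk towards $w$'' across diagonally adjacent faces makes definite geometric progress at each step and the total number of faces used is $O(r/m)$ where $m$ is a lower bound on the diagonal lengths crossed; more importantly the \emph{Euclidean} length of the resulting black path is $\le C_K \, |z-w| + C_K M(Q) \le C_K' r$, using $r > |z-w| \vee M(Q)$. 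One must also check the path can be kept inside, say, $B_{3r/4}$; this uses only that the segment $[z,w]$ lies well inside $B_r$ (its endpoints are at distance $\le |z-w|/2 < r/2$ from the center) together with the $K$-round diameter bounds to bound the lateral excursion of the path by $C_K M(Q)$, which can be absorbed by shrinking to $B_{3r/4}$ once $r$ is large relative to $M(Q)$ — and the hypothesis $r > M(Q)$, possibly after replacing $C_K$, suffices via the maximum principle comparison below, or one simply states the estimate with the constant absorbing a bounded ratio.

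\emph{Step 2 (from path energy to annular energy).} Apply Lemma~\ref{PathEnergyLem} to this path $w_0 = z, \ldots, w_m = w$: since the path lies in $B_{3r/4}$, the endpoints are black boundary-or-interior vertices, but the relevant quantity is $u(z) - u(w)$ itself; using the maximum principle (Lemma~\ref{MaxLem}) restricted to $B_r$ one has $|u(z)-u(w)| \ge \delta_r + \max_{z',w'\in \bd Q\cap B_r}|u(z')-u(w')| \ge \delta_r$, so Lemma~\ref{PathEnergyLem} gives $\hat E_{w_0\ldots w_m}(u) \ge \delta_r^2 / \ell \ge \delta_r^2/(C_K r)$. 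Now $\hat E_{w_0\ldots w_m}(u) = \sum_i |\nabla_Q u(f_i)|^2 |w_i - w_{i-1}|$, and each face $f_i$ crosses a range of radii $\rho$ (about the center) of total length $\ge c_K \diam(f_i) \ge c_K |w_i - w_{i-1}|$, since a $K$-round face of diameter $d$ that a given path-segment crosses meets $\bd B_\rho$ for $\rho$ in an interval of length comparable to $d$ (its radial extent). Hence integrating $\hat E_\rho(u) = \sum_{f \cap \bd B_\rho \ne \emptyset} |\nabla_Q u(f)|^2 \diam(f)$ over $\rho \in (r/2, r)$ — more precisely over the co-null set of radii with no vertices on the circle — dominates, up to a $K$-dependent constant, the path semi-energy $\hat E_{w_0\ldots w_m}(u)$. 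Combined with monotonicity-type control of $\hat E_\rho(u)$ in $\rho$ (or simply averaging), one extracts a single radius $\rho \in (r/2, r)$, and thus $r$ itself after adjusting constants, with $\hat E_\rho(u) \ge C_K \delta_\rho^2/\rho \ge C_K \delta_r^2 / r$ (noting $\delta_\rho \ge \delta_r$ since $B_\rho \subseteq B_r$ shrinks the boundary max).

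\emph{Main obstacle.} The delicate point is Step~1: producing a lattice path between two \emph{prescribed} black vertices whose Euclidean length is bounded by $C_K r$ \emph{with a constant independent of how small the faces near the path are}. The $K$-roundness gives local comparability of all lengths within a face and its neighborhood (as quantified in Lemma~\ref{IntLem}), but one must argue globally that greedily crossing faces along $[z,w]$ does not backtrack or spiral in a way that inflates the length by an unbounded factor; the clean way is to charge the path length to the length of $[z,w]$ plus the diameters of faces it passes through, and invoke Proposition~\ref{DiamProp} (applied to a short closed loop near $[z,w]$, or directly to the segment traversed back and forth) to bound $\sum \diam(f)$ over faces meeting the segment by $C_K \ell([z,w]) \le C_K |z-w| \le C_K r$. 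That reduction is what makes the whole estimate uniform in the fine structure of $Q$.
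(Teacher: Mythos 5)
There is a genuine gap: your path lives in the wrong place. The quantity $\hat E_r(u)$ is a sum only over faces meeting the \emph{circle} $\bd B_r$, and the lemma must hold for every admissible radius $r$, including $r$ much larger than $|z-w| \vee M(Q)$. Your Step 1 builds a path hugging the segment $[z,w]$, whose faces lie within distance roughly $|z-w|/2 + C_K M(Q)$ of the center $(z+w)/2$; for large $r$ these faces meet no circle $\bd B_\rho$ with $\rho \in (r/2,r)$, so the claim in Step 2 that integrating $\hat E_\rho(u)$ over $\rho \in (r/2,r)$ dominates the path semi-energy is false, and no averaging can recover a bound on $\hat E_r(u)$ at the given $r$. (Even if it could, you would only obtain the bound for \emph{some} $\rho$, while the integration over $r$ in Proposition~\ref{EquiProp} needs it for each $r$; there is no monotonicity of $\hat E_\rho(u)$ in $\rho$ to transfer it.)

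The missing idea is to move the pair of points carrying the large difference out to the circle \emph{before} building any path. Let $Q_r$ be the union of faces meeting $B_r$ and apply the maximum principle (Lemma~\ref{MaxLem}) to produce $z',w' \in \bd Q_r \cap Q^\bullet$ with $u(z') \ge u(z)$ and $u(w') \le u(w)$; such points lie on faces that cross $\bd B_r$, so they can be joined by a chain of black diagonals of faces all meeting $\bd B_r$ by following the arc of the circle between them. That chain's semi-energy is a sub-sum of $\hat E_r(u)$, Lemma~\ref{PathEnergyLem} bounds it below by $(u(z')-u(w'))^2/\ell \ge \delta_r^2/\ell$, and Proposition~\ref{DiamProp} applied to the closed loop $\bd B_r$ (not to the segment $[z,w]$) gives $\ell \le \sum_{f \cap \bd B_r \neq \emptyset}\diam(f) \le C_K r$. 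Note also that the boundary term in $\delta_r$ is not decorative: when $\bd Q_r$ meets $\bd Q$ the circular arc may be severed, and one must use $\max_{z',w' \in \bd Q \cap B_r}|u(z')-u(w')|$ to argue that one of the surviving arcs still carries a difference of at least $\delta_r/2$; your argument never engages with this case.
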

\begin{proof}
  Without loss of generality, assume $u(z) > u(w)$.  Let $Q_r$ be the lattice of all quadrilaterals which intersect $B_r$.  Note that $Q_r$ need not, in general, be connected and thus we let $Q_r^z$ be the component of $Q_r$ containing $z$ and $Q_r^w$ be the component of $Q_r$ containing $w$.  By the maximum principle (Lemma~\ref{MaxLem}) there exists a point $z' \in \bd Q_r \cap Q^\bullet$ with $u(z')> u(z)$ and a point $w' \in \bd Q_r \cap Q^\bullet$ with $u(w') < u(w)$.  

\emph{Case 1.}
Consider the case that $\bd Q_r \cap \bd Q = \emptyset$.  Since $z'$ and $w'$ are contained in $\bd Q_r$, we know there exists two faces $f_{z'}, f_{w'}$ which have $z'$ and $w'$ as corners which are moreover intersect $\bd B_r$ (otherwise all neighboring faces would be strictly contained in $B_r$ and thus the point could not be on $\bd Q$).  By following the arc of the circle $\bd B_r$ between those two faces, and taking the black edge of all faces on the arc, one obtains a connected set of black edges connecting $z'$ and $w'$.  Thus there is a path $w_0\ldots w_m$ of black edges connecting $z'$ to $w'$ made entirely of black edges contained in faces intersecting $\bd B_r$.  This case is illustrated in Figure~\ref{CircFig1}.

\begin{figure}[ht!]
	\labellist
	\pinlabel $z$ [bl] at 50 95
	\pinlabel $w$ [r] at 75 70
	\pinlabel $z'$ [l] at 100 135
	\pinlabel $w'$ [l] at 110 20
	\endlabellist
	\includegraphics[width=3in]{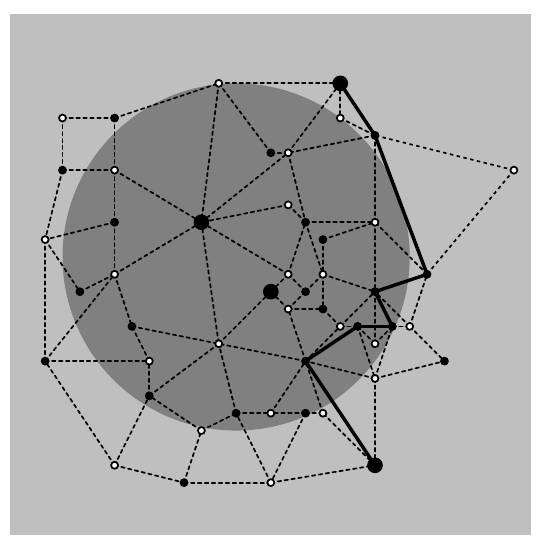}
	\caption{An illustration of \emph{Case 1}.  The light grey background denotes the domain formed by the union of faces in $Q$.  In this case the boundary is far from the set being considered.  The dark grey illustrates $B_r$ and the dashes lines and colored vertices show $Q_r$.  The bold points are as labeled, and the bold path indicates the diagonal edges between black vertices connecting $z'$ to $w'$ used to provide a lower bound on the energy of the boundary quadrilaterals.}
	\label{CircFig1}
\end{figure}

By Lemma~\ref{PathEnergyLem}, we know that
\[
	\hat E_r(u) \ge \hat E_{w_0\ldots w_m}(u)
 \ge \frac{(u(z')-u(w'))^2}{\ell(w_0\ldots w_m)} 
 \ge \frac{(u(z)-u(w))^2}{\ell(w_0\ldots w_m)} 
\ge \frac{\delta^2}{\ell(w_0\ldots w_m)}.
\]
By applying Lemma~\ref{DiamProp}, we see that
\[
\hat E_r(u) \ge \frac{\delta^2}{\ell(w_0\ldots w_m)} \ge \frac{\delta^2}{\sum_{\substack{f \in Q\\f \cap \bd B_r \neq \emptyset}} \diam(f)} \ge \frac{C_K\delta^2}{r}
\]

\emph{Case 2.}
Now, consider the case that $\bd Q_R \cap \bd Q \neq \emptyset$.  If there exists an arc of the circle $\bd B_r$ with the same properties of the previous case which stays inside $Q$, we are again done, so assume that $\bd B_r$ is split into multiple components.  First, assume that $z'$ and $w'$ are contained in $\bd Q_r \setminus \bd Q$  Let $C_{z'}$ be the arc which intersects a face which contains $z'$ and $C_{w'}$ be the similar choice for $w'$.  Let $z''$ be the vertex in $\bd Q_r \cap \bd Q$ at one of the endpoints of $C_{z'}$, and the same for $w''$.  This case is illustrated in Figure~\ref{CircFig2}.

\begin{figure}[ht!]
	\labellist
	\pinlabel $z$ [bl] at 50 95
	\pinlabel $w$ [r] at 75 70
	\pinlabel $z'$ [l] at 100 135
	\pinlabel $w'$ [l] at 45 10
	\pinlabel $z''$ [l] at 100 65
	\pinlabel $w''$ [l] at 5 60
	\endlabellist
	\includegraphics[width=3in]{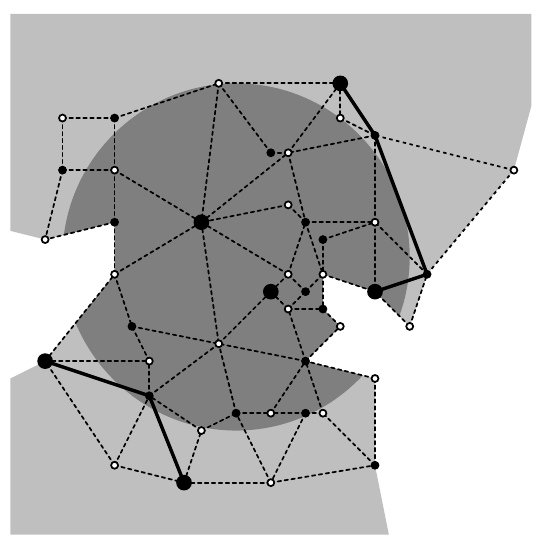}
	\caption{An illustration of \emph{Case 2} with the same interpretation as in Figure~\ref{CircFig1}.  In this case the boundary of $Q$ intersects $Q_r$ non-trivially.}
	\label{CircFig2}
\end{figure}

By definition of $\delta$,
\[
(u(z') - u(z'')) + (u(w'') - u(w')) \ge (u(z) - u(w)) - (u(w'') - u(z'')) \ge \delta,
\]
and thus one of $u(z') - u(z'')$ or $u(w'')-u(w')$ is greater than $\delta/2$.  Applying the same argument as the first case to those points yields the desired bound.

\emph{Case 3.}
Now assume one of $z'$ or $w'$ is contained in $\bd Q_r \setminus \bd Q$ and the other is not; we will assume, without loss of generality, that $z' \in \bd Q_r \setminus \bd Q$.  Take $z''$ as in the second case. This case is illustrated in Figure~\ref{CircFig3}.

\begin{figure}[ht!]
	\labellist
	\pinlabel $z$ [bl] at 50 95
	\pinlabel $w$ [r] at 75 70
	\pinlabel $z'$ [l] at 100 135
	\pinlabel $z''$ [l] at 100 65
	\pinlabel $w'$ [l] at 5 60
	\endlabellist
	\includegraphics[width=3in]{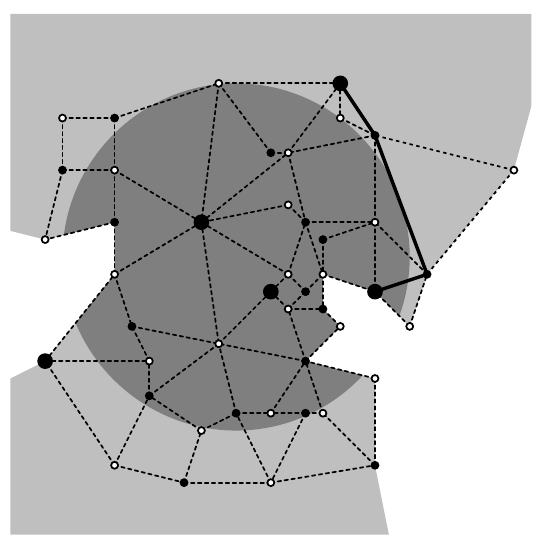}
	\caption{An illustration of \emph{Case 2} with the same interpretation as in Figure~\ref{CircFig1}.  In this case the boundary of $Q$ intersects $Q_r$ non-trivially, and the maximal value is attained on the boundary.}
	\label{CircFig3}
\end{figure}

Then we have, by the definition of $\delta$, that
\[
u(z') - u(z'') = (u(z')-u(w'))-(u(z'')-u(w')) \ge (u(z)-u(w))-(u(z'')-u(w')) \ge \delta.
\]
The desired bound may again be obtained as in the first case.

\emph{Case 4.}
If neither $z'$ nor $w'$ are $\bd Q_r \setminus \bd Q$, then we have that
\[
0 > (u(z)-u(w))-(u(z')-u(w')) \ge \delta
\]
which is a contradiction.
\end{proof}

We now integrate this in $r$ to obtain the desired result.

\begin{proposition}\label{EquiProp}
There exists a $C_K$, depending only on $K$, such that the following holds. Let $Q$ be a $K$-round orthogonal lattice, $z,w$ be a pair of black vertices in $Q^\bullet$, and $u:Q^*\rightarrow \R$ be a discrete harmonic function on $Q^*$.  Then, for $R \ge |z-w| \vee M(Q)$
	\[
		|u(z)-u(w)| \le C_K E_{Q_R}(u)^{1/2}\log^{-1/2}\Bigl[\frac{R}{|z-w|\vee M(Q)}\Bigr] + \max_{z',w' \in \bd Q \cap B_R} |u(z')-u(w')|.
	\]
\end{proposition}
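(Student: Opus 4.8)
The plan is to integrate the one-scale estimate of Lemma~\ref{SemiEnergyLem} against $dr$ and recognise the result as comparable to $E_{Q_R}(u)$, with the logarithmic factor coming out of $\int dr/r$. Throughout, write $m \ceq |z-w| \vee M(Q)$ and, for $r>0$, set $D(r) \ceq \max_{z',w'\in\bd Q\cap B_r}|u(z')-u(w')|$, a non-decreasing function of $r$. Put $\delta \ceq |u(z)-u(w)| - D(R)$. If $\delta\le 0$ there is nothing to prove, since the energy term on the right is non-negative; so assume $\delta>0$. Because $D(r)\le D(R)$ for $r\le R$, the quantity $\delta_r$ appearing in Lemma~\ref{SemiEnergyLem} obeys $\delta_r\ge\delta>0$ for every $r\in(m,R)$.

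\emph{Step 1 (lower bound via Lemma~\ref{SemiEnergyLem}).} For each $r\in(m,R)$ lying outside the finite set of radii carrying a vertex of $Q$ on $\bd B_r$, Lemma~\ref{SemiEnergyLem} gives $\hat E_r(u)\ge C_K\delta_r^2/r\ge C_K\delta^2/r$. The excluded radii form a finite (hence null) set, so integrating over $(m,R)$ yields
\[
\int_m^R \hat E_r(u)\,dr \;\ge\; C_K\,\delta^2\int_m^R\frac{dr}{r} \;=\; C_K\,\delta^2\,\log\frac{R}{m}.
\]

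\emph{Step 2 (upper bound via Tonelli and Lemma~\ref{AreaLem}).} Unwinding the definition of $\hat E_r(u)$ and interchanging sum and integral,
\[
\int_m^R \hat E_r(u)\,dr \;=\; \sum_{f\in Q} |\nabla_Q u(f)|^2\,\diam(f)\cdot\bigl|\{\,r\in(m,R): f\cap\bd B_r\neq\emptyset\,\}\bigr|.
\]
For a fixed face $f$ the function $p\mapsto |p-(z+w)/2|$ ranges over an interval of length at most $\diam(f)$, so the set of radii for which $\bd B_r$ meets $f$ has measure $\le\diam(f)$; moreover any such $f$ with $r<R$ already meets $B_R$, hence lies in $Q_R$. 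Therefore the integral is at most $\sum_{f\in Q_R}|\nabla_Q u(f)|^2\diam(f)^2$, and Lemma~\ref{AreaLem} (in the form $\diam(f)^2\le C_K^{-1}\area(f)$) bounds this by $C_K^{-1}E_{Q_R}(u)$.

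Combining the two steps gives $C_K\delta^2\log(R/m)\le C_K^{-1}E_{Q_R}(u)$, i.e.\ $\delta\le C_K E_{Q_R}(u)^{1/2}\log^{-1/2}(R/m)$ after relabelling the ($K$-dependent) constant; adding $D(R)=\max_{z',w'\in\bd Q\cap B_R}|u(z')-u(w')|$ back to the identity $|u(z)-u(w)|=\delta+D(R)$ gives the stated inequality. I do not anticipate a genuine obstacle: once Lemma~\ref{SemiEnergyLem}, Lemma~\ref{AreaLem} and Proposition~\ref{DiamProp} are in hand, this is the classical length--area (Fubini) argument, and the main work has already been done in establishing those. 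The points needing care are (i) that the finitely many radii excluded by Lemma~\ref{SemiEnergyLem} are negligible in the $r$-integral, (ii) the monotonicity of $D(r)$, which is what lets us replace the scale-dependent $\delta_r$ by the single quantity $\delta$, and (iii) the estimate $|\{r: f\cap\bd B_r\neq\emptyset\}|\le\diam(f)$ combined with Lemma~\ref{AreaLem}, which is exactly what makes each face contribute with its correct energy weight $|\nabla_Q u(f)|^2\area(f)$ when we pass back to $E_{Q_R}(u)$.
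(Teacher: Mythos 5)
Your proof is correct and follows essentially the same route as the paper: integrate the single-scale bound of Lemma~\ref{SemiEnergyLem} over $r\in(m,R)$ to extract the logarithm, then bound the integrated semi-energy above by $E_{Q_R}(u)$ using the fact that each face meets $\bd B_r$ for a set of radii of measure at most $\diam(f)$ together with Lemma~\ref{AreaLem}. Your write-up is in fact slightly more careful than the paper's at the Tonelli step and in justifying the monotonicity $\delta_r\ge\delta_R$, but the argument is the same.
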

\begin{proof}
	Let 
	\[
	\delta_R \ceq |u(z)-u(w)| - \max_{z',w' \in \bd Q \cap B_r} |u(z')-u(w')|.
	\]
	If $\delta_R \le 0$, the desired result holds immediately, so assume $\delta_R > 0$.
	
	By Lemma~\ref{SemiEnergyLem} (noting that since $Q$ is finite, the bound applies for all but finitely many $r$), and by observing that $\delta_r > \delta_R$ for $r<R$,
	\begin{align*}
	\int_{|z-w| \vee M(Q)}^R \hat E_{r}(u) \ dr & \ge \int_{|z-w| \vee M(Q)}^R \frac{C_K\delta_r^2}{r} \\
	& \ge \int_{|z-w| \vee M(Q)}^R \frac{C_K\delta_R^2}{r} dr \\
	& = C_K\delta_R^2\log\Bigl[\frac{R}{|z-w|\vee M(Q)}\Bigr]
	\end{align*}
	
	Now, by examining the definition of semi-energy, and Lemma~\ref{AreaLem}, we see
	\begin{align*}
		\int_{|z-w| \vee M(Q)}^R \hat E_{r}(u) \ dr & \le \int_0^R \sum_{\substack{f\in Q \\ f \cap \bd B_r \neq \emptyset}} |\nabla_Q u(f)|^2\cdot\diam(f) \ dr \\
		& \le \sum_{\substack{f \in Q \\ f \cap B_r \neq \emptyset}} |\nabla_Q u(f)|^2\cdot \diam(f)^2 \ dr \\
		& \le C_K' \sum_{\substack{f \in Q \\ f \cap B_r \neq \emptyset}} |\nabla_Q u(f)|^2\cdot \area(f) \ dr \\
		& = C_K' E_{Q_R}(u).
	\end{align*}
	
	Combining these, substituting the definition of $\delta_R$, and solving for $|u(z)-u(w)|$ we obtain the following estimate:
	\[
		|u(z)-u(w)| \le C_K E_{Q_R}(u)^{1/2}\log^{-1/2}\Big[\frac{R}{|z-w|\vee M(Q)}\Big] + \max_{z',w' \in \bd Q \cap B_r} |u(z')-u(w')|.
	\]
\end{proof}


\section{Laplacian Approximation}
Unlike the equicontinuity result above, the Laplacian approximation lemma requires only very slight modifications from the proof presented in \cite{Skopenkov}.  For completeness we include the proofs here.  Our aim is to prove the following

\begin{proposition}\label{LapProp}
	There exists a $C_K$, depending only on $K$, such that the following holds.  Let $Q$ be a $K$-round quadrilateral lattice, and $R$ be square of side-length $r > M(Q)$ inside $\bd Q$.  Then, for any $g \in C^3(\C)$ we have
	\[
	\Big|\sum_{z \in R \cap Q^\bullet} [\Delta_Q(g|_{Q^*})](z) - \int_R\Delta g \; dx dy\Big| \le C_K \Big(M(Q)r\max_{z \in R}|D^2g(z)|+r^3\max_{z \in R}|D^3g(z)|\Big).
	\]
\end{proposition}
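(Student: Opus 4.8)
The plan is to compare, term by term, the discrete Laplacian summed over black vertices in $R$ with the integral of the continuum Laplacian, using Taylor expansion of $g$ to third order around a chosen point in each face. First I would rewrite the left-hand sum using the explicit formula for $\Delta_Q$ in orthogonal lattices, namely
\[
\sum_{z \in R \cap Q^\bullet} [\Delta_Q(g|_{Q^*})](z) = \sum_{\substack{f = [z_1z_2z_3z_4] \in Q \\ z_1 \in R}} \frac{|z_2-z_4|}{|z_1-z_3|}\bigl(g(z_3)-g(z_1)\bigr),
\]
so that (up to boundary faces straddling $\bd R$, which I will handle separately) the sum is organized over faces $f$ contained in or near $R$. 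For each such face, pick a reference point $p_f$ (say a vertex or the intersection of the diagonals) and Taylor-expand $g(z_3)-g(z_1)$ about $p_f$: the zeroth-order terms cancel, the first-order term contributes $\nabla g(p_f)\cdot(z_3-z_1)$, the second-order term contributes the Hessian quadratic form, and the remainder is $O(\diam(f)^3 \max|D^3 g|)$. The key algebraic identity to extract is that, for an orthogonal quadrilateral, the first-order-plus-second-order contribution reproduces $\Delta g(p_f)\cdot \area(f)$ up to an error controlled by $\diam(f)^3\max|D^3g|$ together with a term of the form $M(Q)\,\diam(f)\,\max|D^2 g|$ coming from evaluating the Hessian at $p_f$ rather than integrating it over $f$. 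This is exactly the computation in \cite{Skopenkov}; the only modification is that one tracks the constants in terms of $K$-roundness via Lemmas~\ref{MoreUnifLem} and \ref{AreaLem} rather than via the uniformity constant $C$.

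Next I would sum these per-face estimates. The total error is bounded by
\[
C_K \sum_{f \subseteq R'} \Bigl( M(Q)\,\diam(f)\,\max_{R}|D^2 g| + \diam(f)^3 \max_{R}|D^3 g|\Bigr),
\]
where $R'$ is a slight enlargement of $R$ (by $O(M(Q))$) to absorb the boundary faces. To turn this into the claimed bound I need two geometric facts: first, $\sum_{f \subseteq R'} \area(f) \le \area(R') \le C r^2$ for $r > M(Q)$; second, $\diam(f) \le C_K^{1/2}\,\area(f)^{1/2}$ by Lemma~\ref{AreaLem}, hence $\sum_f \diam(f) \le C_K \sum_f \area(f)/\diam(f)$... — more directly, $\sum_f \diam(f)^2 \le C_K \sum_f \area(f) \le C_K r^2$ by Lemma~\ref{AreaLem}, and by Cauchy--Schwarz $\sum_f \diam(f) \le (\#\{f\})^{1/2}(\sum_f\diam(f)^2)^{1/2}$, but controlling $\#\{f\}$ is problematic without a density bound. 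The cleaner route: since all faces in $R'$ have diameter at most $M(Q)$, we have $\sum_f \diam(f)^3 \le M(Q)\sum_f \diam(f)^2 \le C_K M(Q) r^2 \le C_K r^3$, giving the $r^3\max|D^3g|$ term; and $\sum_f M(Q)\diam(f)\max|D^2g| \le M(Q)\max|D^2g|\sum_f\diam(f)$. For this last sum I would again use $\diam(f)^2 \le C_K^{-1}\area(f)$, write $\diam(f) = \diam(f)^2/\diam(f) \le \diam(f)^2/(\text{min edge})$, which is circular; instead use $\diam(f)\le \diam(f)^2/(\diam(f)/(2K))$... Let me instead bound $\sum_f \diam(f) \le \sum_f \area(f)/(C_K\diam(f)) $ — still circular. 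The honest statement is $\sum_f \diam(f) \le C_K \, r \, / \, \min_f \diam(f) \cdot (\ldots)$; the right tool is Proposition~\ref{DiamProp} applied to $\gamma = \bd R'$ for the \emph{boundary} faces, combined with, for \emph{interior} faces fully inside $R'$, the trivial bound $\sum \diam(f) \le C_K\sum \area(f)/\diam(f) \le (C_K/\,\text{smallest diam})\,r^2$ — which does depend on the smallest face.

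So the main obstacle, and the point requiring care, is precisely bounding $\sum_{f \subseteq R'} \diam(f)$ by $C_K r^2/M(Q)$ or better, since naively this is where a vertex-density assumption would be used. I expect the resolution is that one does \emph{not} need to sum $\diam(f)$ over all faces: rerunning the per-face Taylor estimate more carefully, the second-order error term should actually be $M(Q)\cdot(\text{something summing to }\area(f))$, e.g. of size $M(Q)\,\area(f)^{1/2}\cdot\diam(f)/\area(f)^{1/2}\cdot\ldots$; more precisely the error from replacing $\int_f \Delta g$ by $\Delta g(p_f)\area(f)$ is $\le \diam(f)\,\area(f)\max|D^3g|$ (not $\diam(f)^3$), and the $M(Q)r\max|D^2g|$ term should come only from the $O(M(Q))$-collar of boundary faces of $R$, whose diameters sum to $C_K\,r$ by Proposition~\ref{DiamProp} applied to $\gamma=\bd R$, times $M(Q)\max|D^2 g|$ for the crude estimate on each such face. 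With that bookkeeping — interior faces contribute only to the $r^3\max|D^3 g|$ term via $\sum\diam(f)\area(f)\le M(Q)\sum\area(f)\le M(Q)r^2 \le r^3$, and boundary faces contribute the $M(Q)r\max|D^2g|$ term via $\sum_{\text{bdry }f}\diam(f)\le C_K r$ — the two terms in the statement emerge exactly, and no global density control is needed. I would therefore organize the final writeup as: (1) per-face Taylor estimate with the $K$-dependent constants; (2) split faces into those interior to $R$ and those meeting $\bd R$; (3) bound the interior sum by the $D^3$ term using $\diam(f)\le M(Q)$ and $\sum\area(f)\le r^2$; (4) bound the boundary sum by the $D^2$ term using Proposition~\ref{DiamProp} on $\bd R$.
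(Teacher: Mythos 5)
Your final bookkeeping --- boundary faces handled via Proposition~\ref{DiamProp} applied to $\bd R$ to give the $M(Q)r\max|D^2g|$ term, and the third-order remainder giving $r^3\max|D^3g|$ --- matches the shape of the paper's argument, but your step (1) rests on an identity that is false for this discrete Laplacian. Since
\[
\Delta_Q u(z) \;=\; \sum_{\substack{f=[z_1z_2z_3z_4]\\ z_1=z}}\frac{|z_2-z_4|}{|z_1-z_3|}\bigl(u(z_3)-u(z_1)\bigr),
\]
every face contributes to $\sum_{w\in R\cap Q^\bullet}\Delta_Q g(w)$ once from each of its two black vertices, and those two contributions are exact negatives of each other. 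So the net contribution of any face with both black vertices in $R$ is exactly $0$, not $\Delta g(p_f)\,\area(f)+O(\diam(f)\,\area(f)\max|D^3g|)$: the entire sum telescopes to a discrete flux over the faces straddling $\bd R$. (The paper uses precisely this cancellation in its treatment of $\tilde g$.) A Taylor expansion of $g(z_3)-g(z_1)$ about one reference point per face therefore cannot generate the bulk term $\int_R\Delta g\,dxdy$, which is genuinely of order $r^2$ (take $g=|z|^2$, where the right side of the proposition forces the discrete sum to approach $4\,\area(R)$). Your error budget in steps (2)--(3) is accounting for a main term that step (1) never produces, and the computation you attribute to \cite{Skopenkov} is not of this per-face form.

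The mechanism that actually recovers the bulk term is a \emph{global} decomposition $g = (\text{affine}) + a_3\Re z^2 + a_4\Im z^2 + a_5|z|^2+\tilde g$ with $D^k\tilde g(0)=0$ for $k\le 2$. Affine functions are exactly discrete harmonic; for $|z|^2$ one re-centers at each vertex $w$ (using $[\Delta_Q|z|^2](w)=[\Delta_Q|z-w|^2](w)$, valid because the difference is affine and $w$ is interior) and then re-expands about the point $z'$ of Lemma~\ref{OtherAreaLem}, so that the two contributions of a face to its two black vertices become signed areas summing to $4\,\area(f)$ rather than cancelling; $\Re z^2$ and $\Im z^2$ are handled analogously with $z''$ plus a summation by parts around the boundary cycle; and only the remainder $\tilde g$ is treated by the crude gradient comparison of Lemma~\ref{GradApproxLem} together with Proposition~\ref{DiamProp}, applied to boundary faces alone --- which is where your instinct that ``no sum of $\diam(f)$ over interior faces should be needed'' is vindicated. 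Without this re-centering/exact-quadratic computation (or, equivalently, a direct proof that the discrete flux through $\bd R$ approximates $\int_{\bd R}\partial_n g\,ds$), the argument does not close.
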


The proof of this proposition proceeds as follows.  Take an arbitrary function $g \in C^3(\C)$, and without loss of generality assume that $R$ is centered at $0$.  Expand $g$ as
\[
g(z) = a_0 + a_1\Re z + a_2 \Im z + a_3 \Re z^2 + a_4 \Im z^2 + a_5 |z|^2 + \tilde g(z)
\]
where $D^k\tilde g(0) = 0$ for $k = 0,1,2$.

As long as Proposition~\ref{LapProp} can be proven for each individual term in the above expansion the general case follows.  The cases $g(z) = 1, \Re(z), \Im(z)$ all follow immediately since all three of those functions are both harmonic and discrete harmonic, and thus the left hand term of Proposition~\ref{LapProp} is always zero.  The remaining terms require more work.  

We will first prove the cases $\Re (z^2)$, $\Im (z^2)$, and $|z|^2$ as follows.  By direct computation,
\[
\int_R \Delta |z^2| \; dx dy = \int_R 4 \; dxdy = 4\,\area(R)
\]
and
\[
\int_R \Delta \Re (z^2) \; dxdy =  \int_R \Delta \Im (z^2) \; dxdy = \int_R 0 \; dxdy = 0.
\]
Thus, the result will follow if we can show that the sum of the discrete Laplacian computes a discrete equivalent of the same term.  Indeed we will see the agreement is exact for all quadrilaterals with both black vertices contained within $R$, and the error can be bounded by the total area of quadrilaterals with one black vertex in $R$ and the other vertex not as illustrated in Figure~\ref{RQFig}

\begin{figure}[ht!]
	\includegraphics[width=3in]{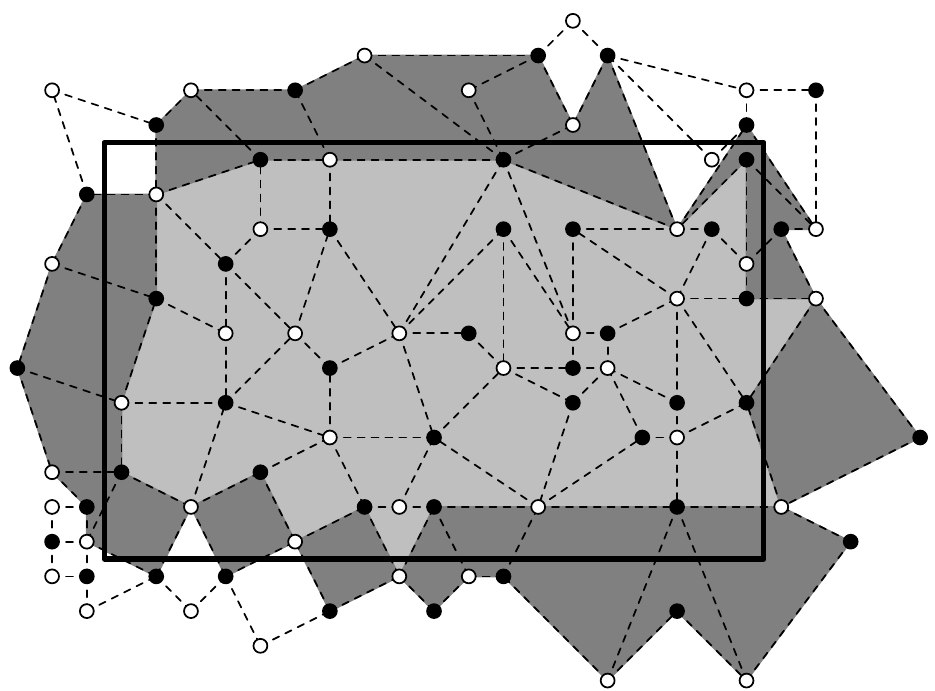}
	\caption{An illustration of the quadrilaterals considered in the proof of Proposition~\ref{LapProp}.  The light grey quadrilaterals are those with both black vertices contained within the rectangle $R$ (show by the bold lines), whereas the dark ones on the boundary have only a single black vertex contained within $R$.  The error will be controlled in terms of the area of the dark quadrilaterals.}
	\label{RQFig}
\end{figure}

First we require an additional lemma.

\begin{lemma}\label{OtherAreaLem}
	Given a $K$-round quadrilateral $f=[z_1z_2z_3z_4]$ with orthogonal diagonals, let $z'$ be the intersection of the intersection of the lines orthogonally bisecting the diagonals of $f$ ($\Re(\overline{(z_1-z_3)}(z''-(z_1+z_3)/2)) = \Re(\overline{(z_2-z_4)}(z''-(z_2+z_4)/2)) = 0$) and $z''$ be the intersection point of the lines $\Re((z_1-z_3)(z''-(z_1+z_3)/2)) = \Re((z_2-z_4)(z''-(z_2+z_4)/2)) = 0$.  Then
	\[
	|\area(z_1z_2z'z_4)| \le C_K \area(z_1z_2z_3z_4)
	\]
	where the area is taken as the signed area, and
	\[
	\Im[(2z''-z_2-z_4)(z_4-z_2)] \le C_K \area(z_1z_2z_3z_4).
	\]
\end{lemma}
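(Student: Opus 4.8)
The plan is to reduce both inequalities to a single geometric fact: the two auxiliary points $z'$ and $z''$ are forced, by the orthogonality of the diagonals of $f$, to lie within $O(\diam(f))$ of $f$ itself. Once that is established every quantity in sight is $O(\diam(f)^2)$, and an appeal to Lemma~\ref{AreaLem} — the only place $K$-roundness enters — converts this into the claimed $O_K(\area(f))$ bound. Throughout, write $m_{13}=(z_1+z_3)/2$ and $m_{24}=(z_2+z_4)/2$ for the midpoints of the two diagonals; both lie in the convex hull of $\{z_1,z_2,z_3,z_4\}$, which has diameter $\diam(f)$, so $|m_{13}-m_{24}|\le\diam(f)$ and $|m_{13}-z_j|,|m_{24}-z_j|\le\diam(f)$ for each $j$.

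I would treat $z'$ first. Here $z'$ is the intersection of the perpendicular bisectors of the two diagonals, so $z'-m_{13}$ is orthogonal to the diagonal $z_1z_3$ and hence — the diagonals being orthogonal — parallel to $z_2-z_4$, while symmetrically $z'-m_{24}$ is parallel to $z_1-z_3$. Thus $z'$ is the meeting point of a line through $m_{13}$ and a line through $m_{24}$ with perpendicular directions, so it exists and is unique, and $z'-m_{13}\perp z'-m_{24}$. Hence $m_{13},z',m_{24}$ form a right triangle with the right angle at $z'$, and a leg is no longer than the hypotenuse: $|z'-m_{24}|\le|m_{13}-m_{24}|\le\diam(f)$. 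By the shoelace identity, the signed areas of $[z_1z_2z'z_4]$ and $f=[z_1z_2z_3z_4]$ differ exactly by the signed areas of the triangles $[z_2z'z_4]$ and $[z_2z_3z_4]$ obtained by cutting along the segment $z_2z_4$; each such triangle has base $z_2z_4$ of length $\le\diam(f)$ and third vertex within distance $\le\diam(f)$ of the line through $z_2,z_4$ (for $z'$ because $m_{24}$ lies on that line and $|z'-m_{24}|\le\diam(f)$, and for $z_3$ because $|z_3-z_2|\le\diam(f)$), hence area $\le\tfrac12\diam(f)^2$. Since $f$ is a simple quadrilateral, $|\area(z_1z_2z_3z_4)|=\area(f)$, so $|\area(z_1z_2z'z_4)|\le\area(f)+\diam(f)^2$; Lemma~\ref{AreaLem} bounds $\diam(f)^2$ by $C_K^{-1}\area(f)$, which is the first inequality.

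The point $z''$ is handled identically. Its defining conditions say precisely that $z''-m_{13}$ is a real multiple of $i\overline{(z_1-z_3)}$ and $z''-m_{24}$ a real multiple of $i\overline{(z_2-z_4)}$. Since conjugation and multiplication by $i$ preserve the unsigned angle between two vectors, orthogonality of the diagonals gives $i\overline{(z_1-z_3)}\perp i\overline{(z_2-z_4)}$, so exactly as above $z''$ exists, is unique, and satisfies $z''-m_{13}\perp z''-m_{24}$, whence $|z''-m_{24}|\le|m_{13}-m_{24}|\le\diam(f)$. Finally, since $z''-m_{24}$ is a real multiple of $i\overline{(z_2-z_4)}$, the product $(2z''-z_2-z_4)(z_4-z_2)=2(z''-m_{24})(z_4-z_2)$ is purely imaginary, so $|\Im[(2z''-z_2-z_4)(z_4-z_2)]|=2|z''-m_{24}|\,|z_4-z_2|\le2\diam(f)^2$, and Lemma~\ref{AreaLem} again turns this into the bound $C_K\area(f)$.

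I do not expect a real obstacle here; the entire content is the observation that, for each of $z'$ and $z''$, the two defining lines meet in a single point lying $O(\diam(f))$ from the face. Orthogonality of the diagonals is what makes this work — parallel or nearly parallel diagonals would push these points off toward infinity — and it is also exactly what supplies the ``leg $\le$ hypotenuse'' estimate. The only care required is the bookkeeping with signed versus unsigned areas in the first part and with the purely imaginary product in the second, and in both cases this is at most a line of computation.
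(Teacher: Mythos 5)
Your proof is correct and follows essentially the same route as the paper: both arguments hinge on the observation that the two lines defining $z'$ (resp.\ $z''$) are orthogonal and pass through the diagonal midpoints, forcing these points to lie within $O(\diam(f))$ of the face (the paper packages this as the converse of Thales' theorem placing $z',z''$ on the circle with diameter $m_{13}m_{24}$, you as ``leg $\le$ hypotenuse'' in the right triangle at $z'$), after which Lemma~\ref{AreaLem} converts the resulting $O(\diam(f)^2)$ bounds into $C_K\area(f)$. Your extra bookkeeping (the shoelace decomposition along $z_2z_4$ and the observation that $(2z''-z_2-z_4)(z_4-z_2)$ is purely imaginary) just makes explicit the ``hence'' step the paper leaves to the reader.
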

\begin{proof}
	In both cases of $z'$ and $z''$, the two lines used to define those points are orthogonal (since $z_1-z_3$ and $z_2-z_4$ are orthogonal).  In both cases, one line passes through $(z_2+z_4)/2$ and the other through $(z_1+z_3)/2$.  Thus, by the converse to Thales' theorem in Euclidean geometry, both the point $z'$ and $z''$ must lie on the circle whose diameter passes through the points $(z_2+z_4)/2$ and $(z_1+z_3)/2$, which is the circle of radius $|z_2+z_4-z_1-z_3|/4 \le \diam(f)/2$ around the point $(z_1+z_2+z_3+z_4)/4$. The other points are all contained in a ball of radius $3\diam(f)/4$ about the same point since
	\[
	|(z_1+z_2+z_3+z_4)/4 - z_1| \le |z_2-z_1|/4+|z_3-z_1|/4+|z_4-z_1|/4 \le 3\diam(f)/4.
	\]
	And hence $|\area(z_1z_2z'z_4)| \le C\diam(f)^2$ and $\Im[(2z''-z_2-z_4)(z_4-z_2)] \le C' \diam(f)^2$.
	
	By Lemma \ref{AreaLem}, we know that $\diam(f)^2 \le C_K \area(f)$ and thus the result follows.
\end{proof}

We may now prove the desired cases.

\begin{proof}[Proof of Proposition~\ref{LapProp} for $|z|^2$.]
	As discussed above, we wish to show that
	\[
	\sum_{w \in R \cap Q^\bullet} [\Delta_Q(|z|^2)](w)
	\]
	approximates $4\cdot\area(R)$.  Note that for $z'$ as defined in Lemma~\ref{OtherAreaLem} we have that given a face $f$, $\nabla_Q|z-z'|^2 = 0$ on $f$.  We will make frequent use of the identity 
	\[
	|z-a|^2 = |z-b|^2 + 2\Re[\overline{(b-a)}(z-b)] + |b-a|^2
	\]
	to shift our quadratic to be based around other points.
	
	First note that for any $w \in Q^\bullet$ we have that
	\[
		[\Delta_Q(|z|^2)](w) = [\Delta_Q(|z-w|^2 + 2\Re[\overline{w}(z-w)] + |w|^2)](w) = [\Delta_Q(|z-w|^2)](w) 
	\]
	since since the last two terms are discrete harmonic.  Now, by Lemma~\ref{LapExpressLem} , and expanding around the point $z'$ from Lemma~\ref{OtherAreaLem}
	\begin{align*}
		\hspace{3em} & \hspace{-3em} [\Delta_Q(|z-w|^2)](w) \\
		& = \sum_{\substack{f=[z_1z_2z_3z_4]\\z_1 = w}} * \nabla_Q(|z-w|^2)(f) \odot (z_4-z_2) \\
		& = \sum_{\substack{f=[z_1z_2z_3z_4]\\z_1 = w}} * \nabla_Q(|z-z'|^2 + 2\Re[\overline{(z'-w)}(z-z')] + |z'-w|^2)(f) \odot (z_4-z_2) \\
		& = \sum_{\substack{f=[z_1z_2z_3z_4]\\z_1 = w}} * \nabla_Q(2\Re[\overline{(z'-w)}(z-z')])(f) \odot (z_4-z_2) \\
		& = \sum_{\substack{f=[z_1z_2z_3z_4]\\z_1 = w}} 2 \; \Im[\overline{(z'-w)}(z_4-z_2)] \\
		& = \sum_{\substack{f=[z_1z_2z_3z_4]\\z_1 = w}} 4 \; \area(z_1z_2z'z_4)
	\end{align*}
	where the area is again interpreted as the signed algebraic area.
	
	We now sum over all $w \in R \cap Q^\bullet$.  Since $\area(z_1z_2z'z_4)+\area(z_3z_4z'z_2) = \area(z_1z_2z_3z_4)$ holds for any collection of points, we know that for any face with both black vertices contained in $R$, the sum over the discrete Laplacian will pick up a term for that face from both black vertices and that those two terms will sum to $4\,\area(z_1z_2z_3z_4)$.  Thus we have that
	\begin{align*}
		\sum_{w \in R \cap Q^\bullet} [\Delta_Q(|z|^2)](w) & = \sum_{f:f\cap Q^\bullet \subset R} 4 \; \area(f) + \sum_{\substack{f=[z_1z_2z_3z_4]\\z_1\in Q^\bullet\cap R\\ z_3 \not \in Q^\bullet\cap R}} 4\, \area(z_1z_2z'z_4) \\
		& = 4\,\area(R_Q) + \sum_{\substack{f=[z_1z_2z_3z_4]\\z_1\in Q^\bullet\cap R\\ z_3 \not \in Q^\bullet\cap R}} 4\,\area(z_1z_2z'z_4)
	\end{align*}
	where $R_Q$ is the union of all faces with both black vertices within $R$.
	The second term above may be bounded in absolute value by
	\[
	C_K\sum_{\substack{f=[z_1z_2z_3z_4]\\z_1\in Q^\bullet\cap R\\ z_3 \not \in Q^\bullet\cap R}} 4\,\area(z_1z_2z_3z_4)
	\]
	which is the area of a union of faces contained entirely within distance $M(Q)$ of $\bd R$ and thus may be bounded by $C_K M(Q)r$.
	
	The first term must be compared with $4\,\area(R)$.  In this case the symmetric difference $R \triangle R_Q$ is again contained entirely within distance $M(Q)$ of $\bd R$ and thus 
	\[
	|4\,\area(R) - 4\,\area(R_Q)| \le 4\,\area(R \triangle R_Q) \le C M(Q)r.
	\]  Combining these shows that
	\[
	\Big|\sum_{z \in R \cap Q^\bullet} [\Delta_Q(g|_{Q^*})](z) - \int_R\Delta g \; dx dy\Big| \le C_KM(Q)r
	\]
	as needed.
\end{proof}

We now modify this proof to obtain the proof for $\Re\; z^2$ (the proof for $\Im \;z^2$ is analogous).

\begin{proof}[Proof of Proposition~\ref{LapProp} for $\Re \;z^2$.]
	We wish to show that
	\[
	\sum_{w \in R \cap Q^\bullet} [\Delta_Q(\Re \; z^2)](w)
	\]
	is zero aside from terms contributed from the boundary.  Note that for $z''$ as defined in Lemma~\ref{OtherAreaLem} we have that given a face $f$, $\nabla_Q\Re(z-z'')^2 = 0$ on $f$.  We will make frequent use of the identity 
	\[
	\Re(z-a)^2 = \Re(z-b)^2 + 2\Re[(b-a)(z-b)] + \Re(b-a)^2
	\]
	to shift our quadratic to be based around other points.
	
	First note that for any $w \in Q^\bullet$ we have that
	\[
		[\Delta_Q(\Re \; z^2)](w) = [\Delta_Q(\Re(z-w)^2 + 2\Re[w(z-w)] + \Re \; w^2)](w) = [\Delta_Q(\Re(z-w)^2)](w) 
	\]
	since since the last two terms are discrete harmonic.  Now, by Lemma~\ref{LapExpressLem}, and expanding around the point $z''$ from Lemma~\ref{OtherAreaLem}
	\begin{align*}
		\hspace{2em} & \hspace{-2em} [\Delta_Q(\Re(z-w)^2)](w) \\
		& = \sum_{\substack{f=[z_1z_2z_3z_4]\\z_1 = w}} * \nabla_Q(\Re(z-w)^2)(f) \odot (z_4-z_2) \\
		& = \sum_{\substack{f=[z_1z_2z_3z_4]\\z_1 = w}} * \nabla_Q(\Re(z-z'')^2 + 2\Re[(z''-w)(z-z'')] + \Re(z''-w)^2)(f) \odot (z_4-z_2) \\
		& = \sum_{\substack{f=[z_1z_2z_3z_4]\\z_1 = w}} * \nabla_Q(2\Re[(z''-w)(z-z'')])(f) \odot (z_4-z_2) \\
		& = \sum_{\substack{f=[z_1z_2z_3z_4]\\z_1 = w}} 2 \; \Im[(z''-w)(z_4-z_2)] \\
		& = \sum_{\substack{f=[z_1z_2z_3z_4]\\z_1 = w}} 2 \; \Im[z''(z_4-z_2)]
	\end{align*}
	where the last equality holds since $w \in Q^\bullet \cap R$ implies that the faces $f$ in the sum form a full cycle around $w$.
	
	We now sum over all $w \in R \cap Q^\bullet$.  Note that the two terms corresponding to two black vertices of the same face are
	\[
	\Im[z''(z_4-z_2)] + \Im[z''(z_2-z_4)] =  0,
	\]
	and thus the sum becomes
	\[
	\sum_{w \in R \cap Q^\bullet} [\Delta_Q(\Re \; z^2)](w) = \sum_{\substack{f=[z_1z_2z_3z_4]\\z_1\in Q^\bullet\cap R\\ z_3 \not \in Q^\bullet\cap R}} 2 \; \Im[z''(z_4-z_2)].
	\]
	Since the edges $(z_4-z_2)$ in the above formula form a closed cycle, we may, by summation by parts, subtract the sum of $(z_2+z_4)(z_4-z_2)$ to obtain
	\begin{align*}
	\Big|\sum_{w \in R \cap Q^\bullet} [\Delta_Q(\Re \; z^2)](w)\Big| & = \Big|\sum_{\substack{f=[z_1z_2z_3z_4]\\z_1\in Q^\bullet\cap R\\ z_3 \not \in Q^\bullet\cap R}} 2 \; \Im[(2z''-z_2+z_4)(z_4-z_2)]\Big| \\
	& \le C_K \sum_{\substack{f=[z_1z_2z_3z_4]\\z_1\in Q^\bullet\cap R\\ z_3 \not \in Q^\bullet\cap R}} \area(z_1z_2z_3z_4) \\
	& \le C_K M(Q)r
	\end{align*}
	where the second to last line follows by Lemma~\ref{OtherAreaLem} and the last line follows since it is the area of a union of faces contained within distance $M(Q)$ of $\bd R$.
\end{proof}

Thus, all that remains is to show that if the function is $\tilde g(z)$ in the above expansion where $D^k\tilde g(0) = 0$ for $k = 0,1,2$.  This case is simpler than the two previous ones since the result will follow by showing that each of the terms on the left hand side of Proposition~\ref{LapProp} are themselves small enough.  To do so we must first estimate the difference between the discrete and continuous gradient.

\begin{lemma}\label{GradApproxLem}
	For any $K$-round quadrilateral $f=[z_1z_2z_3z_4]$ we have
	\[
	|\nabla g - \nabla_Q(g|_{Q^*})| \le C \diam(f) \max_{z \in \mathrm{conv}(f)} |D^2 g(z)|.
	\]
	Where $\mathrm{conv}(f)$ is the convex hull of $f$.
\end{lemma}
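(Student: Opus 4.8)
The plan is to exploit the orthogonality hypothesis, which makes the linear system defining the discrete gradient perfectly conditioned. Write $G \ceq \nabla_Q(g|_{Q^*})(f)$, $d_1 \ceq z_3-z_1$, $d_2 \ceq z_4-z_2$, and $\hat d_j \ceq d_j/|d_j|$ (the diagonals are nonzero since $f$ is a non-degenerate $K$-round quadrilateral). By definition
\[
G\odot d_1 = g(z_3)-g(z_1), \qquad G\odot d_2 = g(z_4)-g(z_2),
\]
and since $d_1\perp d_2$ the pair $\hat d_1,\hat d_2$ is an orthonormal basis of $\R^2$, so $|G-\nabla g(p)|^2 = ((G-\nabla g(p))\odot\hat d_1)^2 + ((G-\nabla g(p))\odot\hat d_2)^2$ for any base point $p$. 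Thus it suffices to bound the two dot products, and by the relations above each is a difference of second-order Taylor remainders.

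Concretely, let $m_1 \ceq (z_1+z_3)/2$ and $m_2 \ceq (z_2+z_4)/2$; these points, the segments joining them to the vertices $z_1,\dots,z_4$, and the segment $[m_1,m_2]$ all lie in $\mathrm{conv}(f)$, being convex combinations of the vertices. Expanding $g$ to second order about $m_1$ and subtracting the expansions at $z_1 = m_1-d_1/2$ and $z_3 = m_1+d_1/2$, the first-order parts combine to $\nabla g(m_1)\odot d_1$ while the remainders combine to an error of size at most $\tfrac14|d_1|^2\max_{z\in\mathrm{conv}(f)}|D^2g(z)|$; dividing by $|d_1|\le\diam(f)$,
\[
\bigl|(G-\nabla g(m_1))\odot\hat d_1\bigr| \le \tfrac14\,\diam(f)\max_{z\in\mathrm{conv}(f)}|D^2g(z)|.
\]
The identical computation about $m_2$ bounds $|(G-\nabla g(m_2))\odot\hat d_2|$ the same way, and the fundamental theorem of calculus along $[m_1,m_2]\subset\mathrm{conv}(f)$ gives $|\nabla g(m_2)-\nabla g(m_1)| \le |m_1-m_2|\max_{z\in\mathrm{conv}(f)}|D^2g(z)| \le \diam(f)\max_{z\in\mathrm{conv}(f)}|D^2g(z)|$, whence
\[
\bigl|(G-\nabla g(m_1))\odot\hat d_2\bigr| \le \tfrac54\,\diam(f)\max_{z\in\mathrm{conv}(f)}|D^2g(z)|.
\]
Feeding the two coordinate bounds into the orthogonal decomposition gives $|G-\nabla g(m_1)| \le \tfrac{\sqrt{26}}{4}\,\diam(f)\max_{z\in\mathrm{conv}(f)}|D^2g(z)|$; evaluating $\nabla g$ at any other point of $\mathrm{conv}(f)$ changes the constant only by an additive term, again by the fundamental theorem of calculus, which is the claimed estimate.

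There is no real obstacle here, but one point needs care: the expansion centers must be the diagonal midpoints. This makes each quadratic remainder scale like the square of the corresponding diagonal length, so after dividing by that length one is left with a single factor of a diagonal length, controlled harmlessly by $\diam(f)$; in particular only the orthogonality of the diagonals is used, not $K$-roundness, which is why the constant is universal. Had one instead expanded about a single vertex, one of the two remainders would be of size $\diam(f)^2\max_{z\in\mathrm{conv}(f)}|D^2g(z)|$ divided by a possibly very short diagonal, forcing one to absorb the resulting factor using the diagonal lower bound of Lemma~\ref{MoreUnifLem} and producing a constant depending on $K$.
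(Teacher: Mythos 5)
Your proof is correct and follows essentially the same route as the paper's: bound the discrepancy componentwise along the two orthogonal diagonal directions using one-dimensional calculus on the diagonals, transfer to a common base point with the fundamental theorem of calculus, and combine via orthogonality. The only difference is that you use a second-order Taylor expansion about the diagonal midpoints where the paper applies Rolle's theorem on each diagonal and then integrates $\nabla g$ back to $z_1$; both yield a constant independent of $K$, as you correctly observe.
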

\begin{proof}
	By Rolle's theorem, there is a point $z$ on the interval between $z_1$ and $z_3$ and another point $z'$ on the interval between $z_2$ and $z_4$ such that
	\[
	(\nabla g(z) - \nabla_Q g (f))\odot \frac{z_3 - z_1}{|z_3-z_1|} = 0 \ \text{and} \ (\nabla g(z') - \nabla_Q g (f))\odot \frac{z_4 - z_2}{|z_4-z_2|} = 0
	\]
	Thus, by integration, we see that
	\[
	|(\nabla g(z_1) - \nabla_Q g (f))\odot \frac{z_3 - z_1}{|z_3-z_1|}| \le C \diam(f) \max_{z \in \mathrm{conv}(f)} |D^2 g(z)| 
	\]
	and
	\[ |(\nabla g(z_1) - \nabla_Q g (f))\odot \frac{z_4 - z_2}{|z_4-z_2|}| \le C \diam(f) \max_{z \in \mathrm{conv}(f)} |D^2 g(z)|.
	\]
	Thus
	\[
	|(\nabla g(z_1) - \nabla_Q g (f))| \le C \diam(f) \max_{z \in \mathrm{conv}(f)} |D^2 g(z)|
	\]
	as desired by orthogonality.
\end{proof}

\begin{proof}[Proof of Proposition~\ref{LapProp} for $\tilde g$.]
	First note that, given the conditions on $\tilde g$, the following three estimates hold by integration:
	\[
	|\nabla\tilde g(z)| \le Cr^2\max_{z \in R} |D^3\tilde g(z)|, \quad |D^2\tilde g(z)| \le C r \max_{z \in R} |D^3\tilde g(z)|, \quad |\Delta \tilde g(z)| \le  C r \max_{z \in R} |D^3\tilde g(z)|.
	\]
	By integrating the third inequality over $R$ we immediately see that
	\[
	\Big| \int_R \Delta \tilde g \; dxdy \Big| \le Cr^3\max_{z \in R} |D^3\tilde g(z)|.
	\]
	
	We now produce the same bound for the discrete case.  First note that
	\begin{align*}
	\Big|\sum_{w \in R \cap Q^\bullet} [\Delta_Q(\tilde g(z))](w)\Big| & = \Big| \sum_{w \in R \cap Q^\bullet} \sum_{\substack{f = [z_1z_2z_3z_4] \\ z_1 = w}} * \nabla_Q \tilde g(f) \odot (z_2-z_4)\Big| \\ 
	& = \Big|  \sum_{\substack{f = [z_1z_2z_3z_4] \\ z_1 \in R \cap Q^\bullet \\ z_3 \not \in R \cap Q^\bullet}} * \nabla_Q \tilde g(f) \odot (z_2-z_4)\Big| \\ 
	& \le \sum_{\substack{f = [z_1z_2z_3z_4] \\ z_1 \in R \cap Q^\bullet\\ z_3 \not \in R \cap Q^\bullet}} (|\nabla_Q \tilde g(f) - \nabla \tilde g (z_1)| + |\nabla \tilde g(z_1)|) \cdot |z_4-z_1|
	\end{align*}
	By using Lemma~\ref{GradApproxLem} followed by the estimate on $|D^2 \tilde g(z)|$, and $|\nabla \tilde g(z)|$ we see that
	\[
	\Big|\sum_{w \in R \cap Q^\bullet} [\Delta_Q(\tilde g(z))](w)\Big| \le (C_K \diam(f)r + r^2) \max_{z \in R} |D^3\tilde g(z)|  \sum_{\substack{f = [z_1z_2z_3z_4] \\ z_1 \in R \cap Q^\bullet\\ z_3 \not \in R \cap Q^\bullet}} |z_4-z_1|.
	\] 
	We obtain the final estimate by an application of Proposition~\ref{DiamProp}, to see that
	\[
	\Big|\sum_{w \in R \cap Q^\bullet} [\Delta_Q(\tilde g(z))](w)\Big| \le C_Kr^3 \max_{z \in R} |D^3\tilde g(z)|.
	\]
\end{proof}

\section{Convergence results}

With all of these estimates in place, we may now establish the desired convergence results.  Again, many of these proofs closely mirror those in \cite{Skopenkov}.  Before establishing convergence of the functions, we must first establish the convergence of the discrete energy to the continuous one.

\begin{lemma}\label{EnergyConvLem}
	Let $\Omega$ be a bounded simply-connected domain with smooth boundary.  Let $\{Q_n\}$ be a sequence of $K$-round quadrilateral lattices approximating $\Omega$.  Then for any $C^2(\C)$ smooth function $g: \C \rightarrow \R$, $E_{Q_n}(g|_{Q_n^*}) \rightarrow E_\Omega(g)$ as $n \rightarrow \infty$.
\end{lemma}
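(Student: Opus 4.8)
The plan is to recognise $E_{Q_n}(g|_{Q_n^*})$ as a Riemann-type sum for $\int_\Omega|\nabla g|^2$ and to control the three sources of error separately: replacing the discrete gradient on each face by a pointwise value of $\nabla g$, passing from the resulting step function to $|\nabla g|^2$ itself, and replacing the region tiled by $Q_n$ by $\Omega$. I would begin by fixing a compact neighbourhood $N$ of $\overline\Omega$; by the approximation hypothesis (Definition~\ref{ApproxDef}) the region tiled by $Q_n$ is contained in $N$ for all large $n$, and on $N$ the quantities $\|\nabla g\|_\infty$ and $\|D^2 g\|_\infty$ are finite and $z\mapsto|\nabla g(z)|^2$ is uniformly continuous, say with modulus $\omega$. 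I will also use repeatedly that every face $f$ of $Q_n$ satisfies $\diam(f)\le 2M(Q_n)$, since any two of its vertices are joined by at most two edges.

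First, choose a vertex $z_f$ in each face $f$ of $Q_n$. Lemma~\ref{GradApproxLem} gives $|\nabla_Q g(f)-\nabla g(z_f)|\le C\diam(f)\|D^2g\|_{\infty,N}$, and since $\nabla g$ is Lipschitz on $N$ this also bounds $|\nabla_Q g(f)|$ by $\|\nabla g\|_{\infty,N}+o(1)$. Writing $|\nabla_Q g(f)|^2-|\nabla g(z_f)|^2$ as a product of a difference and a sum, one gets a per-face bound of the form $C(g,N)\diam(f)$ for $n$ large; multiplying by $\area(f)$, summing over faces, and using $\sum_f\area(f)=\area(\overline{Q_n})\le\area(N)$ together with $\diam(f)\le 2M(Q_n)$ shows
\[
\Bigl|E_{Q_n}(g|_{Q_n^*})-\sum_{f\in Q_n}|\nabla g(z_f)|^2\area(f)\Bigr|\le C(g,N)\,\area(N)\cdot 2M(Q_n)\longrightarrow 0.
\]
Next, since the faces tile $\overline{Q_n}$ with disjoint interiors, $\sum_f|\nabla g(z_f)|^2\area(f)=\int_{\overline{Q_n}}\phi_n$, where $\phi_n$ equals $|\nabla g(z_f)|^2$ on the interior of $f$; as $|z-z_f|\le\diam(f)\le 2M(Q_n)$ for $z\in f$, we have $\|\phi_n-|\nabla g|^2\|_{L^\infty(\overline{Q_n})}\le\omega(2M(Q_n))$, and hence $\int_{\overline{Q_n}}\phi_n$ differs from $\int_{\overline{Q_n}}|\nabla g|^2$ by at most $\omega(2M(Q_n))\,\area(N)\to 0$.

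It then remains to compare $\int_{\overline{Q_n}}|\nabla g|^2$ with $\int_\Omega|\nabla g|^2$, for which it suffices (as $|\nabla g|^2$ is bounded on $N$) to show $\area(\overline{Q_n}\triangle\Omega)\to 0$. Setting $\epsilon_n\ceq \mathrm{d}_{\mathrm{Haus}}(\bd Q_n,\bd\Omega)\to 0$, the boundary $\bd Q_n$ lies in the $\epsilon_n$-tube about $\bd\Omega$, which has area $O(\epsilon_n)$ because $\bd\Omega$ is smooth; a connectivity argument (using that $\overline{Q_n}$ is connected for large $n$ and that $\Omega$ is simply connected) then shows that every point at distance more than $\epsilon_n+2M(Q_n)$ from $\bd\Omega$ lies in $\overline{Q_n}$ exactly when it lies in $\Omega$, so $\overline{Q_n}\triangle\Omega$ is contained in a tube of area $O(\epsilon_n+M(Q_n))$, and the claim follows. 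I expect this last step --- the purely geometric verification that the tiled region genuinely exhausts $\Omega$ without large holes or spurious pieces --- to be the only delicate point; Steps~1 and~2 are routine once Lemma~\ref{GradApproxLem} is in hand, whereas Step~3 is where smoothness of $\bd\Omega$, simple-connectivity of $\Omega$, and $M(Q_n)\to 0$ are all genuinely needed.
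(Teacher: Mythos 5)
Your proposal is correct and follows essentially the same route as the paper: compare the discrete energy to the continuous energy over the tiled region $\hat Q_n$ via Lemma~\ref{GradApproxLem}, then control $E_\Omega(g)-E_{\hat Q_n}(g)$ by the area of the symmetric difference, which the Hausdorff convergence and smoothness of $\bd\Omega$ force to vanish. The paper compresses your Steps~1--2 into the single line ``Lemma~\ref{GradApproxLem} now completes the proof,'' so your write-up simply makes explicit the Riemann-sum bookkeeping that the paper leaves implicit.
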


\begin{proof}
	Let $\hat Q_n$ denote the region obtained by taking the union of all faces of $Q_n$ and all interior edges.  By Definition~\ref{ApproxDef}, since the $Q_n$ approximate $\Omega$, we have that $\bd Q_n$ is contained within the $\epsilon_n$-neighborhood of $\bd\Omega$ and $\bd\Omega$ is contained within the $\epsilon_n$-neighborhood of $\bd Q_n$ for some $\epsilon_n \rightarrow 0$ as $n \rightarrow 0$.  This implies, when combined with the smoothness of $\bd \Omega$, that $\area(\Omega \setminus \hat Q_n),\area(\hat Q_n \setminus \Omega) \rightarrow 0$ as $n \rightarrow \infty$ as both differences are contained within the $\epsilon_n$-neighborhood of $\Omega$.  Moreover, there exists a compact convex neighborhood $\Omega'$ of $\Omega$ which contains all $\hat Q_n$.  Since $g$ is $C^2(\C)$, we know that $\nabla g$ is bounded on $\Omega'$ and thus the integrals defining $E_\Omega(g),E_{\hat Q_n}(g)$ exist and moreover $E_\Omega(g) - E_{\hat Q_n}(g) = E_{\Omega\setminus \hat Q_n}(g) - E_{\hat Q_n\setminus \Omega}(g) \rightarrow 0$ as $n \rightarrow 0$.  Lemma~\ref{GradApproxLem} now completes the proof.
\end{proof}

We now show that, if we are given a collection discrete harmonic functions which do converge, then the limit must itself be harmonic.

\begin{proposition}\label{HarmConvProp}
Let $\{Q_n\}$ be a sequence of $K$-round orthogonal quadrilateral lattices approximating a domain $\Omega$.  Let $u_n:Q_n^* \rightarrow \R$ be a sequence of discrete harmonic functions such that $u_n$ converges uniformly to a continuous $u:\Omega \rightarrow \R$.  Then the function $u$ is harmonic.
\end{proposition}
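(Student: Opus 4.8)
\emph{Plan.} The strategy is the classical one: test against a smooth compactly supported function, transfer the discrete Laplacian off $u_n$ onto the test function by a (discrete) integration by parts, use discrete harmonicity to annihilate the resulting sum, and finally identify the limit of what remains with $\int_\Omega u\,\Delta\phi\;dxdy$ using the Laplacian approximation of Proposition~\ref{LapProp}. Weyl's lemma then finishes. So fix $\phi\in C_c^\infty(\Omega)$ and extend it by $0$ to all of $\C$; it is then $C^3$ with compact support contained in $\Omega$. It suffices to prove $\int_\Omega u\,\Delta\phi\;dxdy=0$: by Weyl's lemma this forces $u$ to agree almost everywhere with a harmonic (hence smooth) function, and, being continuous, $u$ is then itself harmonic.

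\textbf{A discrete identity.} Since $\mathrm{supp}\,\phi$ is a compact subset of the open set $\Omega$ and $\{Q_n\}$ approximates $\Omega$, for all large $n$ the region $\hat Q_n$ contains a fixed neighborhood of $\mathrm{supp}\,\phi$, every face of $Q_n$ meeting $\mathrm{supp}\,\phi$ lies together with its vertices in $\Omega$, and $\phi$ vanishes identically on $\bd Q_n$. The pairing $(v,w)\mapsto\sum_{z\in Q_n^\bullet}v(z)\,\Delta_{Q_n}w(z)$ is symmetric — this is immediate from the face-by-face definition of $\Delta_{Q_n}$ (cf.\ Lemma~\ref{LapExpressLem}) and is a form of the discrete Green identity (Lemma~\ref{GreenLem}). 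Hence
\[
\sum_{z\in Q_n^\bullet} u_n(z)\,\Delta_{Q_n}(\phi|_{Q_n^*})(z)\;=\;\sum_{z\in Q_n^\bullet}\phi(z)\,\Delta_{Q_n}u_n(z).
\]
Because $u_n$ is discrete harmonic, $\Delta_{Q_n}u_n(z)=0$ at every vertex not on $\bd Q_n$, while $\phi(z)=0$ at every vertex on $\bd Q_n$; thus the right-hand side is $0$, and therefore $\sum_{z\in Q_n^\bullet} u_n(z)\,\Delta_{Q_n}(\phi|_{Q_n^*})(z)=0$ for all large $n$.

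\textbf{Riemann-sum approximation and the limit.} Fix a mesh $\rho>0$ and let $\Omega''$ be the union of all closed squares of side $\rho$ from the grid $\rho\Z^2$ that meet a fixed neighborhood of $\mathrm{supp}\,\phi$; for $\rho$ small, $\mathrm{supp}\,\phi$ lies in the interior of $\Omega''\Subset\Omega$. Enumerate these squares $R_1,\dots,R_N$ (so $N\rho^2\le\area(\Omega'')$) with centers $\zeta_1,\dots,\zeta_N$. For $n$ large enough that $M(Q_n)<\rho$, Proposition~\ref{LapProp} on each $R_i$ gives $\sum_{z\in R_i\cap Q_n^\bullet}\Delta_{Q_n}(\phi|_{Q_n^*})(z)=\int_{R_i}\Delta\phi\;dxdy+\eta_{i,n}$ with $|\eta_{i,n}|\le C_K(M(Q_n)\rho\|D^2\phi\|_\infty+\rho^3\|D^3\phi\|_\infty)$, so $\sum_i|\eta_{i,n}|\le C_{K,\phi}(M(Q_n)/\rho+\rho)$. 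Writing $u_n(z)=u(\zeta_i)+(u_n(z)-u(\zeta_i))$ on each $R_i$ and summing, the vanishing sum from the previous step becomes
\[
0=\sum_{i=1}^N u(\zeta_i)\int_{R_i}\Delta\phi\;dxdy\;+\;E_n^{(1)}\;+\;E_n^{(2)},
\]
where $|E_n^{(1)}|\le\|u\|_\infty\,C_{K,\phi}(M(Q_n)/\rho+\rho)$ and $|E_n^{(2)}|\le(\sup_n\|u_n-u\|_\infty+\omega_u(\rho\sqrt2))\sum_{z\in Q_n^\bullet}|\Delta_{Q_n}(\phi|_{Q_n^*})(z)|$, with $\omega_u$ the modulus of continuity of $u$ on $\Omega''$. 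The crucial uniform bound is $\sum_{z\in Q_n^\bullet}|\Delta_{Q_n}(\phi|_{Q_n^*})(z)|\le C_\phi$ independently of $n$: by Lemma~\ref{LapExpressLem} this is at most $2\sum_f|\nabla_{Q_n}\phi(f)|\,|z_2-z_4|$ over faces meeting $\mathrm{supp}\,\phi$, where $|\nabla_{Q_n}\phi(f)|$ is uniformly bounded once $M(Q_n)\le1$ (Lemma~\ref{GradApproxLem}) and $\sum_f|z_2-z_4|\,\diam(f)\le\sum_f\diam(f)^2\le\area(\Omega'')/C_K$ (Lemma~\ref{AreaLem}). Now let $n\to\infty$ with $\rho$ fixed ($M(Q_n)\to0$, $\|u_n-u\|_\infty\to0$), then let $\rho\to0$: the remaining Riemann-sum gap $|\int_\Omega u\,\Delta\phi\;dxdy-\sum_i u(\zeta_i)\int_{R_i}\Delta\phi\;dxdy|\le\omega_u(\rho\sqrt2)\|\Delta\phi\|_{L^1}$ (using $\mathrm{supp}\,\Delta\phi\subset\Omega''$) and the surviving error terms $C_{K,\phi}\rho$ and $C_\phi\omega_u(\rho\sqrt2)$ all tend to $0$. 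Hence $\int_\Omega u\,\Delta\phi\;dxdy=0$, and $u$ is harmonic.

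\textbf{Main obstacle.} The only genuinely delicate point is the two-parameter limit: the geometric error from Proposition~\ref{LapProp}, the oscillation error from replacing $u_n$ by the constant $u(\zeta_i)$ on $R_i$, and the Riemann-sum discrepancy must all be controlled simultaneously, which is precisely why $\rho$ must be sent to $0$ only after $n$. This forces the uniform-in-$n$ estimate $\sum_{z\in Q_n^\bullet}|\Delta_{Q_n}(\phi|_{Q_n^*})(z)|\le C_\phi$, and it is there — via Lemmas~\ref{GradApproxLem} and \ref{AreaLem} — that $K$-roundness is used; everything else is bookkeeping together with the transfer of the Laplacian in the discrete Green identity.
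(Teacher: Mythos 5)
Your overall route is the paper's: reduce to Weyl's lemma, kill $\sum_{z} u_n\Delta_{Q_n}\phi$ by the discrete Green identity together with discrete harmonicity and the vanishing of $\phi$ near $\bd Q_n$, and then convert $\sum_{z} u_n\Delta_{Q_n}\phi$ into $\int_\Omega u\,\Delta\phi\,dxdy$ by tiling the support with mesoscopic squares and applying Proposition~\ref{LapProp} on each. The only structural difference is that you keep the square mesh $\rho$ fixed and send $n\to\infty$ first, whereas the paper couples the mesh to the lattice via squares of side $\sqrt{2M(Q_n)}$ and takes a single limit; both bookkeepings are legitimate.

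There is, however, a genuine gap at exactly the step you yourself single out as crucial. You bound $\sum_{z\in Q_n^\bullet}|\Delta_{Q_n}(\phi|_{Q_n^*})(z)|$ by $2\sum_f|\nabla_{Q_n}\phi(f)|\,|z_2-z_4|$ and then invoke (a) $|\nabla_{Q_n}\phi(f)|\le C$ and (b) $\sum_f|z_2-z_4|\,\diam(f)\le\sum_f\diam(f)^2\le C$. These do not combine to control the sum you actually have: with only (a) you are left with $C\sum_f|z_2-z_4|$, and for faces of diameter comparable to $M(Q_n)$ covering a fixed region this grows like $\area(\Omega'')/M(Q_n)\to\infty$; the inequality in (b) is true but bounds a different, smaller quantity. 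Since your $E_n^{(2)}$ term is controlled only through this uniform bound, the argument as written does not close. The bound $\sum_z|\Delta_{Q_n}\phi(z)|\le C_\phi$ is nevertheless correct, and the missing ingredient is the cancellation the paper exploits inside the proof of Proposition~\ref{LapProp}: for an interior black vertex $z$ the diagonals $(z_2-z_4)$ of the faces incident to $z$ traverse a closed cycle, so $\sum_{f\ni z}(z_2-z_4)=0$, and Lemma~\ref{LapExpressLem} gives $\Delta_{Q_n}\phi(z)=\sum_{f\ni z}\bigl(i(\nabla_{Q_n}\phi(f)-\nabla\phi(z))\bigr)\odot(z_2-z_4)$. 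Lemma~\ref{GradApproxLem} then yields $|\nabla_{Q_n}\phi(f)-\nabla\phi(z)|\le C\diam(f)\max|D^2\phi|$, hence $|\Delta_{Q_n}\phi(z)|\le C\max|D^2\phi|\sum_{f\ni z}\diam(f)^2$, and summing over $z$ (each face is counted by at most its two black corners) Lemma~\ref{AreaLem} gives $\sum_z|\Delta_{Q_n}\phi(z)|\le C_K\max|D^2\phi|\,\area(\Omega'')$. With that substitution your proof goes through.
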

\begin{proof}
	By Weyl's lemma (see, for example, \cite[p. 235]{Stein}), it will suffice to show that
	\[
	\int_{\Omega} u \nabla v \; dx dy = 0
	\]
	for any $C^\infty$ function $v$ whose support $S$ is compactly contained in $\Omega$. 
	
	First, note by Green's identity (Lemma~\ref{GreenLem}) that, as long as $n$ is taken to be sufficiently large that $v|_{Q^*}$ vanishes on $\bd Q_n$ we have immediately that
	\[
	\sum_{z \in Q^\bullet_n} [u_n\nabla_{Q_n}v](z) = \sum_{z \in Q^\bullet_n} [v\nabla_{Q_n}u_n](z) = 0.
	\]
	This tells us that the discrete version of Weyl's lemma holds for the discrete harmonic functions.  We need only bound the error between the discrete sum and the integral to conclude that Weyl's lemma holds for the limit function.
	
	To do so, we will include an intermediate lattice $\sqrt{2M(Q_n)}\Z^2$ at a scale which is large compared to the lattice spacing on $Q_n$ but still small compared to $\Omega$ (the exact choice of $\sqrt{2M(Q_n)}$ is taken simply for convenience).  We define a function on the rectangular faces $R$ of this intermediate lattice by $\tilde u_n(R) = \max_{z \in R \cap S} u(z)$.  By continuity, we know that $\tilde u_n$, extended to a function on $\C$ by taking it constant on faces, converges uniformly to $u$ on the support of $v$.  Thus, we have, by the uniform converges of both $u_n$ and $\tilde u_n$ to $u$, that
	\[
	\Big|\int_\Omega u\Delta v \; dxdy - \sum_{z \in Q_n^\bullet} [u_n\Delta]\Big| \le \sum_{R \ : \ R \cap S \neq \emptyset} (|\tilde u_n(R)|+o(1)) \cdot \Big|\int_R \Delta v \; dxdy - \sum_{z \in R \cap Q^\bullet_n}[\Delta_{Q_n}v](z)\Big|.
	\]
	
	By Proposition~\ref{LapProp}, we see that 
	\begin{align*}
	\hspace{3em} & \hspace{-3em} \sum_{R \ : \ R \cap S \neq \emptyset} (|\tilde u_n(R)|+o(1)) \cdot \Big|\int_R \Delta v \; dxdy - \sum_{z \in R \cap Q^\bullet_n}[\Delta_{Q_n}v](z)\Big| \\
	& \le \sum_{R \ : \ R \cap S \neq \emptyset} (|\tilde u_n(R)|+o(1)) \cdot C_K M(Q_n)^{3/2} \Big(\max_{z \in R}|D^2v(z)|+\max_{z \in R}|D^3v(z)|\Big) \\
	& \le C_K \cdot \area(S) \cdot \Big(\max_{z \in S} u(z)\Big) \cdot \Big(\max_{z \in S}|D^2v(z)|+\max_{z \in S}|D^3v(z)|\Big) \cdot M(Q_n)^{1/2} \\
	& = O(M(Q_n)^{1/2}) \rightarrow 0,
	\end{align*}
	where the second to last line follows by estimating the number of squares of side length $M(Q)^{1/2}$ needed to cover $S$.
\end{proof}

We may now use these results to establish the final limit theorem.

\begin{theorem}\label{FullTheorem}
	Let $\{Q_n\}_{n \in \N}$ be a sequence of $K$-round quadrilateral lattices approximating a bounded simply connected domain $\Omega \in \C$ and $g:\C \rightarrow \R$ be a given smooth boundary value.  Then, the sequence of solutions $u_{n}$ to the discrete Dirichlet problem on $Q_n$ with boundary values $g |_{\bd Q_n}$ uniformly converges to the solution $u$ to the Dirichlet problem on $\Omega$ with boundary values $g|_{\bd \Omega}$.
\end{theorem}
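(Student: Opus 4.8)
The plan is to run the classical variational compactness argument: bound the discrete energies uniformly using minimality, extract from Proposition~\ref{EquiProp} a common modulus of continuity, pass to a uniformly convergent subsequence via Arzel\`{a}--Ascoli, identify the limit as harmonic through Proposition~\ref{HarmConvProp}, check that it attains the boundary values $g$, and conclude by uniqueness of the continuous Dirichlet problem that the whole sequence converges.

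First I would record two a priori bounds. Since $u_n$ minimizes $E_{Q_n}$ among functions on $Q_n^*$ with boundary values $g|_{\bd Q_n}$, and $g|_{Q_n^*}$ is an admissible competitor, $E_{Q_n}(u_n) \le E_{Q_n}(g|_{Q_n^*})$; by Lemma~\ref{EnergyConvLem} the right-hand side converges to $E_\Omega(g) < \infty$, so $\sup_n E_{Q_n}(u_n) \ceq M_E < \infty$. Independently, the maximum principle (Lemma~\ref{MaxLem}) gives $\|u_n\|_\infty \le \max_{\bd Q_n}|g|$, which is uniformly bounded because all $\bd Q_n$ eventually lie in a fixed bounded neighbourhood of $\bd\Omega$ on which $g$ is bounded.

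The heart of the matter is a uniform modulus of continuity, and this is precisely where $K$-roundness substitutes for the global density hypothesis of \cite{Skopenkov}. Fix black vertices $z,w$ of $Q_n$ and apply Proposition~\ref{EquiProp} with a fixed $R \ge |z-w| \vee M(Q_n)$:
\[
|u_n(z)-u_n(w)| \le C_K M_E^{1/2}\,\log^{-1/2}\!\Bigl[\tfrac{R}{|z-w|\vee M(Q_n)}\Bigr] + \max_{z',w'\in\bd Q_n\cap B_R}|u_n(z')-u_n(w')|.
\]
Because $M_E$ is independent of $n$, the first term tends to $0$ as $|z-w| \to 0$ uniformly in $n$ (using $M(Q_n) \to 0$). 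For the second term, $u_n$ equals $g$ on $\bd Q_n$ and $g$ is uniformly continuous, so that term is controlled by the modulus of continuity of $g$ at scale $\diam(B_R) = 2R$; choosing $R$ small first and then $n$ large makes both terms uniformly small. Hence $\{u_n\}$ (with white vertices handled by the analogous estimate) has a common modulus of continuity valid both in the interior and up to $\bd\Omega$; taking $w$ a boundary vertex near a given $z$ moreover shows $u_n(z)$ is close to $g$ evaluated at the nearest point of $\bd\Omega$.

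With these bounds I would extend each $u_n$ to a uniformly bounded, uniformly equicontinuous function on $\overline\Omega$ using $\mathrm{d}_{\mathrm{Haus}}(\bd Q_n,\bd\Omega)\to 0$, and invoke Arzel\`{a}--Ascoli: every subsequence has a further subsequence converging uniformly on $\overline\Omega$ to some continuous $u$. Proposition~\ref{HarmConvProp} shows $u$ is harmonic in $\Omega$, and the boundary equicontinuity together with $u_n|_{\bd Q_n}=g|_{\bd Q_n}$ and $\bd Q_n\to\bd\Omega$ forces $u=g$ on $\bd\Omega$. Thus $u=u_{\Omega,g}$, the unique solution of the continuous Dirichlet problem; since the subsequential limit is always this same function, $u_n \to u_{\Omega,g}$ uniformly. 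I expect the boundary analysis to be the main obstacle: controlling the oscillation term in Proposition~\ref{EquiProp} near $\bd\Omega$ and verifying that the limit genuinely attains the prescribed data requires combining $\bd Q_n \to \bd\Omega$ with the smoothness of $g$ uniformly. A secondary point is that Lemma~\ref{EnergyConvLem} and the Laplacian estimates were established for smooth $\bd\Omega$, so to reach an arbitrary bounded simply connected $\Omega$ one must first approximate $\Omega$ by smooth domains from inside and outside and bound the discrepancy via the maximum principle.
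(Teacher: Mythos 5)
Your proposal is correct and follows essentially the same route as the paper: uniform boundedness from the maximum principle, a uniform energy bound from minimality plus Lemma~\ref{EnergyConvLem}, equicontinuity from Proposition~\ref{EquiProp} with $R$ chosen small yet large relative to $|z-w|\vee M(Q_n)$, then Arzel\`a--Ascoli, Proposition~\ref{HarmConvProp}, boundary identification, and uniqueness. The paper makes the same parameter choice concrete by taking $R=(\diam(V)(|z-w|\vee M(Q_n)))^{1/2}$ and disposes of the finitely many $n$ with $M(Q_n)$ large by a minimal-vertex-separation argument, details you gesture at but do not spell out.
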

\begin{proof}
	First note that because the $Q_n$ approximate the bounded domain $\Omega$, all the lattices $Q_n$ are contained in some large ball $V$.  Thus, by the maximum principle (Lemma~\ref{MaxLem}), we know that the $|u_n|$ are uniformly bounded by $\max_{w \in V} |g(v)| < \infty$.
	
	We now show that the family of functions $\{u_n\}_{n \in \N}$ are equicontinuous, which is to say that there exists some positive function $\delta(\epsilon)$ such that for every $n$ we have that for every $z,w \in Q_n^\bullet$, we have that $|z-w| < \delta(\epsilon)$ implies that $|u_n(z) - u_n(w)| < \epsilon$.   We do so by invoking Proposition~\ref{EquiProp}.  First, suppose we are in the case that $M(Q_n) < |z-w|$. When $R = (\diam(V)|z-w|)^{1/2}$ we have that
	\begin{align*}
		|u(z)-u(w)| & \le C_K E_{Q_R}(u)^{1/2}\log^{-1/2}\Big[\frac{R}{|z-w|}\Big] + \max_{z',w' \in \bd Q \cap B_R((z+w)/2)} |u(z')-u(w')| \\
		& \hspace{-6.1em} \le C_K (E_{V}(g)+o(1))^{1/2}\log^{-1/2}[\diam(V)^{1/2}|z-w|^{-1/2}] + (\diam(V)|z-w|)^{1/2}\cdot\max_{z'\in V}|D^1g(z')| \\
	\end{align*}
which is bounded uniformly in $n$ in terms of $|z-w|$ and tends to zero as $|z-w|$ tends to zero.  Note that we have used Lemma~\ref{EnergyConvLem} to conclude that the energy is bounded by its continuous counterpart.  If instead $|z-w| < M(Q_n)$, then by setting $R = (\diam(V)M(Q_n))^{1/2}$ we obtain the same bound with $M(Q_n)$ replacing $|z-w|$.  Note that selecting $|z-w|< \delta_0$ we may make $M(Q_n)$ as small as we wish since for any $\epsilon_0$ there are only finitely many $n$ for which $M(Q_n) > \epsilon_0$ and thus we may take $\delta_0 = \frac{1}{2}\min_{z,w,n \ : \ z,w \in Q_n^*} |z-w|$.  These two estimates prove the desired equicontinuity.

Now, by Arzel\`a-Ascoli, we know that there exists a subsequence of the $u_n|_{Q^\bullet_n}$ converges uniformly to a $u$ continuous on the closure of $\Omega$.  By Proposition~\ref{HarmConvProp}, we know that the limit function is harmonic in $\Omega$.  Moreover, by Definition~\ref{ApproxDef}, we know that for any $z \in \bd \Omega$, there exists a sequence of points $z_n \in \bd Q_n \cap Q_n^\bullet$ such that $z_n \rightarrow z$ as $n \rightarrow \infty$, and thus we know that $u|_{\bd \Omega} = g|_{\bd \Omega}$ and hence that $u$ solves the Dirichlet problem on $\Omega$ with boundary values $g|_{\bd \Omega}$.  Since this limit is unique, we know that had we passed to any arbitrary subsequence before applying Arzel\`a-Ascoli, we would have obtained the same limit, and thus we know that the entire sequence $u_n|_{Q^\bullet_n}$ converges uniformly to $u$ as desired.
\end{proof}

\bibliographystyle{plain}
\bibliography{Bib}

\end{document}